\setlist[enumerate]{label={\rm(\roman*)},leftmargin=6ex}
\newcommand{\R}{\mathbb{R}}
\newcommand{\rn}{{\mathbb{R}^n}}
\newcommand{\N}{\mathbb{N}}
\newcommand{\Z}{\mathbb{Z}}
\newcommand{\MM}{\mathcal{M}}
\newcommand{\RR}{\mathcal{R}}
\newcommand{\HH}{\mathcal{H}}
\newcommand{\TT}{\mathcal{T}}
\newcommand{\med}{\operatorname{med}}
\newcommand{\mv}{\operatorname{mv}}
\newcommand{\m}{\operatorname{m}}
\newcommand{\sgn}{\operatorname{sgn}}
\newcommand{\esssup}{\operatornamewithlimits{ess\,sup}}
\newcommand{\lgn}{\mathcal L}
\newcommand{\Mpl}{\MM_+}
\newcommand{\half}{{\frac12}}
\newcommand{\thalf}{{\tfrac12}}
\newcommand{\ib}{\frac{1}{\beta}}
\newcommand\ip{\frac1p}
\newcommand\iq{\frac1q}
\newcommand\ir{\frac1r}
\newcommand{\phaxi}{\Phi(|x_1|)}
\newcommand{\RG}{(\rn,\gamma_n)}
\newcommand{\Hg}{\HH^{n-1}_{\gamma_n}}
\renewcommand{\d}{{\mathrm d}}
\newcommand{\dgn}{\d\gamma_n}
\newcommand{\dHg}{\d\Hg}
\newcommand{\weight}{\Theta}
\newcommand{\Wlgn}{W_{\kern-1.2pt\lgn}\kern0.7pt}
\newcommand{\sou}{S}
\newcommand{\xs}{X_{\lgn}}
\newcommand{\xsd}{X_{\lgn}'}
\newcommand{\ys}{Y^{\lgn}}
\newcommand{\tvp}{\overline{\varphi}}
\newcommand{\tps}{\overline{\psi}}
\let\bar\overline
\let\tilde\widetilde
\DeclareRobustCommand\onedot{\futurelet\@let@token\@onedot}
\def\@onedot{\ifx\@let@token.\else.\null\fi\xspace}
\newcommand{\eg}{e.g\onedot} 
\newcommand{\ie}{i.e\onedot} 
\newcommand{\cf}{cf\onedot} 
\renewcommand{\ae}{a.e\onedot} 
\newtheoremstyle{MyPlain}{}{}{\itshape}{}{\bfseries}{.}{5pt plus 4pt minus 3pt}{\thmname{#1}\thmnumber{ #2}\thmnote{ \textbf{[#3]}}}
\theoremstyle{MyPlain}
\newtheorem{theorem}{Theorem}[section]
\newtheorem{lemma}[theorem]{Lemma}
\newtheorem{proposition}[theorem]{Proposition}
\newtheoremstyle{MyRemark}{}{}{\upshape}{}{\bfseries}{.}{5pt plus 1pt minus 1pt}{}
\theoremstyle{MyRemark}
\newtheorem{example}[theorem]{Example}
\numberwithin{equation}{section}
\let\expandafter\oldproof\csname\string\proof\endcsname
\let\oldendproof\endproof
\renewenvironment{proof}[1][\proofname]{%
  \oldproof[{{\bf #1.}}]%
}{\oldendproof}
\newcommand{\onorm}{\@ifstar\@onorms\@onorm}
\newcommand{\@onorms}[1]{%
	\left|\mkern-1.5mu\left|\mkern-1.5mu\left|
	#1
	\right|\mkern-1.5mu\right|\mkern-1.5mu\right|
}
\newcommand{\@onorm}[2][]{%
  \mathopen{#1|\mkern-1.5mu#1|\mkern-1.5mu#1|}
  #2
  \mathclose{#1|\mkern-1.5mu#1|\mkern-1.5mu#1|}
}
\def\paragraph{\bigskip\@startsection{paragraph}{4}%
  \z@\z@{-\fontdimen2\font}%
  {\normalfont\bfseries}}
\begin{document}

\title{Optimal Sobolev embeddings\\ for the Ornstein-Uhlenbeck operator}

\begin{abstract}
A comprehensive analysis of Sobolev-type inequalities for the Ornstein-Uhlenbeck operator in the Gauss space is offered.
A unified approach is proposed, providing one with criteria for their validity in the class of rearrangement-invariant function norms.
Optimal target and domain norms in the relevant inequalities are characterized via a reduction principle to one-dimensional inequalities for a Calder\'on type integral operator patterned on the Gaussian isoperimetric function.
Consequently, the best possible norms in a variety of specific families of spaces, including Lebesgue, Lorentz, Lorentz-Zygmund, Orlicz and Marcinkiewicz spaces, are detected.
The reduction principle hinges on a preliminary  discussion of the existence and uniqueness of generalized solutions to equations, in the Gauss space, for the Ornstein-Uhlenbeck operator, with a just integrable right-hand side.
A~decisive role is also played by a pointwise estimate, in rearrangement form, for these solutions.
\end{abstract}

\author{Andrea Cianchi\textsuperscript{1}}
\address{\textsuperscript{1}Dipartimento di Matematica e Informatica ``Ulisse Dini'',
University of Florence,
Viale Morgagni 67/A, 50134
Firenze,
Italy}
\email{andrea.cianchi@unifi.it}
\urladdr{0000-0002-1198-8718}

\author{V\'\i t Musil\textsuperscript{2}}
\address{\textsuperscript{2} Department of Computer Science, Faculty of Informatics, Masaryk University,
Botanick\'a 554/68a, 602~00, Brno,
Czech Republic}
\email{musil@fi.muni.cz}
\urladdr{0000-0001-6083-227X}

\author{Lubo\v s Pick\textsuperscript{3}}
\address{\textsuperscript{3}Department of Mathematical Analysis,
Faculty of Mathematics and Physics,
Charles University,
So\-ko\-lo\-vsk\'a~83,
186~75 Praha~8,
Czech Republic}
\email{pick@karlin.mff.cuni.cz}
\urladdr{0000-0002-3584-1454}

\date{\today}

\subjclass[2000]{46E35, 28C20}
\keywords{}

\maketitle

\section*{How to cite this paper}
\noindent
This paper has been accepted for publication in \emph{Journal of Differential Equations} and is available on
\begin{center}
	\url{https://doi.org/10.1016/j.jde.2023.02.035}.
\end{center}
Should you wish to cite this paper, the authors would like to cordially ask you
to cite it appropriately.

\section{Introduction}

The present paper deals with norm estimates for functions in the Gauss space in terms of the Ornstein-Uhlenbeck operator.
Norms depending on global integrability properties of functions, namely rearrangement-invariant function norms, are considered.

Specifically, we are concerned with inequalities of Sobolev type of the form
\begin{equation} \label{aug100}
	\|u - \m(u)\|_{Y\RG}
		\le c \|\lgn u\|_{X\RG}
\end{equation}
for some constant $c$ and for all functions $u\colon\rn\to\R$ such that $\lgn u\in X\RG$.
Here, $\RG$ denotes the Gauss space, namely the space $\rn$ equipped with the Gauss measure
$\gamma_n$ obeying
\begin{equation} \label{gaussmeas}
	\dgn(x) = \tfrac{1}{(2\pi)^{\frac{n}{2}}} e^{-\frac{|x|^2}{2}} \,\d x
		\quad\text{for $x\in\rn$,}
\end{equation}
and $\lgn$ stands for the Ornstein-Uhlenbeck operator, formally defined as
\begin{equation} \label{E:lgn-def}
	\lgn u = \Delta u - x \cdot \nabla u,
\end{equation}
where $\Delta$ and $\nabla$ are the classical Laplace and gradient operator.
Moreover, $X\RG$ and $Y\RG$ are rearrangement-invariant spaces, and $\m(u)$ stands for either the mean value or the median of $u$ over $\RG$.

Being  the infinitesimal generator of the Ornstein-Uhlenbeck semigroup in the Gauss space is one of the reasons that makes the  operator $\lgn$ of primary importance in the Gaussian setting.
It, therefore, plays a role parallel to that of the Laplace operator---the infinitesimal generator of the heat kernel---in the Euclidean space.
The Ornstein-Uhlenbeck operator enters various fields and has hence been extensively investigated in the literature.
For an introduction to its theory, we refer to the monograph by~\citet{Urb:19}, the lecture notes by~\citet{Lun:15}, and the survey papers by~\citet{Sjo:97} and \citet{Bog:18}.

Inequalities of the form \eqref{aug100} can  be regarded as Gaussian analogues of Sobolev inequalities for the Laplacian in the Euclidean framework.
Gaussian Sobolev inequalities typically differ from their Euclidean counterparts because of the behaviour of the density of the Gauss measure near infinity.
This feature also shapes inequalities for the Ornstein-Uhlenbeck operator.
A general trait of the latter inequalities is that the improvement of the degree of integrability for a function $u$ guaranteed from that of $\lgn u$ in the space $\RG$ is considerably lesser than that entailed by $\Delta u$ in domains of finite Lebesgue measure in $\rn$.
For instance, no inequality of the form \eqref{aug100} holds with $Y\RG = L^\infty \RG$, whatever $X\RG$ is.
By contrast, unlike the Euclidean inequalities, constants in the Gaussian inequalities are dimension-free.
This accounts for their use in questions of probability theory, where the dimension  $n$ is usually sent to infinity, and for the derivation of Sobolev-type inequalities in infinite dimensional spaces  -- see \eg the classical papers ~\citep{Nel:73,Fei:75,Rot:85,Wei:79}.

The analysis of Sobolev inequalities in the Gauss space was pioneered by Gross in the paper \cite{Gro:75}, where a sharp first-order inequality for the $L^2\RG$ norm of the gradient was established.
A vast amount of literature on Gaussian Sobolev-type inequalities has flourished over the years on the wake of Gross' work.
A very limited sample of contributions on this topic includes \citep{Bar:06, Bar:08, Bob:97, Bob:98, Bra:07, Car:01, Cia:20, Cia:22, Cip:00, Fei:75, Fuj:11, Mil:09, Pel:93, Rot:85}.
In particular, inequalities involving the Ornstein-Uhlenbeck operator are derived in \citep{Bet:02,Bla:07,Tia:10}.
The latter papers deal, in fact, with even more general second-order elliptic operators but focus on a different class of functions, which are defined in open subsets $\Omega\subsetneq\rn$ and vanish on $\partial\Omega$.

Our purpose is to offer a unified approach in detecting the optimal spaces $X\RG$ and $Y\RG$ in inequalities of the form~\eqref{aug100}.
More precisely, we are aimed at characterizing the optimal (smallest possible) target space $Y\RG$ in inequality~\eqref{aug100} associated with a given domain space $X\RG$, and, conversely,  the optimal (largest possible) domain space $X\RG$ associated with a given target space $Y\RG$, within certain classes of rearrangement-invariant spaces.
One main result of this paper provides us with necessary and sufficient conditions for the existence of these optimal spaces in the class of all rearrangement-invariant spaces and exhibits an expression of their norms.

A critical step in our method is a reduction principle on the equivalence of any inequality of the form~\eqref{aug100} to a one-dimensional inequality for a Calder\'on type operator modelled upon the isoperimetric function of the Gauss space.
This principle also enables us to determine optimal targets and domains in inequalities for special families of rearrangement-invariant spaces, such as Orlicz spaces, Lorentz-Zygmund spaces, Marcinkiewicz spaces.

The point of departure for the reduction principle is, in turn, a pointwise inequality for the decreasing rearrangement of a function $u$ in terms of that of $\lgn u$.
Inequalities of this kind for the Laplacian in Euclidean domains are a special case of a classical result of \citet{Tal:76}.
They rest upon differential inequalities on level sets of functions and on the Euclidean isoperimetric inequality, which were also earlier used by Maz'ya \citep{Maz:61,Maz:69} in the proof of estimates for solutions to boundary value problems for classes of elliptic equations in even more general contexts.
Inequalities in the same vein for solutions to Dirichlet problems, with homogeneous boundary conditions, for $\lgn u$ (and more general differential operators whose ellipticity is governed by the Gaussian density) on subsets of the Gauss space are studied in~\citep{Bet:02}, and are also reproduced in~\citep{Mar:10}.

Although the proof of the rearrangement estimate to be exploited here follows along the same lines as those of the contributions mentioned above, some technical issues arise.
They are due to the fact that  functions $u$ defined on the entire space $\rn$ are considered and, especially, that $\lgn u$ is assumed to belong to an arbitrary rearrangement-invariant space, and hence can possibly suffer from very weak integrability properties.
This calls for an extension of the Ornstein-Uhlenbeck operator $\lgn$ beyond its natural domain via a definition patterned on those introduced in the theory of elliptic partial differential equations on Euclidean domains, with merely integrable right-hand sides.
With this regard, an additional result of independent interest will be presented concerning the existence and uniqueness (up to additive constants) of a generalized solution $u$ to the equation
\begin{equation}	\label{equation}
	\lgn u =  f
	\quad\text{in $\RG$,}
\end{equation}
for every $f\in L^1\RG$.
Under this assumption on $f$, an optimal regularity estimate for $u$ and $\nabla u$ in Marcinkiewicz-type spaces is also offered.

Our discussion begins with equation~\eqref{equation}.
The existence and uniqueness of the solution, and a fundamental rearrangement estimate, are addressed in Section~\ref{solutions}, after recalling the necessary background on the functional setting in Section~\ref{spaces}.
Section~\ref{main} contains the reduction principle and the characterization of the optimal target and domain in inequality~\eqref{aug100} in the class of all rearrangement-invariant spaces.
The rest of the paper is devoted to identifying such optimal spaces in customary and less conventional families of rearrangement-invariant spaces.
Specifically, Orlicz spaces are considered in Section~\ref{orlicz}, Lorentz and Lorenz-Zygmund spaces are the subjects of Section~\ref{LZ},  and Marcinkiewicz type spaces are the content of the final Section~\ref{endpoint}.

\section{Function spaces}\label{spaces}

\paragraph*{Measure spaces}

Let $(\RR,\nu)$ be a $\sigma$-finite non-atomic measure space.
We denote by $\MM(\RR,\nu)$ the set of all $\nu$-measurable functions on~$\RR$ taking their values in $[-\infty,\infty]$.
Moreover, we denote by $\Mpl(\RR ,\nu)$ the subset of all nonnegative functions in $\MM(\RR,\nu)$ and by $\MM_0(\RR,\nu)$ the collection of all functions in $\MM(\RR,\nu)$ which are finite almost everywhere on $\RR$.
If $\RR$ is an interval with endpoints $a,b\in[-\infty,\infty]$, $a<b$, and $\nu$ is the one-dimensional Lebesgue measure, then we simply write $\MM(a,b)$, $\Mpl(a,b)$ and $\MM_0(a,b)$.
Furthermore, the Lebesgue measure of a set $E\subset\R^n$ will be denoted by $|E|$.

\paragraph*{Rearrangements}

The \emph{decreasing rearrangement} of a function $\phi\in\MM(\RR,\nu)$ is the function $\phi^*\colon\break(0,\nu(\RR))\to[0,\infty]$ defined as
\begin{equation}
	\phi^*(s)
	 = \inf\bigl\{ t\in\R: \nu\left(\{x\in \RR : |\phi(x)|>t\}\right)\le s\bigr\}
	\quad\text{for $s\in(0,\nu(\RR))$}.
\end{equation}
The \emph{signed decreasing rearrangement} $\phi^\circ\colon(0,\nu(\RR))\to[-\infty,\infty]$
of $\phi\in\MM(\RR,\nu)$ is defined as
\begin{equation}
	\phi^\circ(s)
		= \inf\bigl\{t\in\R : \nu(\{x\in\RR : \phi(x)>t\}) \le s \bigr\}
		\quad\text{for $s\in(0,\nu(\RR))$.}
\end{equation}

A basic property of the decreasing-rearrangement is the \emph{Hardy-Littlewood inequality}, which tells us that
\begin{equation}\label{HL}
	\int_0^{\nu(\RR)} \phi ^*(s) \psi^*(\nu(\RR)-s)\, \d s
 \le \int_{\RR} |\phi (x) \psi (x)|\,\d\nu (x)
		\le \int_0^{\nu(\RR)} \phi ^*(s) \psi^*(s)\, \d s
\end{equation}
for every $\phi, \psi \in  \MM(\RR,\nu)$, provided that $\nu(\RR)<\infty$.
The \emph{maximal non-increasing rearrangement} of $\phi$ is the function $\phi^{**}\colon(0,\nu(\RR))\to[0,\infty]$, defined by
\begin{equation*}
	\phi^{**}(s)
		= \frac{1}{s} \int_0^s \phi^*(r)\,\d r
	\quad\text{for $s\in(0,\nu(\RR))$}.
\end{equation*}
The function $\phi^{**}$ is non-increasing, and one clearly has $\phi^{*}\le \phi^{**}$ on $(0,\nu(\RR))$. The operation $\phi\mapsto\phi^{**}$ is subadditive in the sense that
\begin{equation}\label{subadd}
	\int_0^s (\phi + \psi)^{*}(r)\,\d r
		\le \int_0^s \phi^{*}(r)\,\d r + \int_0^s \psi^{*}(r)\,\d r
		\quad\text{for $s\in(0,\nu(\RR))$}
\end{equation}
for every $\phi,\psi\in\Mpl(\RR,\nu)$.
On the other hand, the operation $\phi\mapsto\phi^{*}$ is not subadditive.
Still, one has that
\begin{equation}\label{gen5}
	(\phi + \psi)^{*}(s)
		\le  \phi^{*}(s/2) + \psi ^{*}(s/2)
	\quad\text{for $s\in(0,\nu(\RR))$}
\end{equation}
for every $\phi,\psi\in\Mpl(\RR,\nu)$.
The functions $\phi,\psi\in\MM(\RR,\nu)$ will be called \emph{equimeasurable} if $\phi^*=\psi^*$ on $(0,\nu(\RR))$.

Some central results in theory of rearrangements are consequences of \emph{Hardy's lemma}, which states that, given $L\in(0,\infty]$, if the functions $g_1, g_2 \in \Mpl (0,L)$ are such that
\begin{equation*}
	\int_0^s g_1(r)\,\d r
		\le \int_0^s g_2(r)\,\d r
		\quad\text{for $s\in(0,L)$,}
\end{equation*}
then
\begin{equation*}
	\int_0^L g_1(r)h(r)\,\d r
		\le \int_0^L g_2(r)h(r)\,\d r
\end{equation*}
for every nonincreasing function $h\colon(0,L)\to(0,\infty)$.

\paragraph*{Mean values and medians}

Assume that $\nu(\RR)<\infty$.
The \emph{mean value} of  a function  $\phi\in L^{1}(\RR,\nu)$ is given by
\begin{equation} \label{E:equiintegrability-with-signed}
	\mv(\phi)
		= \frac 1{\nu(\RR)}\int_{\RR} \phi(x)\, \d \nu(x)
		= \frac 1{\nu(\RR)} \int_{0}^{\nu(\RR)}\phi^\circ(s)\,\d s.
\end{equation}
Also, for  $\phi \in\MM(\RR,\nu)$, we define its \emph{median} by
\begin{equation} \label{median}
	\med(\phi) = \phi^\circ \bigl(\tfrac {\nu(\RR)}2 \bigr).
\end{equation}
The following lemma provides us with a link between $\nu$-\ae convergence of a sequence of functions and pointwise convergence of their signed rearrangement.

\begin{lemma} \label{L:med}
Assume that the sequence $\{\phi_k\} \subset \MM(\RR,\nu)$ and the function $\phi\in\MM(\RR,\nu)$ are such that $\phi_k\to \phi$ $\nu$-\ae in $\RR$.
Then
\begin{equation} \label{E:limsup}
	\phi^\circ(s)
		\le \liminf_{k\to\infty} \phi_k^\circ(s)
		\le \limsup_{k\to\infty} \phi_k^\circ(s)
		\le \phi^\circ(s-)
	\quad\text{for every $s\in(0,\nu(\RR))$}.
\end{equation}
Here, the notation $\phi^\circ(s-)$ stands for the limit of $\phi^\circ$ at $s$ from the left.

In particular, if $\phi^\circ$ is continuous at ${\nu(\RR)}/2$, then
\begin{equation} \label{E:medians}
	\lim_{k\to\infty} \med(\phi_k) = \med(\phi).
\end{equation}
\end{lemma}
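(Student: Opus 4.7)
The plan is to pass to distribution functions. Let $\lambda_\phi(t) = \nu(\{\phi > t\})$; since $\phi^\circ$ is the right-continuous generalised inverse of $\lambda_\phi$, one has the two elementary equivalences
\[
\phi^\circ(s) \le t \iff \lambda_\phi(t) \le s \qquad \text{and} \qquad \phi^\circ(s-) < t \iff \lambda_\phi(t-) < s,
\]
both immediate from the definition of $\phi^\circ$ together with the right-continuity and monotonicity of $\lambda_\phi$. I would isolate these equivalences at the very outset, because the only genuinely delicate aspect of the whole argument is the strict-versus-non-strict and left-versus-right bookkeeping in the interplay between $\lambda_\phi$ and $\phi^\circ$; after that, everything proceeds mechanically.

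For each fixed $t \in \R$, inspecting the three cases $\phi(x) > t$, $\phi(x) < t$, and $\phi(x) = t$ gives
\[
\liminf_{k\to\infty} \chi_{\{\phi_k > t\}}(x) \ge \chi_{\{\phi > t\}}(x) \qquad \text{and} \qquad \limsup_{k\to\infty} \chi_{\{\phi_k > t\}}(x) \le \chi_{\{\phi \ge t\}}(x)
\]
for $\nu$-a.e.\ $x$. Integrating these inequalities and invoking Fatou's lemma for the first, together with its reverse counterpart for the second (admissible because $\nu(\RR) < \infty$ supplies the integrable dominator $\chi_\RR$), one obtains
\[
\lambda_\phi(t) \le \liminf_{k\to\infty} \lambda_{\phi_k}(t) \le \limsup_{k\to\infty} \lambda_{\phi_k}(t) \le \lambda_\phi(t-)
\]
for every $t \in \R$, using the identity $\nu(\{\phi \ge t\}) = \lambda_\phi(t-)$.

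It only remains to push this sandwich back through the two equivalences. For the lower bound in \eqref{E:limsup}, take any $t < \phi^\circ(s)$: then $\lambda_\phi(t) > s$, so $\lambda_{\phi_k}(t) > s$ eventually by the Fatou estimate, hence $\phi_k^\circ(s) > t$ eventually, so $\liminf_k \phi_k^\circ(s) \ge t$; sending $t \uparrow \phi^\circ(s)$ closes the first inequality. For the upper bound, take $t > \phi^\circ(s-)$: then $\lambda_\phi(t-) < s$ by the second equivalence, so by the reverse Fatou estimate $\lambda_{\phi_k}(t) \le s$ eventually, hence $\phi_k^\circ(s) \le t$ eventually, and $\limsup_k \phi_k^\circ(s) \le t$; sending $t \downarrow \phi^\circ(s-)$ closes the third. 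Specialising to $s = \nu(\RR)/2$, the assumed continuity of $\phi^\circ$ at that point collapses $\phi^\circ(s-)$ with $\phi^\circ(s) = \med(\phi)$, so the sandwich forces $\lim_k \med(\phi_k) = \med(\phi)$ and yields \eqref{E:medians}.
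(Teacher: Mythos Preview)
Your proof is correct. Both you and the paper begin by establishing the same sandwich on distribution functions,
\[
\lambda_\phi(t)\le\liminf_{k}\lambda_{\phi_k}(t)\le\limsup_{k}\lambda_{\phi_k}(t)\le\lambda_\phi(t-),
\]
via pointwise comparison of indicators and Fatou/reverse Fatou (your use of $\nu(\RR)<\infty$ for the dominated reverse step is exactly what the paper's continuity-from-above argument needs as well). The difference lies in how this is transferred to $\phi^\circ$. The paper proceeds indirectly: from the sandwich it deduces that $\mu_k\to\mu$ at every continuity point of $\mu$, hence Lebesgue-a.e.\ on $\R$, and then invokes a second application of the same principle---now with $(\R,|\cdot|)$ in place of $(\RR,\nu)$ and $\mu,\mu_k$ in place of $\phi,\phi_k$---to reach $\phi^\circ$. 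You bypass this recursion entirely by reading the sandwich back through the two generalised-inverse equivalences $\phi^\circ(s)\le t\iff\lambda_\phi(t)\le s$ and $\phi^\circ(s-)<t\iff\lambda_\phi(t-)<s$. Your route is shorter and keeps the strict/non-strict and left/right bookkeeping explicit, which is where the content actually lies; the paper's recursive device is elegant but demands that the reader unwind one more layer of duality between a monotone function and its distribution.
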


\begin{proof}
The proof of the first inequality in~\eqref{E:limsup} is analogous to that for the decreasing rearrangement $\phi^*$ given in \citep[Chapter~2, Proposition~1.7]{Ben:88}.
We may thus limit ourselves to proving the last inequality.
By our assumptions, there exists a  set $N\subset \RR$ such that $\nu(N)=0$ and
\begin{equation} \label{E:limsup-x}
	\lim_{k\to\infty}\sup_{l\ge k} \phi_l(x) \le \phi(x)
		\quad\text{for $x\in\RR\setminus N$}.
\end{equation}
Given $t\in \R$, we define the sets
\begin{equation*}
	E(t) = \{x\in\RR\setminus N: \phi(x) > t\}
	\quad\text{and}\quad
	E_k(t) = \{x\in\RR\setminus N: \phi_k(x) > t\}
\end{equation*}
for $k\in\N$.
Fix $\varepsilon >0$.
Inequality~\eqref{E:limsup-x} implies that, if $x\in E(t-\varepsilon)^c\setminus N$, then there exists $k\in\N$ such that $x\in E_l(t)^c$ for all $l\ge k$, namely
\begin{equation*}
	E(t-\varepsilon)^c \subset \bigcup_{k=1}^\infty \bigcap_{l=k}^\infty E_l(t)^c
	\quad\text{or equivalently}\quad
	E(t-\varepsilon) \supset \bigcap_{k=1}^\infty \bigcup_{l=k}^\infty E_l(t).
\end{equation*}
Here, the suffix ``$c$'' stands for complement in $\RR$.
Therefore,
\begin{equation} \label{E:limpsup-gamma}
	\nu\bigl(E(t-\varepsilon)\bigr)
		\ge \nu \biggl( \bigcap_{k=1}^\infty \bigcup_{l=k}^\infty E_l(t) \biggr)
    = \lim_{k\to\infty} \nu \biggl(\bigcup_{l=k}^\infty E_l(t) \biggr)
    \ge \lim_{k\to\infty} \sup_{l\ge k} \nu\bigl(E_l(t)\bigr).
\end{equation}
Define the functions $\mu\colon\R\to[0, \infty)$ as $\mu(t)=\nu(E(t))$ and  $\mu_k\colon(0,\infty)\to[0,\infty)$ as $\mu_k(t)=\nu(E_k(t))$ for $k \in \N$.
Inequality~\eqref{E:limpsup-gamma} reads
$\mu(t-\varepsilon)\ge\limsup_{k\to\infty} \mu_k(t)$. Hence,  on passing to the limit as
$\varepsilon\to0^+$ one deduces that
\begin{equation} \label{E:limsup-circ}
	\mu(t-)\ge \limsup_{k\to\infty} \mu_k(t).
\end{equation}
On the other hand, a  property analogous to that established  in
 \citep[Chapter~2, Proposition~1.7]{Ben:88} for $\phi^*$ ensures that
\begin{equation} \label{E:liminf-circ}
	\mu(t)\le \liminf_{k\to\infty} \mu_k(t).
\end{equation}
Since the function  $\mu$ is non-increasing,  combining inequalities
\eqref{E:limsup-circ} and \eqref{E:liminf-circ} implies that $\mu_k\to\mu$ $\nu$-\ae in $\R$.
Since
\begin{equation*}
	\phi^\circ(s)=|\{t\in \R: \mu(t)>s\}|,
\end{equation*}
the last inequality in \eqref{E:limsup} follows from an analogue of~\eqref{E:limsup-circ}, with $\mu$ and $\mu_k$ replaced by $\phi$ and $\phi_k$ respectively, and  the measure $\nu$ by the Lebesgue measure.

Finally, equation \eqref{E:medians} follows from an application of equation \eqref{E:limsup} with $s={\nu(\RR)}/2$.
\end{proof}

\paragraph*{Function norms and rearrangement-invariant spaces}

Let $L\in(0,\infty]$.
A \emph{function norm} is defined as a functional $\|\cdot\|_{X(0,L)}\colon \Mpl(0,L)\to[0,\infty]$ satisfying, for all $g$, $h$ and $\{g_k\}\subset{\Mpl(0,L)}$, and every $\lambda {\in[0,\infty)}$:
\begin{enumerate}[label={\rm(P\arabic*)}, leftmargin=8ex]
\item\label{en:p1}
$\|g\|_{X(0,1)}=0$ if and only if $g=0$ \ae,\\
$\|\lambda g\|_{X(0,1)}= \lambda \|g\|_{X(0,1)}$,\\
$\|g+h\|_{X(0,1)}\le \|g\|_{X(0,1)}+ \|h\|_{X(0,1)}$;
\item\label{en:p2} If $g\le h$ \ae, then $\|g\|_{X(0,L)}\le\|h\|_{X(0,L)}$;
\item\label{en:p3} If $g_k\uparrow g$ \ae, then $\|g_k\|_{X(0,L)}\uparrow\|g\|_{X(0,L)}$;
\item\label{en:p4} $\|\chi_{E}\|_{X(0,L)}<\infty$ for every measurable set $E\subset [0,L]$ of finite measure;
\item\label{en:p5} There exists a constant $c$ such that
$\int_E g(s)\,\d s\le c\|g\|_{X(0,L)}$ for every $g\in\Mpl(0,L)$ and every measurable set $E\subset (0,L)$ of finite measure.
\end{enumerate}
If, in addition,
\begin{enumerate}[resume*]
\item\label{en:p6} $\|g\|_{X(0,L)}=\|h\|_{X(0,L)}$ whenever $g^* = h^*$,
\end{enumerate}
then we say that $\|\cdot\|_{X(0,1)}$ is a \emph{rearrangement-invariant function norm}.

The \emph{associate function norm} $\|\cdot\|_{X'(0,L)}$ of a function norm $\|\cdot\|_{X(0,L)}$ is defined by
\begin{equation} \label{norm}
	\|g\|_{X'(0,L)}
		= \sup
			\left\{
				\int_0^{L} g(s)h(s)\,\d s:
					h\in{\Mpl(0,L)},\, \|h\|_{X(0,L)}\le 1
			\right\}
\end{equation}
for $g\in\Mpl(0,L)$.
Note that
\begin{equation} \label{X''}
	\|\cdot \|_{(X')'(0,L)}
		= \|\cdot \|_{X(0,L)}.
\end{equation}
The  \emph{rearrangement-invariant space} $X(\RR,\nu)$ built upon the function norm $\|\cdot\|_{X(0,\nu(\RR))}$ is defined as the collection of all functions $\phi\in\MM(\RR,\nu)$ such that the quantity $\|\phi\|_{X(\RR,\nu)}$, given by
\begin{equation}\label{ri-norm}
    \|\phi\|_{X(\RR,\nu)} = \|\phi^*\|_{X(0,\nu(\RR))},
\end{equation}
is finite.
The space $X(\RR,\nu)$ is a Banach space, endowed with the norm given by \eqref{ri-norm}.
The space $X(0,\nu(\RR))$ is called the \emph{representation space} of $X(\RR,\nu)$.

We shall also employ the notation
\begin{equation}\label{perp}
  X_{\perp}(\RR,\nu)
		= \bigl\{ u \in X(\RR,\nu): \mv (u)=0\bigr\}.
\end{equation}
The \emph{associate space} $X'(\RR,\nu)$ of a~rearrangement-invariant~space $X(\RR,\nu)$ is the rearrangement-invariant space defined via the function norm $\|\cdot\|_{X'(0,\nu(\RR))}$.
The generalized \emph{H\"older inequality}
\begin{equation}\label{holder}
	\int_{\RR}|\phi(x)\psi(x)|\,\d\nu(x)
		\le \|\phi\|_{X(\RR,\nu)} \|\psi\|_{X'(\RR,\nu)}
\end{equation}
holds for every $\phi$ and $\psi$ in $\MM(\RR,\nu)$.
The \emph{fundamental function} $\varphi_X\colon[0,\nu(\RR))\to[0,\infty)$ of a rearrangement-invariant space $X(\RR,\nu)$ is defined as
\begin{equation}\label{fund}
	\varphi_X (t) = \|\chi _E\|_{X(\RR,\nu)}
		\quad \text{for $t\in [0,\nu(\RR))$,}
\end{equation}
where $E$ is any subset of $\RR$ such that $\nu(E)=t$. One has that
\begin{equation}\label{aug19}
	\varphi_X (t) \varphi_{X'}(t) = t
		\quad \text{for $t\in [0,\nu(\RR))$,}
\end{equation}
for every rearrangement-invariant space $X(\RR,\nu)$.

The following lemma extends \citep[Lemma 2.1]{Mil:09}  to arbitrary rearrangement-invariant spaces.

\begin{lemma} \label{med-mv}
Let $X(\RR,\nu)$ be any rearrangement-invariant space on a finite measure space
$(\RR, \nu)$. Then
\begin{equation}\label{aug20}
	\thalf \|\phi - \mv(\phi)\|_{X(\RR,\nu)}
		\le	 \|\phi - \med(\phi)\|_{X(\RR,\nu)}
		\le 3 \|\phi - \mv(\phi)\|_{X(\RR,\nu)}
\end{equation}
for every function $\phi\in X(\RR,\nu)$.
\end{lemma}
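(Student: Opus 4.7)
The plan is to reduce both inequalities in \eqref{aug20} to bounding the $X(\RR,\nu)$-norm of the constant function $c = \mv(\phi) - \med(\phi)$ and then invoking the triangle inequality. The key observation is that $\|c\,\chi_\RR\|_{X(\RR,\nu)} = |c|\varphi_X(\nu(\RR))$, and that by \eqref{aug19} one has $\varphi_X(\nu(\RR))\varphi_{X'}(\nu(\RR)) = \nu(\RR)$; so everything boils down to controlling $|\mv(\phi) - \med(\phi)|$ by a suitable rearrangement-averaged quantity built from $\phi - \mv(\phi)$ or $\phi - \med(\phi)$, respectively.

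For the upper inequality, I would set $\psi = \phi - \mv(\phi)$, so that $\med(\psi) = \med(\phi) - \mv(\phi)$. Unpacking the definition of $\psi^\circ$ at $s = \nu(\RR)/2$ yields both $\nu(\{\psi \ge \med(\psi)\}) \ge \nu(\RR)/2$ and $\nu(\{\psi \le \med(\psi)\}) \ge \nu(\RR)/2$; distinguishing the sign of $\med(\psi)$ then gives $\nu(\{|\psi| \ge |\med(\psi)|\}) \ge \nu(\RR)/2$, hence $\psi^*(s) \ge |\med(\psi)|$ for $s \in (0, \nu(\RR)/2)$. Integrating and pairing $\psi^*$ with $\chi_{(0,\nu(\RR)/2)}$ via \eqref{holder} in the representation space, together with monotonicity of the fundamental function, yields
$$\tfrac{\nu(\RR)}{2}\,|\med(\psi)| \le \int_0^{\nu(\RR)/2} \psi^*(s)\,\d s \le \|\psi\|_{X(\RR,\nu)}\,\varphi_{X'}(\nu(\RR)).$$
By \eqref{aug19} this rewrites as $\|\med(\psi)\|_{X(\RR,\nu)} = |\med(\psi)|\,\varphi_X(\nu(\RR)) \le 2\|\psi\|_{X(\RR,\nu)}$, and the triangle inequality then delivers $\|\phi - \med(\phi)\|_{X(\RR,\nu)} \le 3\|\phi - \mv(\phi)\|_{X(\RR,\nu)}$.

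For the lower inequality I would proceed symmetrically with $\tilde\psi = \phi - \med(\phi)$, so that $\mv(\tilde\psi) = \mv(\phi) - \med(\phi)$. By the representation \eqref{E:equiintegrability-with-signed} for the mean value and the H\"older inequality \eqref{holder} paired with $\chi_{(0,\nu(\RR))}$,
$$|\mv(\tilde\psi)| \le \tfrac{1}{\nu(\RR)}\int_0^{\nu(\RR)} \tilde\psi^*(s)\,\d s \le \tfrac{\varphi_{X'}(\nu(\RR))}{\nu(\RR)}\,\|\tilde\psi\|_{X(\RR,\nu)},$$
and multiplying by $\varphi_X(\nu(\RR))$ together with \eqref{aug19} gives $\|\mv(\tilde\psi)\|_{X(\RR,\nu)} \le \|\tilde\psi\|_{X(\RR,\nu)}$. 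A final triangle inequality then yields the lower bound with constant $\tfrac12$. The only genuinely delicate step is the rearrangement-theoretic inequality $\psi^*(s) \ge |\med(\psi)|$ on $(0, \nu(\RR)/2)$, which requires a careful treatment of the one-sided limits implicit in the signed-rearrangement definition; everything else is routine function-norm bookkeeping, with the identity \eqref{aug19} doing the decisive work of turning fundamental-function computations into the advertised absolute constants $\tfrac12$ and $3$, independent of $X$ and of $\nu(\RR)$.
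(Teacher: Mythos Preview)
Your proposal is correct and follows essentially the same approach as the paper's proof. Both arguments reduce each inequality to a bound on $|\mv(\phi)-\med(\phi)|$ via the triangle inequality: the lower bound comes from a direct H\"older estimate on $\phi-\med(\phi)$, and the upper bound from the level-set observation that $\nu(\{|\phi-\mv(\phi)|\ge |\med(\phi)-\mv(\phi)|\})\ge \nu(\RR)/2$, combined with the identity \eqref{aug19}; your integration of $\psi^*$ over $(0,\nu(\RR)/2)$ and the paper's direct lower bound on $\|\chi_E\|_{X(\RR,\nu)}$ are just two ways of packaging the same estimate.
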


\begin{proof}
\newcommand{\nR}{L}
Set $L=\nu(\RR)$. Owing to the H\"older inequality~\eqref{holder},
\begin{align} \label{aug21}
	|\mv(\phi) - \med(\phi)|
		\le \frac 1{\nR} \int_{\RR} |\phi - \med (\phi)| \,\d\nu
		\le \frac{\varphi _{X'}(\nR)}{\nR}
					\|\phi - \med(\phi)\|_{X(\RR, \nu)}
\end{align}
for $\phi\in X(\RR,\nu)$.
From this inequality and equation~\eqref{aug19} we deduce that
\begin{align} \label{aug22}
	\begin{split}
	\|\phi - \mv(\phi)\|_{X(\RR,\nu)}
		& \le \|\phi - \med(\phi)\|_{X(\RR,\nu)} +
					\|\med(\phi) - \mv(\phi)\|_{X(\RR,\nu)}
			\\
		& \le \|\phi - \med(\phi)\|_{X(\RR,\nu)} +
					\varphi _{X}(\nR) |\mv(\phi) - \med(\phi)|
			\\
		& \le \|\phi - \med(\phi)\|_{X(\RR,\nu)} +
						\frac{\varphi_{X}(\nR)\varphi_{X'}(\nR)}{\nR}
						\|\phi - \med(\phi)\|_{X(\RR,\nu)}
			\\
		& = 2 \|\phi - \med(\phi)\|_{X(\RR,\nu)},
	\end{split}
\end{align}
namely the first inequality in \eqref{aug20}.
As for the second one, we may assume, or replacing $\phi$ by $-\phi$, if necessary, that $\med(\phi)\ge \mv(\phi)$.
Let $E=\{|\phi-\mv(\phi)|\ge\med(\phi)-\mv(\phi)\}$ and observe that
\begin{align}	\label{aug23a}
	\|\phi - \mv (\phi)\|_{X(\RR,\nu)}
		\ge \|(\phi - \mv (\phi)) \chi_{E}\|_{X(\RR,\nu)}
		\ge (\med(\phi)-\mv(\phi)) \|\chi_{E}\|_{X(\RR,\nu)}
\end{align}
and
\begin{align}	\label{aug23b}
	\|\chi_{E}\|_{X(\RR,\nu)}
		= \varphi_X (|E|)
		= \frac{|E|}{\varphi_{X'} (|E|)}
		\ge \frac{|E|}{\varphi_{X'}(\nR)}.
\end{align}
Since
\begin{align}\label{aug24}
	\frac{\nR}2
		\le |\{\phi\ge \med(\phi)\}|
		\le |E|,
\end{align}
inequalities \eqref{aug23a} and \eqref{aug23b} imply that
\begin{align}\label{aug24-bis}
	\med(\phi)-\mv(\phi)
		\le 2 \frac{\varphi_{X'} (\nR)}{\nR}
				\|\phi - \mv (\phi)\|_{X(\RR, \nu)}.
\end{align}
Hence,
\begin{align}\label{aug25}
	\begin{split}
	\|\phi - \med(\phi)\|_{X(\RR,\nu)}
		& \le \|\phi - \mv(\phi)\|_{X(\RR,\nu)}
			+ \|\mv(\phi) - \med(\phi)\|_{X(\RR,\nu)}
		\\
		& = \|\phi - \mv(\phi)\|_{X(\RR,\nu)}
			+ (\med(\phi)-\mv(\phi)   ) \varphi_X(\nR)
		\\
		& \le \|\phi - \mv(\phi)\|_{X(\RR,\nu)}
			+ 2 \frac{\varphi_X(\nR) \varphi_{X'}(\nR)}{\nR}
					\|\phi - \mv(\phi)\|_{X(\RR,\nu)}
 		\\
		& = 3 \|\phi - \mv(\phi)\|_{X(\RR,\nu)}.
	\end{split}
\end{align}
The second inequality in~\eqref{aug20} is thus also established.
\end{proof}

\paragraph*{Embeddings and boundedness of operators}

Let $X(\RR,\nu)$ and $Y(\RR,\nu)$ be rearrangement-invari\-ant spaces.
We write $X(\RR,\nu)\to Y(\RR,\nu)$ to denote that $X(\RR,\nu)$ is continuously embedded into $Y(\RR,\nu)$, in the sense that there exists a constant $c$ such that $\|\phi\|_{Y(\RR,\nu)}\le c\|\phi\|_{X(\RR,\nu)}$ for every $\phi\in\MM(\RR,\nu)$.
Note that the embedding $X(\RR,\nu)\to Y(\RR,\nu)$ holds if and only if there exists a constant $c$ such that $\|g\|_{Y(0,\nu(\RR))}\le c\|g\|_{X(0,\nu(\RR))}$ for every $g\in\Mpl(0,\nu(\RR))$.
A property of function norms ensures that
\begin{equation*}
	X(\RR,\nu) \subset Y(\RR,\nu)
		\quad\text{if and only if}\quad
	X(\RR,\nu)\to Y(\RR,\nu).
\end{equation*}

Let $L\in(0,\infty]$.
We say that an operator $T$ is bounded from a rearrangement-invariant space $X(0,L)$ into a rearrangement-invariant space $Y(0,L)$, and we write
\begin{equation}\label{Tembed}
	T\colon X(0,L)\to Y(0,L),
\end{equation}
if $T$ maps functions from $\Mpl(0,L)$ into functions from  $\Mpl(0,L)$,
and
its norm, defined by
\begin{equation*}
	\|T\| = \sup
		\left\{
			\|Tg\|_{Y(0,L)}:
			g\in X(0,L)\cap\Mpl(0,L),\, \|g\|_{X(0,L)}\le 1
		\right\},
\end{equation*}
is finite.

The space $Y(0,L)$ will be called the \emph{optimal target} space in \eqref{Tembed}, within a certain class of rearrangement-invariant spaces if, whenever $Z(0,L)$ is another rearrangement-invariant space from  the same class such that $T\colon X(0,L)\to Z(0,L)$, then $Y(0,L)\to Z(0,L)$.
Analogously, the \emph{optimal domain} space $X(0,L)$ in \eqref{Tembed} within the relevant class obeys $Z(0,L)\to X(0,L)$ whenever $T\colon Z(0,L)\to Y(0,L)$.

Assume that the operators $T$ and $T'$, acting from $\Mpl(0,L)$ into $\Mpl(0,L)$, are such that
\begin{equation} \label{E:duo}
	\int_0^L Tg(s)h(s)\,\d s
		= \int_0^L g(s)T'h(s)\,\d s
\end{equation}
for every $g,h\in\MM_+(0,L)$. We then call the operator $T'$  \emph{adjoint} to $T$.
A simple argument involving Fubini's
theorem and the definition of the associate norm shows that
\begin{equation}\label{E:novabis}
	T\colon X(0,L)\to Y(0,L)
		\quad\text{if and only if}\quad
	T'\colon Y'(0,L)\to X'(0,L)
\end{equation}
and $\|T\|=\|T'\|$.

\paragraph*{Lorentz--Zygmund spaces}

A pivotal class of examples of rearrangement-invariant function norms is constituted by the Lebesgue norms.
The Lebesgue functional $\|\cdot\|_{L^p(0,L)}$, defined as usual for $p\in(0,\infty]$, is a rearrangement-invariant function norm if and only if $p\in[1,\infty]$.

If $L<\infty$, then a generalization of the Lebesgue functionals is provided by the \emph{Lorentz--Zygmund functionals} $\|\cdot\|_{L^{p,q;\alpha,\beta}(0,L)}\colon \Mpl(0,L)\to[0,\infty]$ and $\|\cdot\|_{L^{(p,q;\alpha,\beta)}(0,L)}\colon \Mpl(0,L)\to[0,\infty]$, defined by
\begin{equation*}
	\|g\|_{L^{p,q;\alpha,\beta}(0,L)}
		= \bigl\|s^{\ip-\iq}\ell(s/L)^{\alpha}\ell\ell(s/L)^{\beta}g^{*}(s)\bigr\|_{L^{q}(0,L)}
\end{equation*}
and
\begin{equation*}
	\|g\|_{L^{(p,q;\alpha,\beta)}(0,L)}
		= \bigl\|s^{\ip-\iq}\ell(s/L)^{\alpha}\ell\ell(s/L)^{\beta}g^{**}(s)\bigr\|_{L^{q}(0,L)}
\end{equation*}
for $g\in\Mpl(0,L)$, respectively.
Here, $p,q\in(0,\infty]$, $\alpha,\beta\in\R$ and
\begin{equation*}
    \ell(s)=1+\log\frac{1}{s},\ \ell\ell(s)=1+\log\Bigl(1+\log\frac{1}{s}\Bigr)
		\quad\text{for $s\in(0,1)$.}
\end{equation*}
For finite measure spaces $(\RR,\nu)$, the corresponding \emph{Lorentz--Zygmund spaces} $L^{p,q;\alpha,\beta}(\RR,\nu)$ and $L^{(p,q;\alpha,\beta)}(\RR,\nu)$ are (equivalent to) the rearrangement-invariant spaces built upon the function norms $\|\cdot\|_{L^{p,q;\alpha,\beta}(0,\nu(\RR))}$ and $\|\cdot\|_{L^{(p,q;\alpha,\beta)}(0,\nu(\RR))}$, respectively, if and only if one of the following conditions holds:
\begin{equation*}
    \begin{cases}
        1<p<\infty,\ 1\le q\le\infty,\ \alpha\in\R,\ \beta\in\R
            \\
        p=1,\ q=1,\ \alpha>0,\ \beta\in\R
            \\
        p=1,\ q=1,\ \alpha=0,\ \beta>0
          \\
        p=\infty,\ 1\le q\le\infty,\ \alpha+\frac{1}{q}<0,\ \beta\in\R
          \\
        p=\infty,\ 1\le q\le\infty,\ \alpha+\frac{1}{q}=0,\ \beta+\frac{1}{q}<0
          \\
        p=\infty,\ q=\infty,\ \alpha=0,\ \beta=0
   \end{cases}
\end{equation*}
in case of $L^{p,q;\alpha,\beta}(\RR,\nu)$, and
\begin{equation*}
    \begin{cases}
        0<p<\infty,\ 1\le q\le\infty,\ \alpha\in\R,\ \beta\in\R
          \\
        p=\infty,\ 1\le q\le\infty,\ \alpha+\frac{1}{q}<0,\ \beta\in\R
          \\
        p=\infty,\ 1\le q\le\infty,\ \alpha+\frac{1}{q}=0,\ \beta+\frac{1}{q}<0
          \\
        p=\infty,\ q=\infty,\ \alpha=0,\ \beta=0
   \end{cases}
\end{equation*}
in case of $L^{(p,q;\alpha,\beta)}(\RR,\nu)$.
We shall also write $L^{p,q}(\log L)^{q\alpha}(\log\log L)^{q\beta}(\RR,\nu)$ instead of $L^{p,q;\alpha,\beta}(\RR,\nu)$, and $L^{p,q}(\log L)^{q\alpha}(\RR,\nu)$ instead of $L^{p,q;\alpha,0}(\RR,\nu)$.

\paragraph*{Orlicz spaces}

A generalization of the Lebesgue spaces in a different direction is that of the Orlicz spaces.
They are generated by the so-called Luxemburg functionals, whose definition, in turn, rests upon that of Young function.
A \emph{Young function} $A\colon[0,\infty)\to[0,\infty]$ is a convex, left-continuous function such that $A(0)=0$ and $A$ is not constant in $(0,\infty)$.
The function $\widetilde A\colon[0,\infty)\to[0,\infty]$ denotes the \emph{Young conjugate} of $A$, and is defined as
\begin{equation*}
	\widetilde A(t)
		= \sup\{\tau t-A(\tau): \tau\ge 0\}
	\quad\text{for $t\ge 0$}.
\end{equation*}
The latter is also a Young function and its conjugate is $A$ again.
One has that
\begin{equation} \label{E:conjugate-invers}
	t\le A^{-1}(t)\,\widetilde{A}^{-1}(t)\le 2t
		\quad\text{for $t\ge 0$},
\end{equation}
where $A^{-1}$ denotes the generalized right-continuous inverse of $A$.
The function $B$, defined as $B(t)=cA(bt)$, where $b,c$ are positive constants, is also a Young function and
\begin{equation} \label{E:young-compl}
	\widetilde{B}(t) = c\widetilde{A}\bigl(\tfrac{t}{bc}\bigr)
		\quad\text{for $t\ge 0$.}
\end{equation}
A Young function $A$ is said to satisfy the \emph{$\Delta_2$-condition} near infinity if it is finite-valued and there exist constants $c>0$ and $t_0> 0$ such that
\begin{equation*}
	A(2t) \le c A(t)
		\quad\text{for $t\ge t_0$.}
\end{equation*}
Moreover, $A$ fulfills the \emph{$\nabla_2$-condition} near infinity if there exist constants $c>2$ and $t_0> 0$ such that
\begin{equation*}
	  A(2t) \ge c A(t)
		\quad\text{for $t\ge t_0$.}
\end{equation*}
These conditions are said to be satisfied  globally if they hold with $t_0=0$.

A Young function $A$ is said to \emph{dominate} another Young function $B$ near infinity if there exist constants $c>0$ and $t_0> 0$ such that
\begin{equation*}
	B(t) \le A(ct)
		\quad\text{for $t\ge t_0$}.
\end{equation*}
The function $A$ dominates $B$ globally if $t_0=0$.
The functions $A$ and $B$ are called \emph{equivalent} near infinity [globally] if they dominate each other near infinity [globally].

Given $L\in(0,\infty]$ and a Young function $A$, the \emph{Luxemburg functional} $\|\cdot\|_{L^A(0,L)}\colon \Mpl(0,L)\to[0,\infty]$ is defined by
\begin{equation*}
	\|g\|_{L^{A}(0,L)} = \inf
		\left\{
			\lambda>0:
				\int_{0}^{L} A\left(\frac{g(s)}{\lambda}\right)\d s\le 1
		\right\}
\end{equation*}
for $g\in\Mpl(0,L)$.
The corresponding \emph{Orlicz space} $L^{A}(\RR,\nu)$ is built on the function norm $ \|\cdot\|_{L^{A}(0,\nu(\RR))}$.

When $\nu (\RR)< \infty$, the alternate notation $A(L)(\RR,\nu)$ will also be employed when convenient to denote an Orlicz space associated with a Young function which agrees
with $A$ near infinity.

Also, if $\phi\in L^{A}(\RR,\nu)$ and $E\subset\RR$ is a measurable set, then we often use the abridged notation
\begin{equation*}
	\|\phi\|_{L^A(E, \nu)}
		= \|\phi\chi_E\|_{L^A(\RR,\nu)}.
\end{equation*}

For Young functions $A$ and $B$,
the embedding $L^A(\RR,\nu)\to L^B(\RR,\nu)$ holds if and only if either $\nu(\RR)<\infty$ and $A$ dominates $B$ near infinity, or $\nu(\RR)=\infty$ and $A$ dominates $B$ globally.
One has that $L^A(\RR,\nu)=L^B(\RR,\nu)$ up to equivalent norms if and only if either $\nu(\RR)<\infty$ and $A$ and $B$ are equivalent near infinity, or $\nu(\RR)=\infty$ and  $A$ and $B$ are equivalent globally.

One has that
\begin{equation*}
	L^A(\RR,\nu)' = L^{\tilde A}(\RR,\nu)
\end{equation*}
up to equivalent norms.

We will also need certain weak and strong versions of Orlicz spaces.
These will be defined in the next paragraph as particular cases of more general families of rearrangement-invariant spaces.

\paragraph*{Endpoint spaces}

Let $L\in(0,\infty]$.
A function $\varphi\colon[0,L]\to[0, \infty)$ will be called quasiconcave if it is non-decreasing, vanishes only at $0$, and the function $\tvp$, defined by
\begin{equation}\label{dec40}
	\tvp(s) = \frac{s}{\varphi(s)}
		\quad\text{for $s\in (0, L]$}
\end{equation}
and $\tvp(0)=0$, is non-increasing on $(0,L]$.
The \emph{Lorentz functional} $\|\cdot\|_{\Lambda_{\varphi}(0,L)}\colon \Mpl(0,L)\to[0,\infty]$ is defined as
\begin{equation*}
	\|g\|_{\Lambda_{\varphi}(0,L)}
		= \int_{0}^{L} g^{*}(s) \,\d \varphi(s)
\end{equation*}
for $g\in\Mpl(0,L)$.
The \emph{Lorentz endpoint space} $\Lambda_{\varphi}(\RR,\nu)$ is built on the Lorentz functional $\|\cdot\|_{\Lambda_{\varphi}(0,\nu(\RR))}$.
The \emph{Marcinkiewicz functional} $\|\cdot\|_{M_{\varphi}(0,L)}\colon \Mpl(0,L)\to[0,\infty]$ is defined by
\begin{equation*}
	\|g\|_{M_{\varphi}(0,L)}
		= \sup_{s\in(0,L)}\varphi(s)g^{**}(s)
\end{equation*}
for $g\in\Mpl(0,L)$, and the corresponding Marcinkiewicz space is defined through the Marcinkiewicz functional $ \|\cdot\|_{M_{\varphi}(0,\nu(\RR))}$.
A variant of the Marcinkiewicz functional is denoted by $\|\cdot\|_{m_{\varphi}(0,L)}$, and is obtained on replacing $g^{**}$ with $g^*$ in its definition.
Namely,
\begin{equation*}
	\|g\|_{m_{\varphi}(0,L)}
		= \sup_{s\in(0,L)}\varphi(s)g^{*}(s)
\end{equation*}
for $g\in\Mpl(0,L)$.
Notice that $\|\cdot\|_{m_{\varphi}(0,L)}$ need not be a rearrangement-invariant function norm in general.  It is however a quasi-norm, in the sense that it satisfies the triangle inequality up to a multiplicative constant.
The space $m_{\varphi}(\RR,\nu)$ will be called the \emph{weak type space} associated with $\varphi$.
We point out that the Lorentz-Zygmund spaces $L^{p,\infty;\alpha,\beta}(\RR,\nu)$ defined above are weak type spaces according to this definition.

Let us recall that
\begin{equation}\label{dic30}
	\Lambda_{\varphi}(\RR,\nu)' = M_{\tvp}(\RR,\nu)
		\quad\text{and}\quad
	M_{\varphi}(\RR,\nu)' = \Lambda_{\tvp}(\RR,\nu),
\end{equation}
see \eg~\citep[Chapter 5, Section 2]{Kre:82}.
For every quasiconcave function $\varphi$, one has
\begin{equation*}
	\|\chi_E\|_{\Lambda_{\varphi}(\RR,\nu)} = \varphi(s)
    \quad\text{and}\quad
	\|\chi_E\|_{M_{\varphi}(\RR,\nu)} = \varphi(s)
		\quad\text{whenever $\nu(E)=s\in[0,\nu(\RR)]$.}
\end{equation*}
Thus,
\begin{equation*}
	\varphi_{\Lambda_{\varphi}(\RR,\nu)}
		= \varphi_{M_{\varphi}(\RR,\nu)}
		= \varphi.
\end{equation*}
Moreover, if $X(\RR,\nu)$ is a rearrangement-invariant space such that $\varphi_X =\varphi$,
then
\begin{equation}\label{E:fundamental-endpoint-embeddings}
    \Lambda_{\varphi}(\RR,\nu) \to X(\RR,\nu) \to M_{\varphi}(\RR,\nu).
\end{equation}

If $A$ is a Young function, then the function $\varphi _A\colon (0, \infty) \to [0, \infty)$, defined as
\begin{equation*}
	\varphi_A(s) = \frac{1}{A^{-1}(\frac{1}{s})}
		\quad\text{for $s>0$,}
\end{equation*}
is concave, and hence quasiconcave.
The \emph{weak Orlicz space} $M^A(\RR,\nu)$ is defined as $M_{\varphi_A}(\RR,\nu)$ and the \emph{strong Orlicz space} $\Lambda^A(\RR,\nu)$ is defined as $\Lambda_{\varphi_A}(\RR,\nu)$.
Owing to equations~\eqref{dic30} and~\eqref{E:conjugate-invers}, one has that
\begin{equation}\label{dic31}
	(\Lambda^A)'(\RR,\nu) = M^{\tilde A}(\RR,\nu)
		\quad\text{and}\quad
	(M^A)'(\RR,\nu) = \Lambda^{\tilde A}(\RR,\nu),
\end{equation}
up to equivalent norms.
The expressions ``weak  Orlicz space'' and ``strong Orlicz spaces'' are adopted consistently with the embeddings
\begin{equation}\label{E:Orlicz-char}
    \Lambda^A(\RR,\nu) \to L^A(\RR,\nu) \to M^A(\RR,\nu),
\end{equation}
which hold for every Young function $A$ and every measure space $(\RR,\nu)$.
Notice that these embeddings can actually be strict.

\section{Extended domain of the Ornstein-Uhlenbeck operator and key estimates}\label{solutions}

The Sobolev space $W^{1,2}\RG$ is defined as
\begin{equation*}
	W^{1,2}\RG
		=\{
				u\in L^2\RG : \text{$u$ is weakly differentiable and $|\nabla u|\in L^2\RG$}
			\}.
\end{equation*}
Similarly,
\begin{equation*}
	W^{2,2}\RG
		= \{
				u\in L^2\RG : \text{$u$ is twice weakly differentiable and
					$|\nabla u|, |\nabla^2 u|\in L^2\RG$}
			\}.
\end{equation*}
The operator $\lgn$ is defined on a function $u\in W^{2,2}\RG$ via equation~\eqref{E:lgn-def}.
One has that $\lgn\colon W^{2,2}\RG\to L^2\RG$.
Moreover,
\begin{equation} \label{E:weak-formulation'}
	\int_{\rn} \nabla u\cdot\nabla v\,\dgn
		= - \int_{\rn} v\,\lgn u\,\dgn
\end{equation}
for every $v\in W^{1,2}\RG$, see \eg~\citep[Theorem 13.1.3]{Lun:15}.

Equation~\eqref{E:weak-formulation'} enables one to extend the operator $\lgn$, and to define $\lgn\colon\Wlgn L^2\RG\to L^2\RG$, where $\Wlgn L^2\RG$ consists of all functions $u\in W^{1,2}\RG$ such that there exists a function $f\in L^2_\perp\RG$ fulfilling
\begin{equation} \label{E:weak-formulation}
	\int_{\rn} \nabla u \cdot \nabla v\,\dgn
		= - \int_{\rn}v\,  f \,\dgn
\end{equation}
for every  $v\in W^{1,2}\RG$.
We then set $\lgn u =f$ for $u \in \Wlgn L^2\RG$.

The operator $\lgn$ can further be extended to the domain $D(\lgn)$, which consist of all functions $u\in L^1\RG$ such that there exists $f\in L^1_\perp\RG$ and a sequence of functions $\{u_k\}\subset\Wlgn L^2\RG$ fulfilling
\begin{equation} \label{apr1}
	u_k \to u
		\quad\text{\ae in $\rn$}
\end{equation}
and
\begin{equation} \label{apr2}
	\lgn u_k \to f
		\quad\text{in $L^1\RG$}.
\end{equation}
We then  set
\begin{equation*}
	\lgn u = f
\end{equation*}
for $u\in D(\lgn)$.
Moreover, given a function space $X\RG$, we define
\begin{equation} \label{Wlgn}
	\Wlgn X\RG
		= \{u\in D(\lgn) : \lgn u\in X\RG\}.
\end{equation}

Our first result, stated in Theorem~\ref{T:existence}, ensures that the operator
\begin{equation}\label{bij}
	\lgn\colon D(\lgn)/c \to L^1_\perp\RG
\end{equation}
is bijective, where $D(\lgn)/c$ denotes the quotient space where two functions in $D(\lgn)$ which differ by a constant are identified.
Theorem \ref{T:existence} also provides us with information on the regularity of functions in $D(\lgn)$.
Their regularity is suitably formulated in terms of membership in spaces of functions whose truncations are Sobolev functions, and in weak type spaces.

Given any $t>0$, denote by $T_{t}\colon\R\rightarrow\R$ the function given by
\begin{equation} \label{502}
	T_{t} (\tau) =
	\begin{cases}
		\tau
			& \text{if $|\tau|\le t$}
			\\
		t \sgn(\tau)
			& \text{if $|\tau|>t$}.
	\end{cases}
\end{equation}
We set
\begin{equation} \label{503}
	\TT^{1,2}\RG
		= \left\{
				u\in\MM\RG :  T_{t}(u)\in W^{1,2}\RG\text{ for every $t>0$}
			\right\}.
\end{equation}
If $u\in \TT^{1,2}\RG$, there exists a (unique) measurable function $Z_u\colon\rn\to\rn$ such that
\begin{equation} \label{504}
	\nabla \big(T_{t}(u)\big)
		= \chi_{\{|u|<t\}} Z_u
		\quad\text{\ae in $\rn$}
\end{equation}
for every $t>0$ \citep[Lemma 2.1]{Ben:95}.
Here, $\chi_E$ denotes the characteristic function of the set $E$.
One has that $u \in W^{1,2}\RG$ if and only if $u\in\TT^{1,2}\RG$ and $Z_u\in L^2\RG$, and, in this case, $Z_u=\nabla u$.
With abuse of notation, if $u\in\TT^{1,2}\RG$, we shall denote $Z_u$ simply by $\nabla u$.

\begin{theorem}[Bijectivity of the map \eqref{bij} and regularity  of functions in $D(\lgn)$] \label{T:existence}
Assume that $f \in L^1_\perp\RG$.
\\ {\rm Part 1.}
There exists a unique (up to additive constants) function $u\in D(\lgn)$, such that
\begin{equation} \label{apr6}
	\lgn u = f.
\end{equation}
Moreover, $u\in\TT^{1,2}\RG$ and any sequence $\{u_k\}$ satisfying \eqref{apr1} and \eqref{apr2} admits a subsequence, still indexed by $k$, such that
\begin{equation} \label{apr10-a}
	\lim_{k\to \infty} \nabla u_k = \nabla u
		\quad\text{\ae in $\rn$}.
\end{equation}
\\ {\rm Part 2.}
Let $u$ be the unique solution to equation~\eqref{apr6}. Then:
\begin{enumerate}
\item $u\in L^{1,\infty}\log L\RG$, and there exists an absolute constant $c$ such that
\begin{equation} \label{apr20}
	\|u - \med(u)\|_{L^{1,\infty}\log L\RG}
		\le c \|f\|_{L^1\RG};
\end{equation}
\item $|\nabla u|\in L^{1,\infty}(\log L)^\half\RG$, and there exists an absolute constant $c$ such that
\begin{equation} \label{apr30}
	\|\nabla u\|_{L^{1,\infty}(\log L)^\half\RG}
		\le c \|f\|_{L^1\RG}.
\end{equation}
\end{enumerate}
The spaces $L^{1,\infty}\log L\RG$ and $L^{1,\infty}(\log L)^\half\RG$ are optimal in inequalities~\eqref{apr20} and~\eqref{apr30}, respectively, among all weak type spaces.
\end{theorem}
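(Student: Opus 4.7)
The plan is to follow the Talenti-Maz'ya symmetrization approach combined with the Boccardo-Gallou\"et-Murat treatment of data merely in $L^1$, adapted to the Gauss space through the Gaussian isoperimetric inequality. The cornerstone is a pointwise rearrangement estimate for regular solutions: if $w \in \Wlgn L^2\RG \cap L^2_\perp\RG$ solves $\lgn w = g$ with $g \in L^2_\perp\RG$, then, denoting by $I$ the Gaussian isoperimetric function,
\begin{equation*}
    (w - \med(w))^\circ(s) \le c \int_s^{1/2} \frac{1}{I(r)^2} \int_0^r g^*(\rho)\,\d\rho\,\d r
    \quad\text{for $s \in (0,\tfrac12)$,}
\end{equation*}
together with a symmetric version on $(\tfrac12,1)$ and a parallel bound for $|\nabla w|^{**}$ featuring $1/I(r)$. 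These inequalities are obtained by differentiating $\int_{\{w>t\}} \lgn w \,\dgn = -\int_{\{w=t\}} |\nabla w|\,\dHg$ in $t$, applying the Gaussian isoperimetric inequality on the level surfaces, and invoking the Cauchy-Schwarz and Hardy-Littlewood inequalities. Using $\int_0^r g^* \le \|g\|_{L^1\RG}$ together with the asymptotic $I(r)^2 \sim r^2 \log(1/r)$ as $r \to 0^+$, the two integral expressions convert directly into the weak-type norms featuring in \eqref{apr20} and \eqref{apr30}.

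To produce the solution $u$ of Part~1, I would approximate $f \in L^1_\perp\RG$ by $f_k = T_k(f) - \mv(T_k(f)) \in L^2_\perp\RG$ and solve $\lgn u_k = f_k$ for the unique mean-zero $u_k \in \Wlgn L^2\RG$ via Lax-Milgram applied to the Dirichlet form on the quotient $W^{1,2}\RG / \R$. The rearrangement estimate then provides uniform control of $u_k - \med(u_k)$ in $L^{1,\infty} \log L\RG$ and of $\nabla u_k$ in $L^{1,\infty}(\log L)^{\half}\RG$. In addition, testing the equation for $u_k - u_m$ against $T_t(u_k - u_m - \med(u_k - u_m))$ in \eqref{E:weak-formulation} yields
\begin{equation*}
    \int_{\rn} \bigl|\nabla T_t\bigl(u_k - u_m - \med(u_k - u_m)\bigr)\bigr|^2\,\dgn
    \le t\,\|f_k - f_m\|_{L^1\RG},
\end{equation*}
whence, via a Boccardo-Murat diagonal/truncation argument combined with Lemma~\ref{L:med} to stabilize medians, a subsequence satisfies $u_k \to u$ a.e.\ and $\nabla u_k \to \nabla u$ a.e. This identifies $u \in \TT^{1,2}\RG \cap D(\lgn)$ with $\lgn u = f$, and \eqref{apr20}-\eqref{apr30} persist by lower semicontinuity of weak-type norms under a.e.\ convergence. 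Uniqueness up to constants follows because any $u \in D(\lgn)$ with $\lgn u = 0$ has an approximation $u_k \in \Wlgn L^2\RG$ with $\lgn u_k \to 0$ in $L^1\RG$; then \eqref{E:weak-formulation'} with $v = T_t(u_k - \med(u_k))$ and the Gaussian Poincar\'e inequality force $\nabla u = 0$ a.e.

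Optimality of the two targets among all weak type spaces is obtained by producing an extremal datum. I would take a radial $u(x) = \psi(|x|)$ designed so that, by direct computation from \eqref{E:lgn-def} and the density \eqref{gaussmeas}, $f := \lgn u$ belongs to $L^1_\perp\RG$ while the decreasing rearrangement of $u - \med u$ saturates $1/(s\log(2/s))$ and that of $|\nabla u|$ saturates $1/(s\log(2/s)^{\half})$ near $s = 0$. Any quasiconcave fundamental function strictly smaller than $s\log(2/s)$ (respectively $s\log(2/s)^{\half}$) would then fail to host the corresponding function in its Marcinkiewicz-type space, yielding the claimed optimality.

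The main technical obstacle is the gradient convergence step in the approximation argument: the Boccardo-Murat scheme has to be executed within the extended domain $D(\lgn)$, where admissible test functions must conform to the weak identity \eqref{E:weak-formulation} and where the absence of compactly supported test functions forces the use of medians (controlled by Lemma~\ref{L:med}) in place of mean values when normalizing the approximations. Once this hurdle is cleared, the remaining weak-type estimates reduce to routine integrations of $1/I(r)^2$ against the asymptotic $I(r)^2 \sim r^2 \log(1/r)$.
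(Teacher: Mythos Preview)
Your approach to Part~1 and to the estimates \eqref{apr20}--\eqref{apr30} is essentially the paper's: Talenti--Maz'ya rearrangement via the Gaussian isoperimetric inequality to get a priori bounds, then a Boccardo--Gallou\"et--Murat approximation to pass from $L^2$ to $L^1$ data. The paper organizes the convergence argument as ``Cauchy in measure'' for $\{u_k\}$ and $\{\nabla u_k\}$ separately, but the ingredients (truncation test functions $T_t(u_k-u_m)$, compactness of $W^{1,2}\to L^2$, the a priori bound on $\gamma_n(\{|u_k|>t\})$) are the same as in your sketch.

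The optimality argument, however, has a genuine gap. You propose a \emph{radial} extremal $u(x)=\psi(|x|)$. In Gauss space this is the wrong symmetry: the isoperimetric extremals are half-spaces, not balls, so the rearrangement estimate is saturated only by functions of a single coordinate. The paper accordingly takes
\[
u_\delta(x)=\sgn(x_1)\int_{\Phi(|x_1|)}^{1/2}\frac{1}{I(s)^2}\int_0^s g_\delta(r)\,\d r\,\d s,
\qquad g_\delta=\tfrac{1}{2\delta}\chi_{(0,\delta)},
\]
for which $\lgn u_\delta=-\sgn(x_1)\,g_\delta(\Phi(|x_1|))$ has $L^1$-norm exactly $1$, $\med(u_\delta)=0$, and the level sets are half-spaces whose Gaussian measure is read off directly from $\Phi$; this makes $u_\delta^*\ge\tfrac12\Theta(\delta)\chi_{(0,2\delta)}$ and $|\nabla u_\delta|^*\gtrsim I(\delta/2)^{-1}\chi_{(0,\delta)}$ explicit and dimension-free. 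A radial construction fails on two counts. First, $\lgn\psi(|x|)=\psi''(r)+\bigl(\tfrac{n-1}{r}-r\bigr)\psi'(r)$ carries the dimension, in tension with the absolute constants claimed in the theorem. Second, and more seriously, complements of balls have Gaussian measure $\sim c_n R^{n-2}e^{-R^2/2}$, so forcing $u^*(s)\approx 1/(s\ell(s))$ requires $\psi(R)$ to grow like $e^{R^2/2}R^{-n}$; a back-of-the-envelope check then gives $|\lgn u|\sim r\psi'(r)\sim r^{2-n}e^{r^2/2}$ near infinity, and $\int|\lgn u|\,\dgn\sim\int r\,\d r=\infty$. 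In other words, radial functions are strictly sub-extremal for the Gaussian rearrangement bound, and you cannot simultaneously keep $f\in L^1_\perp\RG$ and saturate the target decay. Replace the radial ansatz by the $x_1$-dependent one above and the optimality proof goes through as you outline.
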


We now address the question of a minimal integrability condition on $f$ guaranteeing that the solution $u$ to the equation $\lgn u = f$ be a genuine global Sobolev function, and not just a member of $\TT^{1,2}\RG$.

\begin{proposition}\label{P:classical-sobolev}
Assume that $f\in L^{1,1;\frac{1}{2}}_{\bot}\RG$.
Then the solution $u$ to the equation $\lgn u=f$ obeys $u\in W^{1,1}\RG$.
\end{proposition}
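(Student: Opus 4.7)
My plan is to establish the quantitative inequality $\|\nabla u\|_{L^1\RG}\le C\|f\|_{L^{1,1;\frac12}\RG}$. Combined with the fact that $u\in L^1\RG\cap\TT^{1,2}\RG$ (guaranteed by Theorem~\ref{T:existence}), this forces $u\in W^{1,1}\RG$: indeed, since $\nabla (T_t(u)) = \chi_{\{|u|<t\}}\nabla u$ in the sense of \eqref{504} and $|\nabla u|\in L^1\RG$, dominated convergence shows that $T_t(u)\to u$ in $L^1\RG$ and $\nabla(T_t(u))\to \nabla u$ in $L^1\RG$ as $t\to\infty$, which identifies $\nabla u$ as the distributional gradient of $u$.

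To obtain the quantitative inequality, I first reduce to a smooth case by approximating $f$ in $L^{1,1;\frac12}\RG$ by a sequence $\{f_k\}\subset L^2_\perp\RG$; for each $k$, Theorem~\ref{T:existence} supplies a solution $u_k$ of $\lgn u_k = f_k$ with $\med(u_k)=0$, and along a subsequence $\nabla u_k\to \nabla u$ almost everywhere by \eqref{apr10-a} (after normalising the limiting solution so that $\med(u)=0$). For such $u_k$, I apply a level-set argument that is already at the heart of the proof of Theorem~\ref{T:existence}. Writing $\mu_k(t)=\gamma_n(\{u_k>t\})$, integration of the equation over $\{u_k>t\}$, the identity $\lgn u_k = \rho^{-1}\mathrm{div}(\rho\nabla u_k)$ with $\rho$ the Gaussian density, the mean-zero condition $\int_{\rn}f_k\,\dgn=0$, and Hardy--Littlewood~\eqref{HL} yield
\[
	\int_{\{u_k=t\}} |\nabla u_k|\,\dHg
		= \Bigl|\int_{\{u_k>t\}} f_k\,\dgn\Bigr|
		\le \int_0^{\min\{\mu_k(t),\,1-\mu_k(t)\}} f_k^*(r)\,\d r.
\]
Cauchy--Schwarz on $\{u_k=t\}$, the coarea identity $\int_{\{u_k=t\}}|\nabla u_k|^{-1}\,\dHg=-\mu_k'(t)$, and the Gaussian isoperimetric inequality $\Hg(\{u_k=t\})\ge I_\gamma(\mu_k(t))$ (where $I_\gamma$ denotes the Gaussian isoperimetric function) combine into the two-sided estimate
\[
	\Hg(\{u_k=t\}) \le \sqrt{(-\mu_k'(t))\,\textstyle\int_{\{u_k=t\}} |\nabla u_k|\,\dHg}
		\qquad\text{and}\qquad
	-\mu_k'(t)\ge \frac{I_\gamma(\mu_k(t))^2}{\int_{\{u_k=t\}} |\nabla u_k|\,\dHg}.
\]
The coarea formula $\int_{\rn}|\nabla u_k|\,\dgn=\int_{\R}\Hg(\{u_k=t\})\,\d t$, the substitution $s=\mu_k(t)$, and the symmetry $I_\gamma(s)=I_\gamma(1-s)$ together give
\[
	\int_{\rn}|\nabla u_k|\,\dgn
		\le 2\int_{0}^{1/2} \frac{1}{I_\gamma(s)}\int_0^{s} f_k^*(r)\,\d r\,\d s.
\]

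The final step is to recognise that the right-hand side is controlled by the $L^{1,1;\frac12}$-norm of $f_k$. Since $I_\gamma(s)\asymp s\sqrt{\log(1/s)}$ as $s\to 0^+$, the substitution $w=\log(1/s)$ yields $\int_{r}^{1/2}I_\gamma(s)^{-1}\,\d s\le C\sqrt{\log(1/r)}$ for $r\in(0,1/2)$. Fubini's theorem then produces
\[
	\int_{\rn}|\nabla u_k|\,\dgn
		\le C\int_0^{1/2} f_k^*(r)\sqrt{\log(1/r)}\,\d r
		\le C\|f_k\|_{L^{1,1;\frac12}\RG}.
\]
Passing to the limit along the subsequence via Fatou's lemma (using $\nabla u_k\to\nabla u$ a.e.) and the convergence $f_k\to f$ in $L^{1,1;\frac12}\RG$ delivers $\|\nabla u\|_{L^1\RG}\le C\|f\|_{L^{1,1;\frac12}\RG}$, and the proposition follows from the $W^{1,1}$-identification of the first paragraph. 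The main technical obstacle is justifying the level-set and coarea computations on possibly non-regular level sets of $u_k$ (handled by Sard's theorem, approximation of $\chi_{\{u_k>t\}}$ by smooth cutoffs of $(u_k-t)_+$, and a standard regularisation of $u_k$ if necessary), and the careful use of the mean-zero hypothesis to ensure that the ill-behaviour of $I_\gamma$ near both endpoints $s=0$ and $s=1$ is tamed simultaneously---these are routine adaptations of the machinery already underlying Theorem~\ref{T:existence}.
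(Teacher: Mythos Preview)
Your proposal is correct and follows essentially the same route as the paper's proof: both derive the key inequality $\int_{\rn}|\nabla u|\,\dgn \le c\int_0^{1/2} f^*(s)\sqrt{\ell(s)}\,\d s$ by combining the level-set Cauchy--Schwarz estimate, the Gaussian isoperimetric inequality, the coarea formula, and the asymptotic $I(s)\asymp s\sqrt{\ell(s)}$, first for $u_k\in\Wlgn L^2\RG$ and then pass to the limit via Fatou. The only cosmetic difference is that the paper splits into $u_+$ and $u_-$ (using $f_\mp^*$ and the normalisation $\med(u)=0$ to confine $\mu(t)$ to $(0,\thalf)$), whereas you treat all $t\in\R$ at once by invoking the mean-zero of $f$ to replace $\mu_k(t)$ by $\min\{\mu_k(t),1-\mu_k(t)\}$ and then use the symmetry $I(s)=I(1-s)$; these are equivalent bookkeeping choices.
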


The proof of Theorem~\ref{T:existence} is based on a priori bounds for sequences of approximating functions as in \eqref{apr1}, according to a scheme introduced in \citep{Ben:95}.
These bounds are derived via rearrangement estimates for a function $u \in D(\lgn)$ and its (generalized) gradient $\nabla u$, in terms of $\lgn u$.
The relevant estimates are the subject of Theorem~\ref{T:pointwise-estimates} below, which is a variant of a result from \citep{Bet:02} and relies upon the isoperimetric inequality in the Gauss space.

Recall that the Gauss perimeter $P_{\gamma_n}(E)$ of a measurable set $E\subset \R^n$ can be defined as
\begin{equation*}
	P_{\gamma_n}(E) =
		\Hg(\partial^M E),
\end{equation*}
where
\begin{equation*}
	\dHg (x)=(2\pi)^{-\frac n2}
		 e^{-\frac{|x|^2}2}\,\d\HH^{n-1}(x),
\end{equation*}
$\partial^M E$ denotes the essential boundary of $E$ and $\HH^{n-1}$ is the $(n-1)$-dimensional Hausdorff measure.

The Gaussian isoperimetric inequality tells us that half-spaces minimize Gaussian perimeter among all measurable subsets of $\rn$ with prescribed Gauss measure \citep{Bor:75, Sud:74}.

An analytic formulation of this statement can be given in terms of the isoperimetric function~$I$, also called the isoperimetric profile, of the space $\RG$.
The function $I\colon[0,1]\to [0,\infty)$ is given by
\begin{equation} \label{E:I-def}
	I(s) = \frac{1}{\sqrt{2\pi}} e^{-\frac{\Phi^{-1}(s)^2}{2}}
		\quad\text{ for $s\in(0,1)$,}
\end{equation}
and $I(0)=I(1)=0$, where $\Phi\colon\R\to(0,1)$ is the function defined as
\begin{equation} \label{E:Phi-def}
	\Phi(t) = \frac{1}{\sqrt{2\pi}} \int_{t}^{\infty} e^{-\frac{\tau^2}{2}}\,\d\tau
		\quad\text{for $t\in\R$.}
\end{equation}
The Gaussian isoperimetric inequality then reads
\begin{equation} \label{E:isoperimeric-inequality}
	I(\gamma_n(E)) \le P_{\gamma_n}(E)
\end{equation}
for every measurable subset $E$ of $\R^n$.
Indeed,
\begin{equation*}
	 \gamma_n(\{x\in\rn: x_1\ge t\})= \Phi(t)
		\quad\text{for $t\in\R$,}
\end{equation*}
and
\begin{equation*}
	P_{\gamma_n} (\{x\in\rn: x_1\ge t\})
		=  \frac{1}{\sqrt{2\pi}} e^{-\frac{t^2}{2}}
		\quad\text{for $t\in\R$,}
\end{equation*}
where $x=(x_1,\dots,x_n)$. The function $I$ obeys
\begin{equation}\label{derivative}
\Phi '(t) = - I(\Phi(t)) \quad\text{for $t\in\R$.}
\end{equation}
Also,
\begin{equation} \label{E:I-limit}
	\lim_{s\to 0^+} \frac{I(s)}{s\sqrt{2\ell(s)}} = 1.
\end{equation}
Moreover, on defining the function $\weight\colon\left(0,\half\right)\to(0,\infty)$ by
\begin{equation} \label{E:weight-def}
	\weight(s) = \int_{s}^{\half} \frac{\d r}{I(r)^2}
		\quad\text{for $s\in\left(0,\thalf\right)$},
\end{equation}
one has that
\begin{equation} \label{E:weight-limit}
	\lim_{s\to 0^+} 2s\ell(s)\weight(s) = 1,
\end{equation}
see~\citep[Lemma~4.3]{Cia:21}.

\begin{theorem}[Rearrangement estimates for $u$ and $\nabla u$ via $\lgn u$] \label{T:pointwise-estimates}
Let $u\in D(\lgn)$.
Then
\begin{equation} \label{E:u-rearrangement}
	(u-\med(u))_{\pm}^*(s)
		\le \weight(s) \int_{0}^{s} \left( \lgn u \right)^*_{\mp}(r)\,\d r
			+ \int_{s}^{\half} \left( \lgn u \right)^*_ {\mp}(r)\,\weight(r)\,\d r
		\quad\text{for $s\in\left(0,\thalf\right]$}
\end{equation}
and
\begin{equation} \label{E:gradient-rearrangement}
	|\nabla (u-\med(u))_{\pm}|^*(s)
		\le
			\bigg(
				\frac 2s \int_{\frac s2}^{\half}
					\bigg(
						\int_0^r(\lgn u )^*_{\mp}(\varrho)\,\d \varrho
					\bigg)^2 \frac {\d r}{I(r)^2}
			\bigg)^{\half}
		\quad\text{for $s\in (0,\thalf)$}.
\end{equation}
\end{theorem}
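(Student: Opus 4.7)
My approach combines the Gaussian isoperimetric inequality \eqref{E:isoperimeric-inequality} with the coarea formula, divergence-form integration by parts, and Cauchy-Schwarz at the level-set scale, in the style of Maz'ya and Talenti as adapted to the Gauss space in~\citep{Bet:02}.

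\emph{Reduction.} Given $u\in D(\lgn)$, pick a sequence $\{u_k\}\subset\Wlgn L^2\RG$ as in \eqref{apr1}--\eqref{apr2}. Theorem~\ref{T:existence} yields (on a subsequence) $\nabla u_k\to\nabla u$ a.e., and Lemma~\ref{L:med} (together with a routine robustness argument for the choice of median when $u^\circ$ is discontinuous at~$\tfrac12$) gives $\med(u_k)\to\med(u)$. The lower semi-continuity of the decreasing rearrangement under a.e.\ convergence, proved as in Lemma~\ref{L:med}, then reduces both \eqref{E:u-rearrangement} and \eqref{E:gradient-rearrangement} to the case $u\in\Wlgn L^2\RG$, where $f:=\lgn u\in L^2\RG$ and the standard Sobolev calculus is available.

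\emph{The rearrangement estimate on $u$.} After possibly replacing $u$ by $-u$, I treat the $(\cdot)_+$ case only. Set $v=(u-\med u)_+$ and $\mu(t)=\gamma_n(\{u>t+\med u\})$ for $t>0$; then $\mu(t)\le\tfrac12$. The divergence-form identity $\lgn u=e^{|x|^2/2}\operatorname{div}(e^{-|x|^2/2}\nabla u)$, integration by parts, and the coarea formula yield
\begin{equation*}
    \int_{\{v=t\}}|\nabla v|\,\dHg=-\int_{\{v>t\}}f\,\dgn.
\end{equation*}
Since $\mv(f)=0$ and $\mu(t)\le\tfrac12$, the Hardy-Littlewood inequality \eqref{HL} applied through the complementary set gives
\begin{equation*}
    \int_{\{v=t\}}|\nabla v|\,\dHg\le\int_0^{\mu(t)}f_-^*(r)\,\d r=:G(\mu(t)).
\end{equation*}
Cauchy-Schwarz on $\{v=t\}$, together with the coarea identity $-\mu'(t)=\int_{\{v=t\}}|\nabla v|^{-1}\,\dHg$ and \eqref{E:isoperimeric-inequality}, produces the differential inequality $I(\mu(t))^2\le(-\mu'(t))\,G(\mu(t))$. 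Reparametrizing by $s=\mu(t)$ this becomes
\begin{equation*}
    -(v^*)'(s)\le\frac{G(s)}{I(s)^2}.
\end{equation*}
Integrating from $s$ to $\tfrac12$ (using $v^*(\tfrac12)=0$) and splitting the resulting double integral at $r=s$ by Fubini produces exactly the right-hand side of \eqref{E:u-rearrangement}, with the weight $\weight$ defined in \eqref{E:weight-def}.

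\emph{The gradient estimate.} Let $\Psi(\tau)=\int_{\{v>\tau\}}|\nabla v|^2\,\dgn$, so that coarea gives $-\Psi'(\tau)=\int_{\{v=\tau\}}|\nabla v|\,\dHg\le G(\mu(\tau))$. Combining this with $-(v^*)'(s)\le G(s)/I(s)^2$ at $\tau=v^*(s)$ produces, after multiplication, $\tfrac{\d}{\d s}\Psi(v^*(s))\le G(s)^2/I(s)^2$, whence
\begin{equation*}
    \Psi(v^*(s_0))\le\int_0^{s_0}\frac{G(r)^2}{I(r)^2}\,\d r\qquad\text{for $s_0\in(0,\tfrac12]$.}
\end{equation*}
The claimed pointwise bound on $|\nabla v|^*(s)$ is then extracted via the elementary monotonicity $(s/2)(|\nabla v|^*(s))^2\le\int_{s/2}^{s}(|\nabla v|^*)^2(r)\,\d r$ and a Hardy-Littlewood comparison that controls this dyadic band by the level-set energy $\Psi(v^*(s/2))$; the integration range is then extended from $(s/2,s)$ to $(s/2,\tfrac12)$ by positivity of $G^2/I^2$ to reproduce \eqref{E:gradient-rearrangement}.

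The principal obstacle, I expect, lies in the final step of the gradient argument: passing from the bound on $\Psi(v^*(\cdot))$, which integrates $|\nabla v|^2$ over superlevel sets of $v$, to a pointwise bound on $|\nabla v|^*$, for which superlevel sets of $|\nabla v|$ are the natural objects. Reconciling these via the Sobolev truncation $w=(v-v^*(s/2))_+$ (whose gradient has support of Gauss measure at most $s/2$) and careful rearrangement bookkeeping will be the technical crux.
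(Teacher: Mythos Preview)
Your overall architecture matches the paper's: first establish \eqref{E:u-rearrangement}--\eqref{E:gradient-rearrangement} for $u\in\Wlgn L^2\RG$ via level-set estimates driven by the Gaussian isoperimetric inequality, then pass to $D(\lgn)$ by approximation, Fatou for rearrangements, and convergence of medians. Your argument for \eqref{E:u-rearrangement} is essentially the paper's (the paper uses the weak formulation with truncated test functions in place of your formal coarea identity $-\mu'(t)=\int_{\{v=t\}}|\nabla v|^{-1}\,\dHg$, which makes the Cauchy--Schwarz step rigorous without assuming $|\nabla v|\neq 0$ on level sets, but the outcome is the same differential inequality). Incidentally, no ``robustness argument'' is needed for the median: since $u\in\TT^{1,2}\RG$, the function $u^\circ$ is automatically continuous at $\tfrac12$.

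The genuine gap is in your gradient step. Integrating $\tfrac{\d}{\d s}\Psi(v^*(s))\le G(s)^2/I(s)^2$ from $0$ to $s_0$ gives a bound on $\Psi(v^*(s_0))=\int_{\{v>v^*(s_0)\}}|\nabla v|^2\,\dgn$, an integral over a \emph{superlevel} set of $v$. But this quantity does not dominate $\int_{s/2}^{s}(|\nabla v|^*)^2$: the Hardy--Littlewood inequality only yields $\Psi(v^*(s/2))\le\int_0^{s/2}(|\nabla v|^*)^2$, the wrong direction. Your truncation $w=(v-v^*(s/2))_+$ does not help either, since $|\nabla w|\le|\nabla v|$ implies $|\nabla w|^*\le|\nabla v|^*$, again the wrong way. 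The fix is simply to integrate your differential inequality in the other direction, from $s_0$ to $\tfrac12$. Because $v^*(\tfrac12)=0$ and $\nabla v=0$ a.e.\ on $\{v=0\}$, this yields the \emph{sublevel}-set bound
\[
\int_{\{v\le v^*(s_0)\}}|\nabla v|^2\,\dgn
\;\le\;\int_{s_0}^{\half}\frac{G(r)^2}{I(r)^2}\,\d r.
\]
Now the \emph{lower} bound in \eqref{HL}, applied with $\phi=|\nabla v|^2$ and $\psi=\chi_{\{v\le v^*(s_0)\}}$ (a set of measure $\ge 1-s_0$), gives
\[
\int_{\{v\le v^*(s_0)\}}|\nabla v|^2\,\dgn
\;\ge\;\int_{s_0}^{\half}\bigl(|\nabla v|^*(r)\bigr)^2\,\d r
\;\ge\;s_0\,\bigl(|\nabla v|^*(2s_0)\bigr)^2,
\]
where the upper limit $\tfrac12$ is legitimate because $|\nabla v|^*$ vanishes beyond $\tfrac12$. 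Setting $s=2s_0$ reproduces \eqref{E:gradient-rearrangement}. This is exactly the paper's route.
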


As premised above, the proofs of Theorems~\ref{T:existence} and \ref{T:pointwise-estimates} are mutually related.
The outline is as follows.
We begin by proving inequalities \eqref{E:u-rearrangement} and \eqref{E:gradient-rearrangement} of Theorem~\ref{T:pointwise-estimates} for the restricted class of functions in $\Wlgn L^2\RG$.
These estimates are then applied in the proof of Part~1 of Theorem~\ref{T:existence} to obtain suitable a priori estimates for the approximating functions $u_k$ entering the definition of $\lgn u$ for $u\in D(\lgn)$, and for their gradients.
With this part of Theorem~\ref{T:existence} at our disposal, we are able to pass to the limit in inequalities \eqref{E:u-rearrangement} and \eqref{E:gradient-rearrangement} applied to $u_k$, and conclude their proof for every $u\in D(\lgn)$.
This completes the proof of Theorem \ref{T:pointwise-estimates}.
Inequalities~\eqref{E:u-rearrangement} and \eqref{E:gradient-rearrangement} of Theorem~\ref{T:pointwise-estimates} are a key tool in the proof of estimates~\eqref{apr20} and \eqref{apr30} of Part~2 of Theorem~\ref{T:existence}.

\begin{proof}[Proof of Theorem~\ref{T:pointwise-estimates} for $u\in \Wlgn L^2\RG$] Assume that $u\in \Wlgn L^2\RG$ and set, for simplicity,
$f=\lgn u$. Equation \eqref{E:u-rearrangement} is then equivalent to the couple of inequalities:
\begin{equation} \label{E:pointwise-1}
	0 \le u^\circ(s) - u^\circ(\thalf)
		\le \weight(s) \int_{0}^{s} f^*_-(r)\,\d r
			+ \int_{s}^{\half} f^*_-(r)\,\weight(r)\,\d r
		\quad\text{for $s\in\left(0,\thalf\right]$}
\end{equation}
and
\begin{equation} \label{E:pointwise-2}
	0 \le u^\circ(\thalf) - u^\circ(1-s)
		\le \weight(s) \int_{0}^{s} f^*_+(r)\,\d r
			+ \int_{s}^{\half} f^*_+(r)\,\weight(r)\,\d r
		\quad\text{for $s\in\left(0,\thalf\right]$}.
\end{equation}
We pattern their proof on arguments from \citep{Bet:02} and \citep{Cia:90}, which are in turn adapted from \citep{Tal:76}.
On replacing $u$ by $u-u^\circ(\thalf)$, which is still a solution to equation \eqref{E:weak-formulation}, we may assume that $u^\circ(\thalf)=0$.
We choose the test function $v$ in \eqref{E:weak-formulation} as
\begin{equation*}
	v =
		\begin{cases}
			1
				& \text{if $u>t+h$} \\
			\frac{u-t}{h}
				& \text{if $t<u\le t+h$} \\
			0
				& \text{if $u\le t$}
		\end{cases}
\end{equation*}
for $t\in(0, \esssup u)$ and $h>0$.
Equation \eqref{E:weak-formulation} thus yields
\begin{equation} \label{E:weak-formulation-plugged}
	\frac{1}{h} \int_{\{t<u\le t+h\}} |\nabla u|^2\,\dgn
		=    - \frac 1h\int_{\{t<u\le t+h\}} f(u-t)\,\dgn - \int_{\{u>t+h\}} f\,\dgn.
\end{equation}
Taking the limit as $h\to0^+$ in \eqref{E:weak-formulation-plugged},
we get, by dominated convergence theorem,
\begin{equation} \label{E:diff-ineq}
	-\frac{\d}{\d t} \int_{\{u>t\}} |\nabla u|^2\,\dgn
		= - \int_{\{u>t\}} f\,\dgn
	\quad\text{for \ae $t>0$.}
\end{equation}
Note that the left-hand side of \eqref{E:diff-ineq} is nonnegative.
The Hardy-Littlewood inequality tells us that
\begin{equation} \label{E:HL-f}
	- \int_{\{u>t\}} f\,\dgn
		\le \int_{\{u>t\}} f_-\,\dgn
		\le \int_{0}^{\mu(t)} f^*_-(s)\,\d s
	\quad\text{for $t>0$,}
\end{equation}
where we have set
\begin{equation} \label{E:mu-def}
	\mu(t)=\gamma_n(\{x\in\rn:u(x)>t\})
	\quad\text{for $t\ge 0$.}
\end{equation}
Applying the Cauchy-Schwartz inequality to the difference quotients, we have
\begin{align} \label{E:diff-CS}
	\begin{split}
	-\frac{\d}{\d t} \int_{\{u>t\}} |\nabla u|\,\dgn
		& \le \biggl( -\frac{\d}{\d t} \int_{\{u>t\}} \dgn \biggr)^\half \biggl( -\frac{\d}{\d t} \int_{\{u>t\}} |\nabla u|^2\,\dgn \biggr)^\half
			\\
		& \le \left( -\mu'(t) \right)^\half \biggl( \int_{0}^{\mu(t)} f_-^* (s)\,\d s\biggr)^\half
	\quad\text{for \ae $t>0$,}
	\end{split}
\end{align}
where we have also used \eqref{E:diff-ineq} and \eqref{E:HL-f}.
By the coarea formula we infer that
\begin{equation} \label{E:coarea}
	\int_{\{u>t\}} |\nabla u|\,\dgn
		= \int_{t}^{\infty} P_{\gamma_n}(\{u>\tau\})\,\d\tau
	\quad\text{for $t>0$.}
\end{equation}
Recall that $\mu (t) \le \frac 12$, since we are assuming that $u^\circ(\thalf)=0$.
Thereby, differentiation of \eqref{E:coarea} and the isoperimetric inequality~\eqref{E:isoperimeric-inequality} give
\begin{equation} \label{E:d-nabla-u-I}
	-\frac{\d}{\d t} \int_{\{u>t\}} |\nabla u|\,\dgn
		= P_{\gamma_n}(\{u>t\})
		\ge I\bigl(\mu(t)\bigr)
	\quad\text{for \ae $t>0$.}
\end{equation}
Coupling inequalities \eqref{E:diff-CS} and \eqref{E:d-nabla-u-I} tells us that
\begin{equation} \label{E:1frac}
	1\le \frac{-\mu'(t)}{I\bigl(\mu(t)\bigr)^2} \int_{0}^{\mu(t)} f_-^*(s)\,\d s
		\quad\text{for \ae $t\in(0,\esssup u)$}.
\end{equation}
By integrating inequality \eqref{E:1frac} over  $(0, \tau)$, we get
\begin{equation}\label{E:taufrac} \tau
		\le \int_{ 0}^{\tau} \frac{-\mu'(t)}{I\bigl(\mu(t)\bigr)^2}
			\int_{0}^{\mu(t)} f_-^*(s)\,\d s\,\d t \le \int_{\mu(\tau)}^{\half} \frac{1}{I(\sigma)^2}
		\int_{0}^{\sigma} f_-^*(s)\,\d s\,\d\sigma
		\quad\text{for $\tau\in[0,\esssup u)$}.
\end{equation}
Owing to the definition of the function $u^\circ$, equation \eqref{E:taufrac} yields
\begin{equation} \label{E:u-circ-oper-1}
	0 \le u^\circ(s)
		\le \int_{s}^{\half} \frac{1}{I(r)^2}
			\int_{0}^{r} f_-^*(\varrho)\,\d \varrho\,\d r
		\quad\text{for $s\in\left(0,\thalf\right]$}.
\end{equation}
If $s\in\left(\thalf,1\right)$, an analogous argument and the fact that $I(s)=I(1-s)$ for $s\in [0,1]$ imply that
\begin{equation} \label{E:u-circ-oper-2}
	0 \le - u^\circ(1-s)
		\le \int_{s}^{\half} \frac{1}{I(r)^2}
			\int_{0}^{r} f^*_+(\varrho)\,\d \varrho\,\d r
		\quad\text{for $s\in\left[\thalf,1\right)$}.
\end{equation}
Since, by Fubini's theorem,
\begin{align*}
	\int_{s}^{\half} \frac{1}{I(r)^2} \int_{0}^{r} f_{\pm}^*(\varrho)\,\d \varrho\,\d r
		& = \int_{s}^{\half} \frac{1}{I(r)^2} \int_{0}^{s} f^*_{\pm}(\varrho)\,\d \varrho\,\d r
			+ \int_{s}^{\half} \frac{1}{I(r)^2} \int_{s}^{r} f^*_{\pm}(\varrho)\,\d \varrho\,\d r
			\\
		& = \int_{s}^{\half} \frac{\d r}{I(r)^2} \int_{0}^{s} f^*_{\pm}(\varrho)\,\d \varrho
			+ \int_{s}^{\half} f^*_{\pm}(\varrho)  \int_{\varrho}^{\half} \frac{\d r}{I(r)^2}\,\d \varrho
			\\
		& = \weight(s) \int_{0}^{s} f^*_{\pm}(\varrho)\,\d \varrho
			+ \int_{s}^{\half} f^*_{\pm}(\varrho) \weight(\varrho)\,\d \varrho
\end{align*}
for $s\in\left(0,\thalf\right]$, inequalities~\eqref{E:pointwise-1} and \eqref{E:pointwise-2} follow from \eqref{E:u-circ-oper-1} and \eqref{E:u-circ-oper-2}, respectively.

Let us now focus on inequality \eqref{E:gradient-rearrangement}.
Without loss of generality, we may assume that $\med(u)=0$.
The function
\begin{equation*}
[0,\infty) \ni	t\mapsto \int_{\{u_{+}\le t\}}|\nabla u_{+}|^{2}\,\dgn
\end{equation*}
is absolutely continuous, since by the coarea formula,
\begin{equation*}
	\int_{\{u_{+}\le t\}}|\nabla u_{+}|^{2}\, \dgn = \int_{\{0\le u\le t\}}|\nabla u_{+}|^{2}\, \dgn
		= \int_{0}^{t} \int_{\{u=\tau\}}|\nabla u|\,\dHg\,\d \tau.
\end{equation*}
Thus,
\begin{equation}\label{E:a-2}
	\int_{\{u_{+}\le t\}}|\nabla u_{+}|^{2}\,\dgn
		= \int_{0}^{t} \biggl(\frac{\d}{\d\tau}\int_{\{u_{+}\le\tau\}}|\nabla u_{+}|^{2}\,\dgn \biggr)\d \tau
	\quad\text{for $t\ge0$.}
\end{equation}
By inequalities~\eqref{E:diff-ineq} and~\eqref{E:HL-f}, with $u$ replaced with  $u_+$,
\begin{equation}\label{E:a-3}
	\frac{\d}{\d\tau}\int_{\{u_{+}\le\tau\}}|\nabla u_{+}|^{2}\,\dgn
		\le \int_{0}^{\mu(\tau)}f^{*}_{-}(s)\,\d s
	\quad\text{for \ae $\tau\ge0$},
\end{equation}
where $\mu(t)$ is as in~\eqref{E:mu-def}.
Note that here we have made use of the fact that the right-hand side of \eqref{E:mu-def} is not altered for $t>0$ if $u$ is replaced with $u_+$.
From inequalities~\eqref{E:a-3} and~\eqref{E:1frac} one infers that
\begin{equation}\label{E:a-4}
	\frac{\d}{\d\tau}\int_{\{u_{+}\le\tau\}}|\nabla u_{+}|^{2}\,\d x
		= - \frac{\d}{\d\tau}\int_{\{u_{+}>\tau\}}|\nabla u_{+}|^{2}\,\d x
		\le \frac{-\mu'(\tau)}{I(\mu(\tau))^{2}}\biggl(\int_{0}^{\mu(\tau)}f^{*}_{-}(s)\,\d s\biggr)^{2}
\end{equation}
for \ae $\tau\ge0$. Combining equations~\eqref{E:a-2} and~\eqref{E:a-4} tells us that
\begin{align} \label{E:a-5}
	\begin{split}
		\int_{\{u_{+}\le t\}}|\nabla u_{+}|^{2}\,\d x
		& \le \int_{0}^{t}\frac{-\mu'(\tau)}{I(\mu(\tau))^{2}}\biggl(\int_{0}^{\mu(\tau)}f^{*}_{-}(s)\,\d s\biggr)^{2}\,\d\tau
		   \le \int_{\mu(t)}^{\half}\frac{1}{I(\sigma)^{2}}\biggl(\int_{0}^{\sigma}f^{*}_{-}(s)\,\d s\biggr)^{2}\,\d \sigma
	\end{split}
\end{align}
for $t\ge0$. On the other hand, by the first inequality in~\eqref{HL},
\begin{align}\label{E:a-6}
	\begin{split}
		\int_{\{u_{+}\le t\}}|\nabla u_{+}|^{2}\,\d x
		& \ge \int_{\mu(t)}^{\half}|\nabla u_{+}|^{*}(s)^{2}\,\d s
			\ge \int_{\mu(t)}^{2\mu(t)}|\nabla u_{+}|^{*}(s)^{2}\,\d s\ge \mu(t)|\nabla u_{+}|^{*}\bigl(2\mu(t)\bigr)^{2}
	\end{split}
\end{align}
for $t\ge t_0$, where we used that $\gamma_n(\{u_+\le t\})=\half-\mu(t)$ and we chose $t_0$ such that $\mu(t_0)=\frac{1}{4}$.
Inequalities~\eqref{E:a-5} and~\eqref{E:a-6} yield
\begin{equation}\label{E:a-6'}
   \mu(t)|\nabla u_{+}|^{*}\bigl(2\mu(t)\bigr)^{2}
    \le \int_{\mu(\sigma)}^{\half}\frac{1}{I(\sigma)^{2}}\biggl(\int_{0}^{\sigma}f^{*}_{-}(s)\,\d s\biggr)^{2}\,\d \sigma,
\end{equation}
whence inequality~\eqref{E:gradient-rearrangement} follows for $u_{+}$.
The argument for $u_{-}$ is analogous.
\end{proof}

\begin{proof}[Proof of Theorem~\ref{T:existence}, Part 1]
\hypertarget{h:s1}Step 1.
Let $\{f_k\}$ be a sequence of functions from $L^{2}_\perp \RG$ such that $f_k\to f$ in $L^{1}\RG$.
We may also assume that
\begin{equation}\label{E:a-7}
    \|f_k\|_{L^{1}\RG} \le 2 \|f\|_{L^{1}\RG}.
\end{equation}
For each $k\in\N$, let $u_{k}\in W^{1,2}\RG$ be the unique solution to the equation
\begin{equation}\label{E:a-7'}
    \lgn u_k = f_k
\end{equation}
such that $\med(u_k)=0$.
Thus
\begin{equation}\label{E:a-8}
    \int_{\rn}\nabla u_k \cdot \nabla v\,\dgn = - \int_{\rn}f_k v\,\dgn
\end{equation}
for every $v \in W^{1,2}\RG$ and $k\in\N$.

\hypertarget{h:s2}Step 2.
We prove that there exists a function $u\in \MM\RG$ such that
\begin{equation}\label{E:a-9}
    u_k\to u
	\quad\text{\ae in $\rn$ (up to subsequences).}
\end{equation}
We shall do this by showing that $\{u_k\}$ is a Cauchy sequence in measure.
To this purpose, observe that, given $t,\tau>0$, one has
\begin{align}\label{E:a-10}
	\gamma_n\left(\left\{|u_k-u_m|>\tau\right\}\right)
		\le \gamma_n\left(\left\{|u_k|>t\right\}\right)
		+ \gamma_n\left(\left\{|u_m|>t\right\}\right)
		+ \gamma_n\left(\left\{|T_{t}(u_k)-T_{t}(u_m)|>\tau\right\}\right)
\end{align}
for every $k,m\in\N$.
Choosing the function $v=T_t(u_k)$ in equation~\eqref{E:a-8} enables us to deduce that
\begin{align}\label{E:a-11}
		\int_{\rn}|\nabla T_t(u_k)|^{2}\,\dgn
			= \int_{\rn}\nabla u_k\cdot \nabla T_t(u_k)\,\dgn
			 = -\int_{\rn}f_kT_t(u_k)\,\dgn
					\le 2t\|f\|_{L^{1}\RG}
\end{align}
for  $k\in\N$.
Set \begin{equation*}
    \mu^{\pm}_{k}(t) = \gamma_n\left(\{(u_k)_{\pm}>t\}\right)
	\quad\text{for $t \ge 0$.}
\end{equation*}
From inequality~\eqref{E:taufrac}, applied to $u_k$, we obtain that
\begin{align}\label{E:a-12}
		t  \le \int_{\mu^{\pm}_{k}(t)}^{\half}\frac{1}{I(s)^{2}}\int_{0}^{s}(f_k)^{*}_{\mp}(r)\,\d r\,\d s
			\le 2\|f\|_{L^{1}\RG} \int_{\mu^{\pm}_{k}(t)}^{\half}\frac{\d s}{I(s)^{2}}
			=  2\|f\|_{L^{1}\RG} \weight\bigl(\mu^{\pm}_{k}(t)\bigr)
\end{align}
for $t>0$.
Hence,
\begin{equation}\label{E:a-13}
  \gamma_n\left(\{|u_k|>t\}\right)
    =  \mu^{+}_{k}(t) + \mu^{-}_{k}(t)
		\le 2\weight^{-1}\biggl(\frac{t}{2\|f\|_{L^{1}\RG}}\biggr)
   \quad\text{for $t>0$.}
\end{equation}
Since $\lim _{t \to \infty}\weight^{-1}(t)=0$, given $\varepsilon >0$, we have that
\begin{equation}\label{E:a-14}
	\gamma_n\left(\{|u_k|>t\}\right)<\varepsilon
	\quad\text{and}\quad
	\gamma_n\left(\{|u_m|>t\}\right)<\varepsilon
   \quad\text{for $k,m\in\N$}
\end{equation}
if $t$ is sufficiently large.
Fix any such $t$.
Since $\med(u_k)=0$, we have that $\med(T_t(u_k))=0$ as well.
Hence, by the Sobolev inequality in the Gauss space, \cf\eg~\citep{Cia:09} and inequality~\eqref{E:a-11},
\begin{equation*}
	\|T_t(u_k)\|_{L^{2}\RG}
		\le c \|\nabla T_t(u_k)\|_{L^{2}\RG}
		\le c \sqrt{2t\|f\|_{L^{1}\RG}}
\end{equation*}
for some constant $c$.
Consequently, the sequence $\{T_t(u_k)\}$ is bounded in $W^{1,2}\RG$.
By the compactness of the embedding~\citep[Theorem~7.3]{Sla:15}
\begin{equation*}
    W^{1,2}\RG \to L^{2}\RG,
\end{equation*}
one has that $T_t(u_k)$ converges to some function $L^{2}\RG$ (up to subsequences).
In particular, $\{T_t(u_k)\}$ is a Cauchy sequence in measure.
Thus,
\begin{equation}\label{E:a-15}
	\gamma_n\left(\{|T_t(u_k)-T_t(u_m)|>\tau\}\right)
		< \varepsilon
\end{equation}
if $k,m$ are sufficiently large.
From~\eqref{E:a-10},~\eqref{E:a-14} and~\eqref{E:a-15}, we conclude that $\{u_k\}$ is a Cauchy sequence in measure.
Hence, equation~\eqref{E:a-9} follows.

\hypertarget{h:s3}Step 3.
We show that
\begin{equation}\label{E:a-16}
    \{\nabla u_k\} \quad\text{is a Cauchy sequence in measure.}
\end{equation}
To this purpose, note that, given $t,\tau,\delta>0$,
\begin{align}\label{E:a-17}
    \begin{split}
        \gamma_n & \left(\{|\nabla u_k - \nabla u_m|>t\}\right)
                \\
            & \le \gamma_n\left(\{|\nabla u_k|>\tau\}\right) + \gamma_n\left(\{|\nabla u_m|>\tau\}\right)
            + \gamma_n\left(\{|u_k-u_m|>\delta\}\right)
                \\
            &\quad + \gamma_n\left(\{|u_k-u_m|\le\delta,\,|\nabla u_k|\le\tau,\, |\nabla u_m|\le\tau,\, |\nabla u_k - \nabla u_m|>t\}\right).
    \end{split}
\end{align}
By estimate~\eqref{E:gradient-rearrangement}, on setting
\begin{equation*}
    \nu_{k}^{\pm}(t) = \gamma_n\left(\{|\nabla (u_k)_{\pm}|>t\}\right),
\end{equation*}
one infers that
\begin{align}\label{E:a-18}
    \begin{split}
        t & \le \left(\frac{2}{\nu_k^{\pm}(t)}\int_{\frac{\nu_{k}^{\pm}(t)}{2}}^{\half}
        \left(\int_{0}^{s}(f_k)^{*}_{\mp}(r)\,\d r\right)^{2}\frac{\d s}{I(s)^{2}}\right)^{\half}
        \le 2\|f\|_{L^{1}\RG}\left(\frac{2}{\nu_k^{\pm}(t)}\weight\left(\frac{\nu_k^{\pm}(t)}{2}\right)\right)^{\half}.
    \end{split}
\end{align}
Thus, if we define $\Psi(s)=\frac{s}{\weight(s)}$, an increasing function such that $\lim_{s\to 0^+}\Psi(s)=0$, we obtain from~\eqref{E:a-18}
\begin{equation}\label{E:a-19}
    \gamma_n\left(\{|\nabla u_k|>t\}\right)
        = \nu_{k}^{+}(t) + \nu_{k}^{-}(t) \le 2 \Psi^{-1}\left(\left(\frac{2\|f\|_{L^{1}\RG}}{t}\right)^{2}\right).
\end{equation}
Hence, we deduce that, given any $\varepsilon >0$,
\begin{equation}\label{E:a-20}
	\gamma_n\left(\{|\nabla u_k|>\tau\}\right)<\varepsilon
		\quad\text{and}\quad
	\gamma_n\left(\{|\nabla u_m|>\tau\}\right)<\varepsilon
	\quad\text{for $k,m\in\N$},
\end{equation}
provided that $\tau$ is sufficiently large.
Fix such a $\tau$.
Define the set
\begin{equation*}
    G=\bigl\{|u_k-u_m|\le\delta,\,
				|\nabla u_k|\le\tau,\,
				|\nabla u_m|\le\tau,\,
				|\nabla u_k-\nabla u_m|>\tau\bigr\}.
\end{equation*}
Then, by~\eqref{E:a-8} again, since $T_{\delta}(u_k-u_m)\in W^{1,2}\RG$,
\begin{align}\label{E:a-21}
	\begin{split}
		t^2 \int_{G}\dgn
			& \le \int_{G}|\nabla u_k-\nabla u_m|^{2}\,\dgn
				\le \int_{\{|u_k-u_m|\le\delta\}}|\nabla u_k-\nabla u_m|^{2}\,\dgn
					\\
			& = \int_{\rn}(\nabla u_k-\nabla u_m)\cdot \nabla \bigl(T_{\delta}(\nabla u_k-\nabla u_m)\bigr)\,\dgn
				= - \int_{\rn}(f_k-f_m)T_{\delta}(u_k-u_m)\,\dgn
                \\
            & \le 4\delta\|f\|_{L^{1}\RG}.
	 \end{split}
\end{align}
Choosing $\delta$ small enough, we get $\gamma_n(G)<\varepsilon$.
From Step~\hyperlink{h:s2}{2} we already know that $\{u_k\}$ is a Cauchy sequence in measure.
Thus,
\begin{equation}\label{E:a-22}
    \gamma_n\left(\{|u_k- u_m|>\delta\}\right)<\varepsilon
\end{equation}
if $k,m$ are large enough.
From~\eqref{E:a-17}, \eqref{E:a-20}, \eqref{E:a-21} and \eqref{E:a-22} we deduce that
\begin{equation*}
    \gamma_n\left(\{|\nabla u_k - \nabla u_m|>\delta\}\right)<\varepsilon
\end{equation*}
if $k,m$ are sufficiently large.
Property~\eqref{E:a-16} is thus established.

\hypertarget{h:s4}Step 4.
We prove that
\begin{equation}\label{E:a-23}
    u\in\TT^{1,2}\RG
\end{equation}
and
\begin{equation}\label{E:a-24}
	\nabla u_k\to\nabla u
		\quad\text{\ae in $\rn$}
\end{equation}
(up to subsequences), where $\nabla u$ denotes the ``surrogate'' gradient $Z_u$ in the sense of definition~\eqref{504}.

By property~\eqref{E:a-16}, there exists a measurable function $U\colon\rn\to\rn$ such that
\begin{equation}\label{E:a-25}
	\nabla u_k\to U
		\quad\text{\ae in $\rn$}
\end{equation}
(up to subsequences).
As shown in Step~\hyperlink{h:s2}{2}, the sequence $\{T_t(u_k)\}$ is bounded in $W^{1,2}\RG$.
Therefore, inasmuch as the latter space is reflexive, there exists a function $\hat u_t\in W^{1,2}\RG$ such that
\begin{equation}\label{E:a-26}
	T_t(u_k)\rightharpoonup \hat u_t
		\quad\text{weakly in $W^{1,2}\RG$,}
\end{equation}
and \ae in $\rn$ (up to subsequences).
Hence, by the uniqueness of the limit,
\begin{equation}\label{E:a-27}
	\hat u_t = T_t(u)
		\quad\text{\ae in $\rn$,}
\end{equation}
since $T_t(u_k)\to T_t(u)$ by~\eqref{E:a-9}.
Consequently, $T_t(u)\in W^{1,2}\RG$ and
\begin{equation}\label{E:a-28}
	T_t(u_k)\rightharpoonup T_t(u)
		\quad\text{weakly in $W^{1,2}\RG$.}
\end{equation}
Thanks to the arbitrariness of $t$, property~\eqref{E:a-23} holds, and
\begin{equation}\label{E:a-29}
	\nabla T_t(u) = \chi_{\{|u|<t\}}\nabla u
		\quad\text{\ae~in $\rn$}
\end{equation}
for $t>0$.
From equations~\eqref{E:a-9} and~\eqref{E:a-25} one also deduces that
\begin{equation}\label{E:a-30}
  \lim_{k\to\infty} \nabla T_t(u_k)
		= \lim_{k\to\infty} \chi_{\{|u_k|<t\}}\nabla u_k
		= \chi_{\{|u|<t\}}U
		\quad\text{\ae in $\rn$}
\end{equation}
for $t>0$, and, owing to~\eqref{E:a-28},
\begin{equation}\label{E:a-31}
  \nabla T_t(u) = \chi_{\{|u|<t\}}U
		\quad\text{\ae in $\rn$,}
\end{equation}
for $t>0$.
By~\eqref{E:a-31},
\begin{equation}\label{E:a-32}
  U=\nabla u.
\end{equation}
Property~\eqref{E:a-24} follows from equations~\eqref{E:a-25} and~\eqref{E:a-32}.

\hypertarget{h:s5}Step 5.
This step is devoted to the proof of the uniqueness of the solution $u$ (up to additive constants) to equation~\eqref{apr6}.
Assume that $u$ and $\hat u$ are solutions to~\eqref{apr6}.
Then there exist sequences $\{f_k\}$ and $\{\hat f_k\}$ in $L^2_\perp\RG$ such that  $f_k\to f$ and $\hat f_k\to\hat f$ in $L^{1}\RG$, the solutions $u_k$ to~\eqref{E:a-7'} satisfy $u_k\to u$ \ae in $\rn$, and the solutions $\hat u_k$ to problem~~\eqref{E:a-7'} with $f_k$ replaced by $\hat f_k$ satisfy $\hat u_k\to \hat u$ \ae in $\rn$.

Given any $t>0$, we make use of the test function $\phi=T_t(u_k-\hat u_k)$ in equation~\eqref{E:a-8}, and in a parallel equation with $u_k$ and $f_k$ replaced by $\hat u_k$ and $\hat f_k$.
On subtracting the equations so obtained one gets that
\begin{equation}\label{E:a-33}
	\int_{\{|u_k-\hat u_k|\le t\}} |\nabla u_k-\nabla\hat u_k|^{2}\,\dgn
    = - \int_{\rn}(f_k-\hat f_k)T_t(u_k-\hat u_k)\,\dgn
\end{equation}
for $k\in\N$.
The right-hand side of equation~\eqref{E:a-33} tends to $0$ as $k\to\infty$, since $|T_t(u_k-\hat u_k)|\le t$ and $f_k-\hat f_k\to 0$ in $L^{1}\RG$.
Moreover, the same arguments as in the proofs of Steps~\hyperlink{h:s3}{3} and \hyperlink{h:s4}{4} ensure that $\nabla u_k\to\nabla u$ and $\nabla \hat u_k\to\nabla\hat u$ \ae in $\rn$ (up to subsequences).
Therefore, on passing to the limit in equation~\eqref{E:a-33} as $k\to\infty$, we infer, via Fatou's lemma, that
\begin{equation*}
	\int_{\{|u-\hat u|\le t\}}|\nabla u-\nabla\hat u|^{2}\,\dgn=0,
\end{equation*}
whence, by the arbitrariness of $t$,
\begin{equation}\label{E:a-34}
	\nabla u = \nabla \hat u
		\quad\text{\ae in $\rn$.}
\end{equation}
Now, observe that, by Step~\hyperlink{h:s4}{4}, given $t,\tau>0$, we have
\begin{equation*}
	T_{\tau}(u-T_t(\hat u))\in W^{1,2}\RG.
\end{equation*}
An application of the Gaussian--Sobolev inequality to this function tells us that
\begin{align}\label{E:a-35}
	\begin{split}
		& \int_{\rn}
			\bigl|
				T_{\tau}(u-T_t(\hat u)) - \med\bigl(T_{\tau}(u-T_t(\hat u))\bigr)
			\bigr|^{2}\,\dgn
        \le c\int_{\rn}
					\bigl|
						\nabla T_{\tau}(u-T_t(\hat u))
					\bigr|^{2} \,\dgn
      \\
    &\quad  = c
			\biggl(
				\int_{\{t<|u|<t+\tau\}}|\nabla u|^{2}\,\dgn
				+ \int_{\{t-\tau<|u|<t\}}|\nabla u|^{2} \,\dgn
			\biggr).
	 \end{split}
\end{align}
Notice that we have also made use of equation~\eqref{E:a-34} in the last equality.
The choice of the test function $\phi=T_{\tau}(u_k-T_t(u_k))$ in equation~\eqref{E:a-8} enables us to obtain that
\begin{equation}\label{E:a-36}
	\int_{\{t<|u_k|<t+\tau\}}|\nabla u_k|^{2}\,\dgn
		\le \tau \int_{\{|u_k>t\}}|f_k|\,\dgn.
\end{equation}
Passing to the limit in~\eqref{E:a-36} and making use of Fatou's lemma tell us that
\begin{equation}\label{E:a-37}
	\int_{\{t<|u|<t+\tau\}}|\nabla u|^{2}\,\dgn
		\le \tau \int_{\{|u>t\}}|f|\,\dgn.
\end{equation}
Thus, the first integral on the rightmost side of inequality~\eqref{E:a-35} approaches $0$ as $t\to\infty$.
A similar argument employing the test functions $\phi=T_{\tau}(u_k-T_{t-\tau}(u_k))$ ensures that also the second integral tends to $0$ as $t\to\infty$.
On the other hand,
\begin{equation*}
	\lim_{t\to\infty}
		\bigl[
			T_{\tau}(u-T_t(\hat u)) - \med\bigl(T_{\tau}(u-T_t(\hat u))\bigr)
		\bigr]
		= T_{\tau}(u-\hat u)-\med\bigl(T_{\tau}(u-\hat u)\bigr)
		\quad\text{\ae in $\rn$.}
\end{equation*}
Therefore, passing to the limit in~\eqref{E:a-35} as $t\to\infty$ and making use of Fatou's lemma yield
\begin{equation*}
  \int_{\rn}
		\bigl|
			T_{\tau}(u-\hat u) - \med\bigl((T_{\tau}(u-\hat u)\bigr)
		\bigr|^{2}\,\dgn = 0
\end{equation*}
for $\tau>0$. Hence,
\begin{equation*}
	T_{\tau}(u-\hat u) - \med\bigl(T_{\tau}(u-\hat u)\bigr) = 0
\end{equation*}
\ae in $\rn$ for every $\tau>0$, and passing to the limit as $\tau\to\infty$, we get that
\begin{equation*}
	u-\hat u = \med(u-\hat u)
		\quad\text{\ae in $\rn$.}
\end{equation*}
Thus, the function $u-\hat u$ is constant on $\rn$.
\end{proof}

\begin{proof}[Proof of Theorem~\ref{T:pointwise-estimates}, completed]
Let $u\in D(\lgn)$.
By definition, there exists a sequence $\{u_k\}\subset\Wlgn L^2\RG$ fulfilling \eqref{apr1} and \eqref{apr2}.
Inequalities \eqref{E:pointwise-1} and \eqref{E:pointwise-2} have already been shown to hold with $u$ replaced by $u_k$.
Namely, on setting $f_k=\lgn u_k$, one has that
\begin{equation} \label{E:pointwise-1k}
	0 \le u_k^\circ(s) - u_k^\circ(\thalf)
		\le \weight(s) \int_{0}^{s} (f_k)^*_-(r)\,\d r
			+ \int_{s}^{\half}(f_k)^*_-(r)\,\weight(r)\,\d r
		\quad\text{for $s\in\left(0,\thalf\right]$}
\end{equation}
and
\begin{equation} \label{E:pointwise-2k}
	0 \le u_k^\circ(\thalf) - u_k^\circ(1-s)
		\le \weight(s) \int_{0}^{s} (f_k)^*_+(r)\,\d r
			+ \int_{s}^{\half}(f_k)^*_+(r)\,\weight(r)\,\d r
		\quad\text{for $s\in\left(0,\thalf\right]$}.
\end{equation}
We claim that $\med(u_k)\to\med(u)$.
By Lemma~\ref{L:med}, it suffices to verify that $u^\circ$ is continuous at~$\half$.
Theorem~\ref{T:existence} ensures that $u\in\TT^{1,2}\RG$ and hence $T_t(u)$ belongs to $W^{1,2}\RG$ and, in particular, $T_t(u)^\circ$ is continuous, see \citep{Ehr:84} or \citep[Lemma~3.3]{Cia:09}.
Therefore, as $T_t(u)^\circ=u^\circ$ on the set $\{|u^\circ|<t\}$ containing $\half$ for sufficiently large $t$, the function $u^\circ$ is also continuous at $\half$.

Property~\eqref{apr1} ensures that
\begin{equation} \label{apr7}
	\bigl(u_k(x) - \med(u_k)\bigr)_{\pm}
		\to \bigl(u(x)- \med(u)\bigr)_{\pm}
	\quad \text{for \ae $x\in\rn$}.
\end{equation}
Fatou's property of rearrangements tells us that
\begin{equation} \label{apr8}
	\liminf_{k\to \infty} \bigl( u_k - \med(u_k) \bigr)_{\pm}^*(s)
		\ge \bigl( u - \med(u) \bigr)_{\pm}^*(s)
	\quad\text{for  $s\in(0,1)$}.
\end{equation}
Moreover,
\begin{equation} \label{apr9}
	\bigl( u_k - \med(u_k) \bigr)_{+}^*(s)
		= u_k^\circ (s) - u_k^\circ(\thalf)
		\quad\text{and}\quad
	\bigl( u_k - \med(u_k) \bigr)_{-}^*(s)
		= u_k^\circ(\thalf) - u_k^\circ (1-s)
\end{equation}
for \ae  $s \in (0,\thalf)$.
Hence \eqref{apr8} and \eqref{apr9} yield
\begin{equation}\label{apr5}
	\liminf_{k\to \infty} u_k^\circ (s) - u_k^\circ(\thalf)
		\ge \bigl( u - \med(u)\bigr)_+^*(s)
	\quad\text{and}\quad
	\liminf_{k\to \infty} u_k^\circ(\thalf) - u_k^\circ (1-s)
		\ge \bigl( u - \med(u)\bigr)_-^*(s)
\end{equation}
for $s\in(0,\thalf)$.
On the other hand, owing to assumption \eqref{apr2}, one has that $(f_k)_\pm \to f_\pm$ in $L^1\RG$.
By \citep[Chapter~3, Theorem~7.4]{Ben:88}, The operation of decreasing rearrangement is a contraction in $L^1$, thus, one also has that $(f_k)_\pm^* \to f_\pm^*$ in $L^1(0,1)$.
As a consequence,
\begin{equation} \label{apr3}
	\lim_{k\to \infty}
			\int_{0}^{s} (f_k)^*_\pm(r)\,\d r
			= \int_{0}^{s} f^*_\pm (r)\,\d r.
\end{equation}
and
\begin{align} \label{apr4}
	\lim_{k\to \infty}
			\int_{s}^{\half}(f_k)^*_\pm (r)\,\weight(r) \d r
			= \int_{s}^{\half}f^*_\pm (r)\,\weight(r) \d r.
\end{align}
for every $s \in (0,\thalf]$.
Inequality~\eqref{E:u-rearrangement} then follows from~\eqref{E:pointwise-1k} and \eqref{E:pointwise-2k} via~\eqref{apr5}, \eqref{apr3} and \eqref{apr4}.

As for estimate \eqref{E:gradient-rearrangement}, it has already been shown to hold if $u\in \Wlgn L^2\RG$.
Assume now that $u\in D(\lgn)$ and set $f=\lgn u$.
Let $\{f_k\}$ is a sequence in $L^2\RG$ such that $\mv(f_k)=0$, $f_k\to f$ in $L^1\RG$ and \eqref{E:a-7} holds.
Let $u_k$ be the solution to equation \eqref{E:a-7'}.
We have that
\begin{equation}\label{20nov10}
	|\nabla (u_k-\med(u_k))_{\pm}|^*(s)
		\le
			\bigg(
				\frac 2s \int_{\frac s2}^{\half}
					\bigg(
						\int_0^r(f_k)^*_{\mp}(\varrho)\,\d \varrho
					\bigg)^2 \frac {\d r}{I(r)^2}
			\bigg)^{\half}
		\quad\text{for $s\in (0,\thalf)$}.
\end{equation}
By equation \eqref{apr3},
\begin{equation*}
	\int_{0}^{r} (f_k)^*_\mp(\varrho)\,\d \varrho
		\to \int_{0}^{r} f^*_\mp (\varrho)\,\d \varrho
	\quad\text{for $r\in (0,1)$}.
\end{equation*}
Moreover, owing to \eqref{E:a-7},
\begin{equation*}
	\int_{0}^{r} (f_k)^*_\pm(\varrho)\,\d \varrho
		\le 2 \|f\|_{L^1\RG}
	\quad\text{for $r\in (0,1)$,}
\end{equation*}
for $k\in\N$. Hence, since the function $I^{-2}$ is integrable in $(s,\half)$ for every $s\in(0,\half)$, by the dominated convergence theorem for Lebesgue integrals,
\begin{equation} \label{20nov11}
	\int_{\frac s2}^{\half}
		\bigg(
		\int_0^r (f_k)^*_{\mp}(\varrho)\,\d \varrho
		\bigg)^2 \frac {\d r}{I(r)^2}
	\to \int_{\frac s2}^{\half}
		\bigg(
			\int_0^r f^*_{\mp}(\varrho)\,\d \varrho
		\bigg)^2 \frac {\d r}{I(r)^2}
	\quad\text{for $s\in (0,\thalf)$}.
\end{equation}
Next, owing to equations \eqref{apr7} and \eqref{E:a-24}, one has that
\begin{equation} \label{20nov12}
	|\nabla  (u_k-\med(u_k))_{\pm}|
		\to | \nabla (u-\med(u))_{\pm}|
	\quad\text{\ae in $\rn$}.
\end{equation}
Hence, via Fatou's property of the decreasing rearrangement, we infer that
\begin{equation} \label{20nov13}
	\liminf_{k\to \infty}
		|\nabla  (u_k-\med(u_k))_{\pm}|^*(s)
			\ge | \nabla (u-\med(u))_{\pm}|^*(s)
	\quad\text{for $s\in (0,1)$.}
\end{equation}
Thanks to properties~\eqref{20nov11} and \eqref{20nov13}, inequality~\eqref{E:gradient-rearrangement} follows on passing to the limit in inequality~\eqref{20nov10} as $k\to\infty$.
\end{proof}

\begin{proof}[Proof of Theorem~\ref{T:existence}: inequalities \eqref{apr20} and \eqref{apr30}]
Since the function $\weight$ is decreasing, inequality~\eqref{E:u-rearrangement} yields
\begin{equation*}
	(u-\med(u))_\pm^*(s)
		\le \weight(s) \int_{0}^{s} f_{\mp}^*(r)\,\d r
			+ \weight(s) \int_{s}^{\half} f_{\mp}^*(r)\,\d r
		\le \weight(s) \|f\|_{L^1\RG}
\end{equation*}
for $s\in(0,\thalf)$. Therefore,
\begin{equation*}
	(u-\med(u))^*(s)
		\le (u-\med(u))_+^*(\tfrac{s}{2}) + (u-\med(u))_-^*(\tfrac{s}{2})
		\le 2\weight(\tfrac{s}{2}) \|f\|_{L^1\RG}
\end{equation*}
for $s\in(0,1)$. Hence,
\begin{equation*}
	\|u-\med(u)\|_{L^{1,\infty}\log L\RG}
		= \sup_{s\in(0,1)} s\ell(s)(u-\med(u))^*(s)
		\le \|f\|_{L^1\RG} \sup_{s\in(0,1)} 2s\ell(s)\weight(\tfrac{s}{2}),
\end{equation*}
where the last supremum is finite because of \eqref{E:weight-limit}.
This proves inequality \eqref{apr20}.

As far as inequality~\eqref{apr30} is concerned, estimate~\eqref{E:gradient-rearrangement} implies
\begin{align*}
	|\nabla (u-\med(u))_{\pm}|^*(s)
		& \le \|f\|_{L^{1}\RG}
        	\bigg(
						\frac 2s \int_{\frac s2}^{\half} \frac {\d r}{I(r)^2}
					\bigg)^{\half}
      = \|f\|_{L^{1}\RG} \sqrt{\tfrac{2}{s}\weight(\tfrac{s}{2})}
\end{align*}
for $s\in (0,\thalf)$.
Thus, by inequality \eqref{gen5},
\begin{align*}
	|\nabla u|^*(s)
		& = |\nabla (u-\med(u))|^*(s)
		= \big|\nabla \big((u-\med(u))_+ + (u-\med(u))_-\big)\big|^*(s)
		\\ & \le |\nabla(u-\med(u))_+|^*(\tfrac{s}{2})
			+ |\nabla(u-\med(u))_-|^*(\tfrac{s}{2})
\end{align*}
for $s\in(0,1)$, whence we infer that
\begin{equation*}
	\|\nabla u\|_{L^{1,\infty}(\log L)^\half\RG}
		= \sup_{s\in(0,1)} s\ell(s)^{\half} |\nabla u|^*(s)
		\le \|f\|_{L^{1}\RG} \sup_{s\in(0,1)}
					2\sqrt{2s\ell(s)\weight(\tfrac{s}{2})},
\end{equation*}
the last supremum being finite  by \eqref{E:weight-limit} again.

We conclude the proof of Theorem~\ref{T:existence} by showing that the norms on the left sides of~\eqref{apr20} and~\eqref{apr30} cannot be replaced by any stronger weak-type norm.

Fix $\delta>0$ and define the function $g_\delta\colon(0,\half)\to[0,\infty)$ by
\begin{equation*}
	g_\delta = \frac{1}{2\delta}\chi_{(0,\delta)}.
\end{equation*}
Also, define the functions $u_\delta\colon\rn\to\R$ and $f_\delta\colon\rn\to\R$ by
\begin{equation*}
	u_\delta(x)
		= \sgn(x_1)\int_{\phaxi}^{\half} \frac{1}{I(s)^2} \int_{0}^{s}g_\delta(r)\,\d r \,\d s
	\quad\text{for $x\in\rn$}
\end{equation*}
and
\begin{equation*}
	f_\delta(x) = -\sgn(x_1)g_\delta(\phaxi)
		\quad\text{for $x\in\rn$.}
\end{equation*}
One can verify that $\lgn u_\delta\in\Wlgn L^2\RG$ and $\lgn u_\delta = f_\delta$ -- see \eg the proof of Theorem~\ref{T:reduction-principle}. Moreover, by a change of variables,
\begin{align*}
	\|f_\delta\|_{L^1\RG}
		& = \int_{\{x_1>0\}} g_\delta(\Phi(x_1))\,\dgn
			+ \int_{\{x_1<0\}} g_\delta(\Phi(-x_1))\,\dgn
			\\
		& = 2 \int_{0}^{\infty} g_\delta(\Phi(x_1))\,\d\gamma_1
			= 2\int_{0}^{\half} g_\delta(s)\,\d s
			= 1,
\end{align*}
whence $f_\delta\in L^1_{\bot}\RG$, since, obviously, $\int_{\rn}f_\delta\,\dgn=0$.
Also $\med(u_\delta)=0$.

Set $E_\delta = \{x\in\rn: \Phi(|x_1|)\le \delta\}$. Then
\begin{equation*}
	|u_\delta(x)|
		= \int_{\phaxi}^{\half} \frac{1}{I(s)^2}\int_{0}^{s}g_\delta(r)\,\d r \,\d s
		\ge \int_{\delta}^{\half} \frac{\d s}{I(s)^2}\int_{0}^{\delta}g_\delta(r)\,\d r
		= \thalf \weight(\delta)
	\quad\text{for $x\in E_\delta$.}
\end{equation*}
Thus, $|u_\delta|\ge \thalf\weight(\delta)$ in $E_\delta$.
Since
\begin{equation*}
	\gamma_n(E_\delta)
		= 2\gamma_n\bigl(\{x\in\rn : x_1>\Phi^{-1}(\delta)\}\bigr)
		= 2\Phi\bigl(\Phi^{-1}(\delta)\bigr)
		= 2\delta,
\end{equation*}
one has that
\begin{equation*}
	u_\delta^* \ge \thalf\weight(\delta) \chi_{(0,2\delta)}.
\end{equation*}
Now, assume  that $\varphi$ is any quasiconcave function satisfying
\begin{equation} \label{jan1}
	\limsup_{s \to 0^+} \frac{\varphi(s)}{s\ell(s)} = \infty.
\end{equation}
Then
\begin{align*}
	\|u_\delta - \med(u_\delta)\|_{m_\varphi\RG}
		= \sup_{s\in(0,1)} u_\delta^{*}(s)\varphi (s)
		\ge \sup_{s\in(0,\delta)} u_\delta^{*}(s)s\ell(s) \frac{\varphi(s)}{s\ell(s)}
		\ge \thalf \weight(\delta)\delta\ell(\delta)
			\sup_{s\in(0,\delta)} \frac{\varphi (s) }{s\ell(s)}.
\end{align*}
In limit as $\delta\to0^+$, the latter term tends to infinity thanks to equation~\eqref{jan1}, since $\delta\ell(\delta)\weight(\delta)\to\thalf$, by equation~\eqref{E:weight-limit}.
The optimality of inequality~\eqref{apr20} is thus established.

Next, we have that
\begin{equation*}
	|\nabla u_\delta(x)|
		= \frac{1}{I\bigl(\phaxi\bigr)} \int_{0}^{\phaxi} \frac{1}{2\delta}\chi_{(0,\delta)}\,\d r
		\quad\text{for \ae $x\in\rn$}.
\end{equation*}
Hence
\begin{equation} \label{jan2}
	|\nabla u_\delta(x)|
		\ge \frac{1}{2\delta}\frac{\phaxi}{I\bigl(\phaxi\bigr)} \chi_{E_\delta}(x)
	\quad\text{for \ae $x\in\rn$}.
\end{equation}
To evaluate the measure of the level sets of the function on the right-hand side of equation~\eqref{jan2}, define the function $G(s)=s/I(s)$ for $s\in(0,\thalf)$.
Computations show that
\begin{align*}
	G'(\Phi(t))
		& = \frac {1}{I(\Phi (t))^2}\frac{1}{\sqrt{2\pi}}
				\bigg(
					e^{-\frac{t^2}2} - t \int_t^\infty e^{-\frac {\tau ^2}2}\,\d\tau
				\bigg)
			\\
		& > \frac {1}{I(\Phi (t))^2} \frac 1{\sqrt{2\pi}}
				\bigg(
					e^{-\frac {t^2}2} - \int_t^\infty \tau e^{-\frac {\tau ^2}2}\,\d\tau
				\bigg)
		= 0
	\quad\text{for $t>0$.}
\end{align*}
Thereby, the function $G$ is strictly increasing.
Since
\begin{align*}
  \gamma_n\left(\left\{x\in E_\delta : \frac{\phaxi}{2\delta I(\phaxi)}>t\right\}\right)
  	& = \gamma_n\left(\left\{x\in\rn : \Phi^{-1}(\delta) \le |x_1| \le \Phi^{-1}\bigl(G^{-1}(2\delta t)\bigr)\right\}\right)
			\\
		& = 2 \gamma_1\biggl(\Bigl(\Phi^{-1}(\delta), \Phi^{-1}\bigl(G^{-1}(2\delta t)\bigr)\Bigr)\biggr)
			= 2\bigl(\delta-G^{-1}(2\delta t)\bigr)
\end{align*}
for $t>0$, one infers that
\begin{align*}
	|\nabla u_\delta|^*(s)
    & \ge \inf\left\{t\in\R :  2\bigl(\delta-G^{-1}(2\delta t)\bigr) \le s\right\}
		= \frac{1}{2\delta} G(\delta - \tfrac{s}{2})
	\quad\text{for $s\in(0,2\delta)$}.
\end{align*}
Consequently, by the monotonicity of $G$,
\begin{align*}
	|\nabla u_\delta|^*
		\ge \frac{1}{2\delta} G\left(\tfrac{\delta}{2}\right) \chi_{(0,\delta)}
		= \frac{1}{4I\!\left(\frac\delta2\right)} \chi_{(0,\delta)}.
\end{align*}
Assume that a quasiconcave function $\varphi$ satisfies
\begin{equation*}
	\limsup_{s\to 0^+} \frac{\varphi(s)}{s\sqrt{\ell(s)}} = \infty.
\end{equation*}
Then, by equation~\eqref{E:I-limit},
\begin{align*}
	\|\nabla u_\delta\|_{m_\varphi\RG}
		& = \sup_{s\in(0,1)} |\nabla u_\delta|^{*}(s)\varphi (s)
    	\ge  \sup_{s\in\left(0,\tfrac\delta 2\right)}|\nabla u_\delta|^{*}(s)s\sqrt{\ell(s)} \frac{\varphi (s) }{s\sqrt{\ell(s)}}
        \\
    & \ge \frac{\frac\delta 2\sqrt{\ell(\frac\delta 2)}}{4I\!\left(\frac{\delta}{2}\right)} \sup_{s\in\left(0,\frac\delta 2\right)} \frac{\varphi (s) }{s\sqrt{\ell(s)}}
      \to \infty
			\quad\text{as $\delta\to0^+$.}
\end{align*}
This shows that the bound given by~\eqref{apr30} is the best possible.
\end{proof}

\begin{proof}[Proof of Proposition \ref{P:classical-sobolev}]
It suffices to show that the solution $u$ to the equation $\lgn u=f$, with $f\in L^{1,1;\half}_\perp\RG$, satisfies
\begin{equation} \label{E:W11-estimate}
	\int_{\rn} |\nabla u|\,\dgn
		\le c \int_{0}^{1} f^*(s)\sqrt{\ell(s)}\,\d s
\end{equation}
for some absolute constant $c$.

Let us first prove estimate~\eqref{E:W11-estimate} in the case when $u\in\Wlgn L^2\RG$. Assume, without loss of generality, that $\med(u)=0$.
As shown in the proof of Theorem~\ref{T:pointwise-estimates}, under these assumptions the function $u$ obeys the following inequality:
\begin{equation} \label{E:W11-estimate-step-1}
	\biggl( -\frac{\d}{\d t} \int_{\{u>t\}} |\nabla u|\,\dgn \biggr)^2
		\le -\mu'(t) \int_{0}^{\mu(t)} f_-^* (s)\,\d s
	\quad\text{for \ae $t>0$},
\end{equation}
where $\mu(t)=\gamma_n(\{x\in\rn:u(x)>t\})$ -- see estimate~\eqref{E:diff-CS}.
Next, by inequality \eqref{E:d-nabla-u-I},
\begin{equation}
	- \frac{\d}{\d t} \int_{\{u>t\}} |\nabla u|\,\dgn
		\ge I\bigl(\mu(t)\bigr)
	\quad\text{for \ae $t>0$}.
\end{equation}
Coupling this inequality with estimate~\eqref{E:W11-estimate-step-1} tells us that
\begin{equation}
	- \frac{\d}{\d t} \int_{\{u>t\}} |\nabla u|\,\dgn
		\le - \frac{\mu'(t)}{I\bigl(\mu(t)\bigr)}
			\int_{0}^{\mu(t)} f_-^* (s)\,\d s
	\quad\text{for \ae $t>0$}.
\end{equation}
An integration over $(0,\infty)$ yields
\begin{align*}
	\int_{\rn} |\nabla u_+|\,\dgn
		& = \int_{\{u>t\}} |\nabla u|\,\dgn
			= \int_0^\infty \biggl( - \frac{\d}{\d t} \int_{\{u>t\}} |\nabla u|\,\dgn \biggr)\d t
			\\
		& \le \int_0^\infty \biggl( - \frac{\mu'(t)}{I\bigl(\mu(t)\bigr)} \int_{0}^{\mu(t)} f_-^* (s)\,\d s\biggr)\d t
			\le \int_0^\half \frac{1}{I(r)} \int_0^r f_-^*(s)\,\d s\,\d r
			\\
		& = \int_0^\half f_-^*(s) \int_s^\half \frac{\d r}{I(r)}\,\d s
			\le  c \int_0^\half f_-^*(s) \sqrt{\ell(s)}\,\d s,
\end{align*}
for some absolute constant $c$.
Notice that here we used the asymptotic behavior of $I$ from~\eqref{E:I-limit}.
Since a parallel estimate holds for $u_-$ in terms of $f_+$, inequality~\eqref{E:W11-estimate} follows.

Now, assume that $f\in L^{1,1;\half}_\perp\RG$ and let $\{f_k\}\subset L^2\RG$ be a sequence such that $f_k\to f$ in $L^{1,1;\half}\RG$ and $\mv(f_k)=0$ for every $k\in\N$.
For instance, we may take $f_k=T_kf-\mv(T_kf)$.
Then, by the uniqueness of the solution to equation \eqref{apr6}, the sequence of functions $\{u_k\}\subset\Wlgn L^2\RG$ such that $\lgn u_k=f_k$ satisfies $u_k\to u$ \ae.
Finally, from Fatou's lemma and estimate~\eqref{E:W11-estimate} applied to  $u_k$ we deduce that
\begin{equation*}
	\|\nabla u\|_{L^1\RG}
		\le \liminf_{k\to\infty} \|\nabla u_k\|_{L^1\RG}
		\le c \lim_{k\to\infty} \|f_k\|_{L^{1,1;\half}\RG}
		= c\|f\|_{L^{1,1;\half}\RG}
\end{equation*}
for some absolute constant $c$.
Inequality~\eqref{E:W11-estimate} is thus established.
\end{proof}

\section{Reduction principle, and optimal target and domain spaces in Ornstein-Uhlenbeck embeddings}\label{main}

The main result of this section is stated in Theorem~\ref{T:reduction-principle}, which asserts that the validity of a Sobolev-type inequality for the Ornstein-Uhlenbeck operator is equivalent to the boundedness of the one-dimensional operator $\sou$ defined as
\begin{equation} \label{E:S-def}
	\sou g(s)
		= \frac{1}{s\ell(s)} \int_{0}^{s} g(r)\,\d r
			+ \int_{s}^{1} \frac{g(r)}{r\ell(r)} \,\d r
    \quad\text{for $s\in(0,1)$,}
\end{equation}
for $g\in\MM_+(0,1)$.
Note that
\begin{equation} \label{E:S-selfadojoint}
	\int_0^{1} \sou g(s) h(s) \,\d s
		= \int_0^{1}g(s) \sou h(s) \,\d s
		\quad\text{for every $g,h\in\MM_+(0,1)$.}
\end{equation}
Therefore $\sou\colon X(0,1)\to Y(0,1)$ if and only if $\sou\colon Y'(0,1)\to X'(0,1)$ for any pair of rearrange\-ment-invariant spaces.

In what follows, we shall write $A \lesssim B$ if there exists a positive constant $c$ independent of appropriate quantities involved in both $A$ and $B$ and such that $A\le cB$.
The symbol $A\gtrsim B$ is then defined in the obvious way.
If both $A\lesssim B$ and $A\gtrsim B$ hold, then we write $A\approx B$.

\begin{theorem}[Reduction principle for Ornstein-Uhlenbeck embeddings] \label{T:reduction-principle}
Let $X\RG $ and $Y\RG$ be rearrangement-invariant spaces.
The following statements are equivalent:
\begin{enumerate}
\item \label{en:full-inequality}
	There exists a constant $c_1>0$ such that
	\begin{equation*}
		\|u - \med(u)\|_{Y\RG}
			\le c_1 \|\lgn u\|_{X\RG}
	\end{equation*}
	for every $u\in\Wlgn X\RG$.
\item \label{en:reduced-inequality}
	There exists a constant $c_2>0$ such that
	\begin{equation} \label{E:reduced-inequality}
		\biggl\|
			\frac{1}{s\ell(s)} \int_{0}^{s} g(r)\,\d r
			+ \int_{s}^{1} \frac{g(r)}{r\ell(r)}\d r
		\biggr\|_{Y(0,1)}
			\le c_2 \|g\|_{X(0,1)}
	\end{equation}
	for every nonnegative function $g\in X(0,1)$.
\end{enumerate}
Moreover, the constants $c_1$ and $c_2$ depend only on each other.
\end{theorem}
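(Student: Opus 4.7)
\medskip
\noindent
\textbf{Proof proposal.}
The plan is to establish the equivalence in both directions. The direction (ii) $\Rightarrow$ (i) follows from the rearrangement estimate of Theorem~\ref{T:pointwise-estimates} combined with hypothesis~(ii). The converse (i) $\Rightarrow$ (ii) requires constructing a sharp one-dimensional test function adapted to the Gaussian isoperimetric profile; the reduction from nonincreasing to arbitrary nonnegative $g$ is then removed via a kernel-monotonicity observation.

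For (ii) $\Rightarrow$ (i), let $u \in \Wlgn X\RG$ and set $f = \lgn u$. The asymptotic~\eqref{E:weight-limit} yields $\weight(r) \lesssim 1/(r\ell(r))$ on $(0,\thalf]$, so estimate~\eqref{E:u-rearrangement} becomes
\[
(u - \med(u))^*_\pm(s) \lesssim \sou(f^*_\mp)(s), \qquad s \in (0, \thalf).
\]
Moreover $(u - \med(u))_\pm^*$ vanishes on $[\thalf, 1)$ since $\{u > \med(u)\}$ has Gauss measure at most $\thalf$. Applying~(ii) with the nonnegative decreasing function $g = f^*_\mp$, invoking the subadditivity~\eqref{gen5} and the boundedness of the dilation $h \mapsto h(\cdot/2)$ on rearrangement-invariant spaces, we arrive at $\|u - \med(u)\|_{Y\RG} \lesssim \|\lgn u\|_{X\RG}$, with constant depending only on $c_2$.

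For (i) $\Rightarrow$ (ii), first assume $g = g^* \ge 0$ and define
\[
f(x) = -\sgn(x_1)\, g\bigl(2\Phi(|x_1|)\bigr), \qquad x \in \rn.
\]
Antisymmetry in $x_1$ gives $\mv(f) = 0$, while a direct distribution-function calculation using the strict monotonicity of $\Phi$ and of $g$ yields $f^*(s) = g(s)$, so $f \in L^1_\perp\RG$ and $\|f\|_{X\RG} = \|g\|_{X(0,1)}$. Let $u$ be the unique $D(\lgn)$-solution to $\lgn u = f$ furnished by Theorem~\ref{T:existence}. Seeking $u(x) = \sgn(x_1)\, v(\Phi(|x_1|))$ and exploiting the identities $\Phi'(t) = -I(\Phi(t))$ and $I'(\Phi(t)) = t$, the PDE reduces to an ODE whose solution is
\[
v(s) = \int_s^{\half} \frac{1}{I(r)^2}\int_0^r g(2\sigma)\,d\sigma\,dr.
\]
Oddness in $x_1$ gives $\med(u) = 0$, and a distribution-function computation yields $u^*(s) = v(s/2)$. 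Fubini, the change of variable $\tau = 2\sigma$, and~\eqref{E:weight-limit} then produce $u^*(s) \gtrsim \sou g(s)$ on $(0,1)$, whence~(i) delivers $\|\sou g\|_{Y(0,1)} \lesssim c_1 \|g\|_{X(0,1)}$. To remove the monotonicity assumption on $g$, observe that the kernel of $\sou$, namely $K(s,r) = \chi_{\{r\le s\}}/(s\ell(s)) + \chi_{\{r>s\}}/(r\ell(r))$, is for each fixed $s$ a nonincreasing function of $r$ on $(0,1)$, because $r \mapsto r\ell(r)$ is strictly increasing there, its derivative being $\log(1/r)$. The Hardy--Littlewood inequality then yields $\sou g(s) \le \sou g^*(s)$ pointwise for every $g \ge 0$, so $\|\sou g\|_{Y(0,1)} \le \|\sou g^*\|_{Y(0,1)} \lesssim \|g\|_{X(0,1)}$, and $c_2$ depends only on $c_1$.

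The main technical obstacle lies in the (i) $\Rightarrow$ (ii) construction: one must solve the one-dimensional Ornstein--Uhlenbeck ODE, verify via Theorem~\ref{T:existence} that the ansatz indeed produces the unique $D(\lgn)$-solution associated with the prescribed right-hand side, compute $u^*$ from its distribution function, and then carry out the Fubini plus change-of-variable manipulation that, combined with $\weight(\tau/2) \approx 1/(\tau\ell(\tau))$, pins $u^*$ to a constant multiple of $\sou g$. Sign tracking throughout the odd extension in $x_1$ and the bookkeeping of dilation factors are the places where mistakes are most likely to creep in.
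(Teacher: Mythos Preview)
Your proposal is correct and follows the same skeleton as the paper: the rearrangement estimate of Theorem~\ref{T:pointwise-estimates} for (ii)$\Rightarrow$(i), and an explicit one-dimensional test function built from the Gaussian profile for (i)$\Rightarrow$(ii). The differences are packaging, not substance.

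First, where you work directly on $(0,1)$ with the rescaled data $g(2\Phi(|x_1|))$, the paper instead passes through an intermediate condition (ii)$'$ on $(0,\thalf)$ with weight $\Theta$, equivalent to (ii) via~\eqref{E:weight-limit} and the dilation operator, and uses $f(x)=-\sgn(x_1)g(\Phi(|x_1|))$. Both routes are valid; yours avoids introducing (ii)$'$ but carries a dilation factor through the rearrangement computation.

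Second, your detour through decreasing $g$ followed by the kernel-monotonicity/Hardy--Littlewood step is correct but unnecessary. The paper handles arbitrary nonnegative $g$ directly: since $x_1\mapsto\Phi(x_1)$ is a measure-preserving bijection from $(\{x_1>0\},\gamma_n)$ to $((0,\thalf),|\cdot|)$, one has $\|f_\pm^*\|_{X(0,\half)}=\|g^*\|_{X(0,\half)}=\|g\|_{X(0,\half)}$ regardless of whether $g$ is monotone, and $(u_+)^*(s)=v(s)$ because $v$ is automatically decreasing (its integrand is nonnegative). So the restriction to $g=g^*$ buys nothing.

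Third, the step you flag in your last paragraph---verifying that the explicit ansatz is the $D(\lgn)$-solution---is indeed required and the paper carries it out: one first takes $g\in X\cap L^2(0,\thalf)$, checks via a Hardy inequality that $|\nabla u|\in L^2\RG$ so that $u\in\Wlgn L^2\RG$, and then for general $g\in X$ approximates monotonically by $L^2$ functions and passes to the limit. Your sketch stops short of this, but the idea is present.
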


Theorem~\ref{T:reduction-principle} enables us to characterize the optimal rearrangement-invariant spaces $X\RG$ and $Y\RG$ in Sobolev-type inequalities of the form
\begin{equation}\label{E:inequality-optimal}
	\|u-\med(u)\|_{Y\RG} \le C\|\lgn u\|_{X\RG}
\end{equation}
for every $u\in\Wlgn X\RG$.
With slight abuse of notation, inequality~\eqref{E:inequality-optimal} will often be written in embedding form as
\begin{equation}\label{E:boundedness-embedding}
	\Wlgn X\RG\to Y\RG.
\end{equation}

Let us begin with the identification of the optimal target space associated with a given domain.
This requires the following preliminary result.

\begin{lemma}\label{optimalfn}
Assume that $\|\cdot\|_{X(0,1)}$ is a rearrangement-invariant function norm such that
\begin{equation} \label{E:target-condition}
  \ell\ell \in X'(0,1).
\end{equation}
Then the functional given by
\begin{equation}\label{E:assoc-of-optimal-target-norm}
	\left\|\sou g^{*}\right\|_{X'(0,1)}
\end{equation}
for $g\in\MM_+(0,1)$ is a rearrangement-invariant function norm.
\end{lemma}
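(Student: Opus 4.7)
The plan is to verify properties (P1)--(P6) of a rearrangement-invariant function norm for the functional
\begin{equation*}
  \|g\|_{Z(0,1)} := \|Sg^{*}\|_{X'(0,1)}, \qquad g \in \Mpl(0,1).
\end{equation*}
Several axioms will be essentially automatic from the corresponding axioms of $\|\cdot\|_{X'(0,1)}$: homogeneity, because $(\lambda g)^{*}=\lambda g^{*}$ and $S$ is linear; positive definiteness, because $Sg^{*}(s)\ge\frac{1}{s\ell(s)}\int_{0}^{s}g^{*}$; monotonicity (P2), since $0\le g\le h$ gives $g^{*}\le h^{*}$ and hence $Sg^{*}\le Sh^{*}$; property (P3), from Fatou's property for the decreasing rearrangement together with monotone convergence inside the two integrals defining $S$; and rearrangement invariance (P6), which is built into the definition.

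The only genuinely delicate point will be the triangle inequality. The operation $g\mapsto g^{*}$ is \emph{not} subadditive, so it is not obvious how to bound $S(g+h)^{*}$ pointwise by $Sg^{*}+Sh^{*}$; the only subadditivity at our disposal, coming from~\eqref{subadd}, is that of $g\mapsto g^{**}$. The key step is therefore to rewrite $Sg^{*}$ in a form that depends on $g$ only through $g^{**}$. Using the identity $g^{*}(r)=\frac{\d}{\d r}(rg^{**}(r))$ and $(r\ell(r))'=\ell(r)-1=-\log r$, integration by parts in the tail integral of $S$ will produce, after cancellation against the first term, the representation
\begin{equation*}
  Sg^{*}(s) = g^{**}(1) + \int_{s}^{1} g^{**}(r)\,\frac{-\log r}{r\,\ell(r)^{2}}\,\d r \quad\text{for $s\in(0,1)$.}
\end{equation*}
Since $-\log r\ge 0$ on $(0,1)$ and $(g_{1}+g_{2})^{**}\le g_{1}^{**}+g_{2}^{**}$, this representation is subadditive in $g$, yielding $S(g_{1}+g_{2})^{*}\le Sg_{1}^{*}+Sg_{2}^{*}$ pointwise; the triangle inequality for $\|\cdot\|_{X'(0,1)}$ will then complete (P1).

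The remaining two axioms rely on the standing hypothesis~\eqref{E:target-condition}. For (P4), take $E\subset(0,1)$ with $|E|=t$ and compute directly, using $\int_{s}^{1}\frac{\d r}{r\ell(r)}=\log\ell(s)$ (obtained via the substitution $u=\ell(r)$):
\begin{equation*}
  S\chi_{E}^{*}(s) = \begin{cases} \dfrac{t}{s\ell(s)}+\log\ell(t), & \text{if $s\ge t$,} \\[2pt] \dfrac{1}{\ell(s)}+\log\ell(s)-\log\ell(t), & \text{if $s<t$.} \end{cases}
\end{equation*}
The identity $\log\ell(s)=\ell\ell(s)-1$ then yields $S\chi_{E}^{*}(s)\le c_{t}(\ell\ell(s)+1)$, whose $X'$-norm is finite by~\eqref{E:target-condition} together with (P1)--(P4) for $X'$. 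For (P5), the displayed representation of $Sg^{*}$ gives the uniform lower bound $Sg^{*}(s)\ge g^{**}(1)=\|g\|_{L^{1}(0,1)}$; taking $X'$-norms yields $\|g\|_{L^{1}(0,1)}\,\varphi_{X'}(1)\le \|g\|_{Z(0,1)}$, and hence $\int_{E}g\le\|g\|_{L^{1}(0,1)}\le\varphi_{X'}(1)^{-1}\|g\|_{Z(0,1)}$ for every $E\subset(0,1)$. The main obstacle, as signalled, is thus the triangle inequality together with the discovery of the $g^{**}$-representation of $Sg^{*}$; once that is in place, the other verifications are essentially mechanical.
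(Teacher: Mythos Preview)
Your argument is correct. The route you take for the triangle inequality, however, differs from the paper's. You integrate by parts to obtain the representation
\[
  Sg^{*}(s)=g^{**}(1)+\int_{s}^{1}g^{**}(r)\,\frac{-\log r}{r\,\ell(r)^{2}}\,\d r,
\]
and then invoke the subadditivity of $g\mapsto g^{**}$. The paper instead observes that $S$ has the kernel form
\[
  Sg(s)=\int_{0}^{1}g(r)\min\Bigl\{\tfrac{1}{s\ell(s)},\tfrac{1}{r\ell(r)}\Bigr\}\,\d r,
\]
notes that the kernel is nonincreasing in $r$ for each fixed $s$, and then applies Hardy's lemma together with~\eqref{subadd} to get $S(g_{1}+g_{2})^{*}\le Sg_{1}^{*}+Sg_{2}^{*}$ directly. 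The two approaches are close cousins---your integration by parts is essentially an explicit unwinding of what Hardy's lemma does abstractly---but they are presented rather differently. Your explicit $g^{**}$-representation has the pleasant side effect of yielding (P5) in one line via $Sg^{*}(s)\ge g^{**}(1)=\|g\|_{L^{1}(0,1)}$; the paper instead bounds $\|g\|_{\xsd(0,1)}$ from below by restricting to $(\tfrac12,1)$ and using $\frac{1}{s\ell(s)}\ge 1$ there.

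One minor slip: in your (P4) computation, the case $s\ge t$ should read simply $S\chi_{E}^{*}(s)=\frac{t}{s\ell(s)}$, without the extra $+\log\ell(t)$ term (for $r>s\ge t$ the integrand $\chi_{(0,t)}(r)/r\ell(r)$ vanishes). This does not affect your conclusion, since the bound $S\chi_{E}^{*}(s)\lesssim \ell\ell(s)+1$ holds either way.
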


Denote by $\|\cdot\|_{\xs(0,1)}$ the rearrangement-invariant function norm whose associate norm is given by
\begin{equation}\label{july1}
	\|g\|_{\xs'(0,1)} = \left\|\sou g^{*}\right\|_{X'(0,1)}
\end{equation}
for $g\in\MM_+(0,1)$.

\begin{theorem}[Optimal target for Ornstein-Uhlenbeck embeddings] \label{T:OU-target}
Let $X\RG$ be a rearrangement-invariant space satisfying condition~\eqref{E:target-condition} and let $\xs\RG$ be the rearrangement-invariant space defined via equation \eqref{july1}.
Then
\begin{equation}\label{E:boundedness-embedding-optimal}
	\Wlgn X\RG\to \xs\RG.
\end{equation}
Moreover, $\xs\RG$ is the optimal (smallest) rearrangement-invariant space for which embedding~\eqref{E:boundedness-embedding-optimal} holds.

If condition~\eqref{E:target-condition} is not satisfied, then embedding~\eqref{E:boundedness-embedding} fails for every rearrangement-invariant space $Y\RG$.
\end{theorem}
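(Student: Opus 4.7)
The plan is to convert the Sobolev-type embedding into a one-dimensional operator bound via the reduction principle (Theorem~\ref{T:reduction-principle}) and then exploit the self-adjointness of $\sou$ together with a pointwise rearrangement comparison. First I would invoke Lemma~\ref{optimalfn} to confirm that, under~\eqref{E:target-condition}, the functional defined in~\eqref{july1} is a genuine rearrangement-invariant function norm, so that $\xs\RG$ is a legitimate r.i. space. Theorem~\ref{T:reduction-principle} then equates $\Wlgn X\RG\to\xs\RG$ with the boundedness $\sou\colon X(0,1)\to\xs(0,1)$; combining~\eqref{E:S-selfadojoint} with~\eqref{E:novabis} shows this is equivalent to $\sou\colon\xsd(0,1)\to X'(0,1)$, which, in view of~\eqref{july1}, amounts to
\[
	\|\sou h\|_{X'(0,1)} \lesssim \|\sou h^{*}\|_{X'(0,1)}
	\qquad\text{for every } h\in\Mpl(0,1).
\]
I expect the main obstacle to be a pointwise comparison $\sou h(s)\lesssim \sou h^{*}(s)$ on $(0,1)$. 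The first summand of $\sou h(s)$ is dominated by its $h^{*}$-analogue via $\int_{0}^{s} h\le \int_{0}^{s} h^{*}$. For the second summand I would apply the Hardy-Littlewood inequality to the weight $w_{s}(r)=\chi_{(s,1)}(r)/(r\ell(r))$; since $r\ell(r)$ is non-decreasing on $(0,1)$, the decreasing rearrangement of $w_{s}$ on $(0,1-s)$ equals $u\mapsto 1/((u+s)\ell(u+s))$, and the bounds $(u+s)\ell(u+s)\approx s\ell(s)$ for $u\le s$ together with $(u+s)\ell(u+s)\ge u\ell(u)$ for $u\ge s$ (splitting at $u=s$) reassemble everything into $\sou h^{*}(s)$. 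The resulting pointwise inequality, combined with the monotonicity of the decreasing rearrangement and the rearrangement-invariance of $X'(0,1)$, then delivers the displayed bound.

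Optimality is short once the reduction principle is in place. If $Y\RG$ is another r.i.\ space for which~\eqref{E:boundedness-embedding} holds, Theorem~\ref{T:reduction-principle} supplies $\sou\colon X(0,1)\to Y(0,1)$, hence $\sou\colon Y'(0,1)\to X'(0,1)$ by~\eqref{E:novabis}. Evaluating this bound at the non-increasing rearrangement $h^{*}$ of an arbitrary nonnegative $h\in Y'(0,1)$ yields
\[
	\|h\|_{\xsd(0,1)}
		= \|\sou h^{*}\|_{X'(0,1)}
		\lesssim \|h^{*}\|_{Y'(0,1)}
		= \|h\|_{Y'(0,1)},
\]
which is the embedding $Y'(0,1)\to\xsd(0,1)$, equivalently $\xs\RG\to Y\RG$.

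For the necessity of~\eqref{E:target-condition} I would argue contrapositively. Assuming $\Wlgn X\RG\to Y\RG$ for some r.i.\ space $Y$, compose $\sou\colon X(0,1)\to Y(0,1)$ from Theorem~\ref{T:reduction-principle} with the automatic embedding $Y(0,1)\to L^{1}(0,1)$ (furnished by property~\ref{en:p5} applied with $E=(0,1)$) to obtain $\sou\colon X(0,1)\to L^{1}(0,1)$. A direct Fubini interchange then evaluates
\[
	\int_{0}^{1} \sou g(s)\,\d s
		= \int_{0}^{1} g(r)\Bigl[\log\ell(r)+\tfrac{1}{\ell(r)}\Bigr]\d r
		\qquad\text{for } g\in\Mpl(0,1),
\]
and since the bracketed weight is equivalent to $\ell\ell(r)$ on $(0,1)$, the duality pairing with $X(0,1)$ forces $\ell\ell\in X'(0,1)$, which is precisely~\eqref{E:target-condition}. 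The technical heart of the whole argument is the pointwise bound in the first paragraph; everything else is self-adjointness bookkeeping.
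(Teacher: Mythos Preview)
Your proof is correct and follows the same overall architecture as the paper (reduction principle, self-adjointness, associate-space optimality), but the key pointwise step $\sou h\le\sou h^{*}$ is established differently. You split $\sou h$ into its two summands and handle the tail integral by computing the rearrangement of the truncated weight $\chi_{(s,1)}(r)/(r\ell(r))$, then splitting the resulting integral at $u=s$; this works because $r\ell(r)$ is increasing on $(0,1)$, but it involves a case analysis. The paper instead observes once (equation~\eqref{E:kernel}) that
\[
	\sou g(s)=\int_{0}^{1}g(r)\min\Bigl\{\tfrac{1}{s\ell(s)},\tfrac{1}{r\ell(r)}\Bigr\}\d r,
\]
and since the kernel is non-increasing in $r$ for each fixed $s$, a single application of the Hardy--Littlewood inequality yields $\sou g\le\sou g^{*}$ pointwise, with no splitting. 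Your argument buys nothing extra here; the kernel form is strictly simpler and also clarifies why the subadditivity of $g\mapsto\sou g^{*}$ holds (via Hardy's lemma), which you implicitly rely on through Lemma~\ref{optimalfn}.

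Your necessity argument is essentially dual to the paper's: you pass from $\sou\colon X\to Y$ to $\sou\colon X\to L^{1}$ and read off $\ell\ell\in X'$ from the pairing, whereas the paper passes to $\sou\colon Y'\to X'$ and tests with $\chi_{(0,1)}$. Since $\int_{0}^{1}\sou g=\int_{0}^{1}g\,\sou\chi_{(0,1)}$ by~\eqref{E:S-selfadojoint}, the two computations coincide; both correctly identify the weight $\log\ell(r)+1/\ell(r)\approx\ell\ell(r)$.
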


A characterization of the optimal domain space in inequality~\eqref{E:inequality-optimal} is the subject of Theorem~\ref{C:OU-domain} below and requires the next lemma.

\begin{lemma}\label{optimaldm}
Assume that $\|\cdot\|_{Y(0,1)}$ is a rearrangement-invariant function norm such that
\begin{equation} \label{E:domain-condition}
 \ell\ell\in Y(0,1).
\end{equation}
Then the functional $\|\cdot\|_{\ys(0,1)}$ given by
\begin{equation}\label{july5}
	\|g\|_{\ys(0,1)}=\left\|\sou g^{*}\right\|_{Y(0,1)}
\end{equation}
for $g\in\MM_+(0,1)$ is a rearrangement-invariant function norm.
\end{lemma}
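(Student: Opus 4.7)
The plan is to verify axioms (P1)--(P6) for the functional $\|g\|_{\ys(0,1)} := \|\sou g^{*}\|_{Y(0,1)}$. The key structural observation underlying the whole argument is that $\sou$ admits the kernel representation
\begin{equation*}
\sou g(s) = \int_{0}^{1} K(s,r)\, g(r)\,\d r, \qquad K(s,r) = \frac{1}{\max(s,r)\,\ell(\max(s,r))},
\end{equation*}
so its kernel is symmetric (accounting for the self-adjointness~\eqref{E:S-selfadojoint}), positive, and non-increasing in each variable separately. In particular $\sou g$ is non-increasing on $(0,1)$ for every $g \in \Mpl(0,1)$, a property used repeatedly below.

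Axioms (P2), (P3), (P6) together with the homogeneity and nondegeneracy parts of (P1) are routine: monotonicity propagates through $g \mapsto g^{*}$, through $\sou$, and through the $Y$-norm; the Fatou property for $Y$ combined with monotone convergence inside the kernel yields (P3); (P6) is immediate since the functional depends only on $g^{*}$. For (P4), note that $\chi_E^{*} = \chi_{(0,|E|)}$ and a direct computation with the explicit form of $\sou$ produces, for every $t\in(0,1)$, a constant $c_t$ such that $\sou \chi_{(0,t)}(s) \le c_t\,(1 + \ell\ell(s))$ on $(0,1)$; the hypothesis $\ell\ell \in Y(0,1)$ and (P4) for $Y$ then yield finiteness. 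For (P5), since $\sou g^{*}$ is non-increasing, one has $\sou g^{*}(s) \ge \sou g^{*}(|E|) \ge g^{**}(|E|)/\ell(|E|)$ for $s \le |E|$; combining this with (P5) for $Y$ applied on $(0,|E|)$ and the Hardy--Littlewood inequality~\eqref{HL} gives $\int_E g \le |E|\,g^{**}(|E|) \le c_E \|\sou g^{*}\|_{Y(0,1)}$ with $c_E$ depending only on $|E|$.

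The main obstacle is the triangle inequality in (P1). A pointwise bound $\sou (g+h)^{*} \le \sou g^{*} + \sou h^{*}$ is unavailable, since $(g+h)^{*} \not\le g^{*} + h^{*}$ in general, so I argue by duality. Using $Y = (Y')'$ together with the Hardy--Littlewood inequality~\eqref{HL} (applicable because $\sou g^{*}$ is non-increasing) and the self-adjointness~\eqref{E:S-selfadojoint}, one obtains the representation
\begin{equation*}
\|\sou g^{*}\|_{Y(0,1)} = \sup \bigg\{ \int_{0}^{1} g^{*}(s) \, \sou k^{*}(s)\, \d s : k \in \Mpl(0,1),\ \|k\|_{Y'(0,1)} \le 1 \bigg\}.
\end{equation*}
For each admissible $k$ the function $\sou k^{*}$ is non-increasing, so the cumulative subadditivity~\eqref{subadd}, namely $(g+h)^{**} \le g^{**} + h^{**}$, combined with Hardy's lemma applied with the non-increasing weight $\sou k^{*}$, gives
\begin{equation*}
\int_{0}^{1} (g+h)^{*}(s) \, \sou k^{*}(s)\, \d s \le \int_{0}^{1} \bigl(g^{*}(s) + h^{*}(s)\bigr)\, \sou k^{*}(s)\, \d s.
\end{equation*}
Taking the supremum over $k$ yields $\|\sou(g+h)^{*}\|_{Y(0,1)} \le \|\sou g^{*}\|_{Y(0,1)} + \|\sou h^{*}\|_{Y(0,1)}$, completing (P1). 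What unlocks the argument is the structural fact that $\sou$ always produces non-increasing output: it is this that permits the duality supremum to be restricted to non-increasing test functions, which is precisely the hypothesis needed to invoke Hardy's lemma in the final step.
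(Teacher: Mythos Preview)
Your proof is correct, but your claim that ``a pointwise bound $\sou (g+h)^{*} \le \sou g^{*} + \sou h^{*}$ is unavailable'' is mistaken, and this leads you on an unnecessary detour. You yourself recorded that the kernel $K(s,r)=1/\bigl(\max(s,r)\ell(\max(s,r))\bigr)$ is non-increasing in $r$ for each fixed $s$. Combining this with \eqref{subadd} and Hardy's lemma (applied with $h=K(s,\cdot)$) gives directly
\[
\sou(g+h)^{*}(s)=\int_0^1 (g+h)^{*}(r)K(s,r)\,\d r \le \int_0^1 \bigl(g^{*}(r)+h^{*}(r)\bigr)K(s,r)\,\d r = \sou g^{*}(s)+\sou h^{*}(s),
\]
so subadditivity holds pointwise and the triangle inequality for $\|\cdot\|_{\ys(0,1)}$ follows immediately from that of $\|\cdot\|_{Y(0,1)}$. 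This is precisely the argument in the paper's proof of Lemma~\ref{optimalfn}; the paper then disposes of Lemma~\ref{optimaldm} in one line by observing that the two statements are identical upon replacing $X'$ by $Y$.

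Your duality route---rewriting $\|\sou g^{*}\|_{Y}$ as a supremum of $\int g^{*}\,\sou k^{*}$ via self-adjointness and then invoking Hardy's lemma against the non-increasing weight $\sou k^{*}$---is sound, but it applies Hardy's lemma at exactly the same depth as the direct argument, just after an extra layer of duality. Nothing is gained: the ``structural fact that $\sou$ always produces non-increasing output'' which you highlight as the key is a consequence of the very same monotonicity of $K(s,\cdot)$ that already delivers the pointwise bound.
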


\begin{theorem}[Optimal domain for Ornstein-Uhlenbeck embeddings]\label{C:OU-domain}
Let $Y\RG$ be a rearrangement-invariant space satisfying condition \eqref{E:domain-condition} and let $ \ys\RG$ be the rearrangement-invariant space defined via equation \eqref{july5}.
Then
\begin{equation}\label{E:boundedness-embedding-optimal-domain}
	\Wlgn \ys\RG\to Y\RG.
\end{equation}
Moreover, $\ys\RG$ is the optimal (largest) rearrangement-invariant space for which embedding~\eqref{E:boundedness-embedding-optimal-domain} holds.

If condition~\eqref{E:domain-condition} is not satisfied, then embedding~\eqref{E:boundedness-embedding} fails for every rearrangement-invariant space $X\RG$.
\end{theorem}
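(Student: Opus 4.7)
The approach is to derive Theorem~\ref{C:OU-domain} from the already-established Theorem~\ref{T:OU-target} by a duality argument whose backbone is the self-adjointness of the one-dimensional operator $\sou$ recorded in~\eqref{E:S-selfadojoint}. First I would combine Theorem~\ref{T:reduction-principle} with the general duality~\eqref{E:novabis} applied to $T=T'=\sou$ to obtain, for any rearrangement-invariant spaces $X\RG$ and $Y\RG$, the symmetric chain of equivalences
\[
	\Wlgn X\RG \to Y\RG
	 \iff \sou\colon X(0,1)\to Y(0,1)
	 \iff \sou\colon Y'(0,1)\to X'(0,1)
	 \iff \Wlgn Y'\RG \to X'\RG,
\]
with fully quantitative control of constants. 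This exchange swaps the optimal-domain problem for the target $Y$ into the optimal-target problem for the domain $Y'$. Moreover, by~\eqref{X''} the hypothesis $\ell\ell\in Y(0,1)$ of~\eqref{E:domain-condition} is precisely the hypothesis $\ell\ell\in(Y')'(0,1)$ needed to apply Theorem~\ref{T:OU-target} to~$Y'$.

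Next I would invoke Theorem~\ref{T:OU-target} for $Y'$, producing an optimal (smallest) target rearrangement-invariant space, call it $Z\RG$, whose associate norm is, by definition,
\[
	\|g\|_{Z'(0,1)}
		= \|\sou g^*\|_{(Y')'(0,1)}
		= \|\sou g^*\|_{Y(0,1)}
		= \|g\|_{\ys(0,1)}.
\]
Hence $Z'=\ys$ and thus $Z=(\ys)'$ by~\eqref{X''}. The embedding $\Wlgn Y'\RG\to(\ys)'\RG$ furnished by Theorem~\ref{T:OU-target} dualizes, via the chain above, to $\Wlgn\ys\RG\to Y\RG$, which is the embedding asserted in the theorem. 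Optimality of $\ys\RG$ is then immediate: if any rearrangement-invariant $X\RG$ satisfies $\Wlgn X\RG\to Y\RG$, the duality yields $\Wlgn Y'\RG\to X'\RG$, whence the smallest-target property of $Z=(\ys)'$ forces $(\ys)'\RG\to X'\RG$; passing to associate spaces reverses the inclusion into $X\RG\to\ys\RG$, which is the largest-domain property claimed.

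The final negative assertion is handled by the same duality: if some rearrangement-invariant $X\RG$ made $\Wlgn X\RG\to Y\RG$ hold, then $\Wlgn Y'\RG\to X'\RG$ would hold too, and the concluding sentence of Theorem~\ref{T:OU-target} applied to $Y'$ would force $\ell\ell\in(Y')'(0,1)=Y(0,1)$, contradicting the assumed failure of~\eqref{E:domain-condition}.

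The only real point of care is book-keeping of constants and of the ``up to equivalent norms'' convention used in the associate-space manipulations above. Because the genuine analytic content -- the rearrangement-type boundedness of the Calder\'on operator $\sou$ and the well-posedness of the candidate norms -- has already been discharged in Theorem~\ref{T:OU-target} and Lemma~\ref{optimaldm}, no new rearrangement estimates are needed here; the argument is essentially a clean transcription, through self-adjoint duality, of the target theorem into its domain counterpart.
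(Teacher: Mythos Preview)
Your proposal is correct and follows essentially the same approach as the paper: both derive the domain theorem from Theorem~\ref{T:OU-target} via the self-adjointness of $\sou$ and Theorem~\ref{T:reduction-principle}. The paper's sketched proof simply says ``it follows from Theorem~\ref{T:OU-target} and its proof that $\sou\colon\ys(0,1)\to Y(0,1)$ and that $\ys(0,1)$ is the largest such space,'' leaving the duality implicit, whereas you have made explicit the identification $(Y')_{\lgn}=(\ys)'$ and the passage through $\Wlgn Y'\RG\to X'\RG$; the negative assertion is handled identically in both arguments.
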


The remaining part of this section is devoted to the proofs of the results stated above.

\begin{proof}[Proof of Theorem \ref{T:reduction-principle}]
First note that \ref{en:reduced-inequality} is equivalent to the following condition:
\begin{enumerate}[label={\rm(\roman*)'}]
\setcounter{enumi}{1}
\item \label{en:reduced-inequality-half}
	There exists a constant $c_2'>0$ such that
	\begin{equation*}
		\biggl\|
			\weight(s) \int_{0}^{s} g(r)\,\d r
			+ \int_{s}^{\half} g(r)\weight(r)\,\d r
		\biggr\|_{Y(0,\half)}
			\le c_2' \|g\|_{X(0,\half)}
	\end{equation*}
	for every nonnegative $g\in X(0,\half)$.
\end{enumerate}
Furthermore, $c_2'$ and $c_2$ depend only on each other.
This follows by a standard argument in rearrangement-invariant spaces involving rearrangements and the dilatation operator together with the fact that, by equation \eqref{E:weight-limit}, the function $\weight(s)$ is equivalent to $1/s\ell(s)$ near zero.

Let us show that~\ref{en:reduced-inequality-half} implies~\ref{en:full-inequality}.
Assume that $u\in\Wlgn X\RG$.
By Theorem~\ref{T:pointwise-estimates},
\begin{align*}
	\|(u - \med(u))_+\|_{Y\RG}
		& = \|(u - \med(u))_+^*\|_{Y(0,\half)}
			\\
		& \le \biggl\|
						\weight(s) \int_{0}^{s} (\lgn u)_-^*(r)\,\d r
							+ \int_{s}^{\half} (\lgn u)_-^*(r)\weight(r)\,\d r
					\biggr\|_{Y(0,\half)}
			\\
		& \le c_2' \|(\lgn u)_-^*\|_{X(0,\half)}
			\le c_2' \|\lgn u\|_{X\RG}
\end{align*}
and, analogously, $\|(u - \med(u))_-\|_{Y\RG}\le c_2' \|\lgn u\|_{X\RG}$.
Thus,
\begin{equation*}
	\|u - \med(u)\|_{Y\RG}
		\le \|(u - \med(u))_+\|_{Y\RG} + \|(u - \med(u))_-\|_{Y\RG}
		\le 2c_2' \|\lgn u\|_{X\RG},
\end{equation*}
whence inequality \ref{en:full-inequality} follows.

Conversely, assume that inequality \ref{en:full-inequality} holds and let $g$ be a nonnegative function in $X(0,\half)\cap L^2(0,\half)$.
Define the function $u\colon\rn\to\R$ as
\begin{equation} \label{E:u-def}
	u(x) = \sgn(x_1) \int_{\phaxi}^{\frac 12} \frac{1}{I(s)^2} \int_{0}^{s} g(r)\,\d r\d s
		\quad\text{for $x\in\rn$}.
\end{equation}
Then $u$ is weakly differentiable and, thanks to equation~\eqref{derivative},
\begin{equation*}
	\frac{\partial u}{\partial x_1}(x)
		= \frac{1}{I\bigl(\phaxi\bigr)} \int_{0}^{\phaxi} g(r)\,\d r
		\quad\text{for \ae $x\in\rn$}
\end{equation*}
and $\displaystyle \frac{\partial u}{\partial x_j}=0$ for $j=2,\ldots,n$.
Consequently,
\begin{equation}\label{E:gradient}
	|\nabla u(x)| = \frac{1}{I\bigl(\phaxi\bigr)} \int_{0}^{\phaxi} g(r)\,\d r
		\quad\text{for \ae $x\in\rn$}
\end{equation}
and
\begin{align*}
	\|\nabla u\|_{L^2\RG}
		&	= \biggl\| \frac{1}{I\bigl(\phaxi\bigr)} \int_{0}^{\phaxi} g(s)\,\d s \biggr\|_{L^2\RG}
			= 2 \biggl\| \frac{1}{I(s)} \int_{0}^{s} g(r)\,\d r \biggr\|_{L^2(0,\half)}
			\\
		& \le c \biggl\| \frac{1}{s\sqrt{\ell(s)}} \int_{0}^{s} g(r)\,\d r \biggr\|_{L^2(0,\half)}
			\le c \biggl\| \frac{1}{\sqrt{\ell(s)}} g(s) \biggr\|_{L^2(0,\half)}
			\le c \|g\|_{L^2(0,\half)}
			< \infty
\end{align*}
for some absolute constant $c$.
Hence, $u\in W^{1,2}\RG$.
Here, we used the asymptotic behaviour of $I$ from \eqref{E:I-limit} and the Hardy inequality \citep[Theorem~1.3.2.2]{Maz:11}.
Therefore $u$ satisfies equation~\eqref{E:weak-formulation} with
\begin{equation} \label{E:f-def}
	f(x) = - \sgn(x_1) g\bigl(\phaxi\bigr)
		\quad\text{for $x\in\rn$}
\end{equation}
for any $v\in W^{1,2}\RG$, as an integration by parts shows.
Thus $\lgn u\in\Wlgn L^2\RG$ and $\lgn u = f$.
We have that
\begin{equation} \label{E:lgnu-le-g}
	\|\lgn u\|_{X\RG}
		\le \|f_+^*\|_{X(0,\half)} + \|f_-^*\|_{X(0,\half)}
		= 2 \|g\|_{X(0,\half)}.
\end{equation}
Moreover, inasmuch  as $\med(u)=0$,
\begin{align} \label{E:u-ge-Sg}
	\begin{split}
	& \|u\|_{Y\RG}
			\ge \|u_+\|_{Y\RG}
			= \biggl\|
					\chi_{\{x_1>0\}} \int_{\Phi(x_1)}^\half
						\frac{1}{I(s)^2} \int_{0}^{s} g(r)\,\d r\d s
				\biggr\|_{Y\RG}
				\\
	&\quad  = \biggl\|
					\int_{s}^\half
						\frac{1}{I(r)^2} \int_{0}^{r} g(\varrho)\,\d \varrho\d r
				\biggr\|_{Y(0,\half)}
			= \biggl\|
					\weight(s) \int_{0}^{s} g(r)\,\d r
						+ \int_{s}^\half g(r)\weight(r)\,\d r
				\biggr\|_{Y(0,\half)}.
	\end{split}
\end{align}
Thus inequality \ref{en:reduced-inequality-half} follows, with $c_2'=2c_1$, via equations \ref{en:full-inequality}, \eqref{E:lgnu-le-g} and \eqref{E:u-ge-Sg}.
Next, assume that the nonnegative function $g$ belongs to $X(0,\half)$.
Since, in particular, $g\in L^1(0,\half)$, there exists a sequence of nonnegative functions $g_k\in L^2(0,\half)$ such that $g_k\uparrow g$ in $L^1(0,\half)$.
If we define $u_k\colon\rn\to\R$ as in \eqref{E:u-def}, with $g$ replaced by $g_k$, then, as shown above, $u_k\in\Wlgn L^2\RG$ and $\lgn u_k=f_k$, where $f_k(x)=-\sgn(x_1)g_k(\phaxi)$ for $x\in\rn$.
Hence, $\lim_{k\to\infty}u_k(x)$ exists for \ae $x\in\rn$ and the limiting function, $u(x)$ say, obeys the representation formula \eqref{E:u-def}.
Also $\med(u)=0$, and $f_k\to f$ in $L^1\RG$, where $f$ is as in \eqref{E:f-def}.
Therefore, $u\in D(\lgn)$ and $\lgn u=f$.
Inequality \ref{en:reduced-inequality-half} then follows again by \eqref{E:lgnu-le-g} and \eqref{E:u-ge-Sg}.
\end{proof}

\begin{proof}[Proof of Lemma \ref{optimalfn}]
The rearrangement invariance of the functional $\|\cdot\|_{\xsd(0,1)}$ is obvious.
Properties \ref{en:p2},
\ref{en:p3} are readily verified.
The properties formulated in \ref{en:p1} are also clearly fulfilled, except the triangle inequality.
The latter can be shown as follows.
Since the function $r\mapsto 1/r\ell(r)$ is decreasing on $(0,1)$, for each fixed $s\in(0,1)$ the function
\begin{equation*}
    r\mapsto \min\left\{\tfrac{1}{s\ell(s)},\tfrac{1}{r\ell(r)}\right\}
\end{equation*}
is also decreasing on $(0,1)$.
Moreover,
\begin{equation}\label{E:kernel}
	\sou g(s)
		= \int_0^{1} g(r)\min\left\{\tfrac{1}{s\ell(s)},\tfrac{1}{r\ell(r)}\right\}\d r
		\quad\text{for $s\in(0,1)$.}
\end{equation}
Thus, by Hardy's lemma \citep[Section~2, Proposition~3.6]{Ben:88}, the operator $g\mapsto\sou g^{*}$ is subadditive on $\MM_0(0,1)$.
This implies the triangle inequality for $\|\cdot\|_{\xsd(0,1)}$.
Next, observe that
\begin{equation*}
	\|\chi_{(0,1)}\|_{\xsd(0,1)}
		= \left\|\sou \chi_{(0,1)} \right\|_{X'(0,1)}
		\approx \left\|\frac{1}{\ell(s)}+\ell\ell(s)\right\|_{X'(0,1)}.
\end{equation*}
It thus follows from~\eqref{E:target-condition} that $\|\chi_{(0,1)}\|_{\xsd(0,1)}<\infty$.
Property \ref{en:p4} is thus also proved.
Finally, for every $g\in\MM_+(0,1)$, one has that
\begin{align*}
	\|g\|_{\xsd(0,1)}
		& = \left\|\sou g^{*} \right\|_{X'(0,1)}
			\ge \left\|\chi_{(\half,1)}\sou g^{*} \right\|_{X'(0,1)}
   	 	\ge \left\|
						\frac{\chi_{(\half,1)}(s)}{s\ell(s)}
							\int_{0}^{\half} g^{*}(r)\,\d r
					\right\|_{X'(0,1)}
        \\
    & = \int_{0}^{\half} g^{*}(r)\,\d r
					\left\| \frac{\chi_{(\half,1)}(s)}{s\ell(s)} \right\|_{X'(0,1)}
       \ge \int_{0}^{\half} g^{*}(r)\,\d r \,
							\|\chi_{(\half,1)}\|_{X'(0,1)},
\end{align*}
where in the last inequality we used that $1/s\ell(s)$ decreases to $1$ on $(0,1)$.
Since $X'$ is a rearrangement-invariant space, $\|\chi_{(0,\half)}\|_{X'(0,1)}<\infty$.
Consequently,
\begin{equation*}
	\int_0^{1}g(t)\,\d t
		= \int_0^{1}g^*(s)\,\d s
		\le 2 \int_0^{\half}g^*(s)\,\d s
		\le 	\frac 2{\|\chi_{(0,\half)}\|_{X'(0,1)}}\|g\|_{\xsd(0,1)}.
\end{equation*}
This establishes property~\ref{en:p5}.
The proof is complete.
\end{proof}

\begin{proof}[Proof of Theorem \ref{T:OU-target}]
We begin by showing that
\begin{equation}\label{july2}
	\sou\colon X(0,1)\to \xs(0,1).
\end{equation}
Hence, embedding~\eqref{E:boundedness-embedding-optimal} will follow, owing to Theorem~\ref{T:reduction-principle}.
By equation~\eqref{E:kernel} and the monotonicity of the kernel of the operator $\sou$, one obtains, via the Hardy--Littlewood inequality, that $\sou g\le\sou g^{*}$ for every $g\in\MM_0(0,1)$.
Therefore,
\begin{equation} \label{E:S-HL}
	\|\sou g\|_{X'(0,1)}
		\le \|\sou g^*\|_{X'(0,1)}.
\end{equation}
This shows that $\|\sou g\|_{X'(0,1)}\le\|g\|_{\xsd(0,1)}$, namely $\sou\colon \xsd(0,1)\to X'(0,1)$.
By property~\eqref{E:S-selfadojoint},
we hence deduce that $\sou\colon X(0,1) \to \xs(0,1)$.

Assume now that $\|\cdot\|_{Z(0,1)}$ is a rearrangement-invariant function norm such that
\begin{equation*}
	\|u-\med(u)\|_{Z\RG} \le c\|\lgn u\|_{X\RG}
\end{equation*}
for some constant $c$ and for every $u\in\Wlgn X\RG$.
Then, by Theorem~\ref{T:reduction-principle}, one has that $\sou\colon X(0,1)\to Z(0,1)$.
Hence, $\sou\colon Z'(0,1) \to X'(0,1)$ as well.
In particular,
\begin{equation*}
	\|g\|_{\xsd(0,1)}
		= \|\sou g^*\|_{X'(0,1)}
		\le C \|g^*\|_{Z'(0,1)}
		= C \|g\|_{Z'(0,1)}
\end{equation*}
for every $g\in\MM_+(0,1)$.
This shows $Z'(0,1)\to \xsd(0,1)$, which in turn implies $\xs(0,1)\to Z(0,1)$.
Therefore, $\xs\RG\to Z\RG$.
The optimality of the space $\xs\RG$ in embedding~\eqref{E:boundedness-embedding-optimal} is thus established.

Finally assume that condition~\eqref{E:target-condition} is not satisfied and yet embedding~\eqref{E:boundedness-embedding} holds for some rearrangement-invariant space $Y\RG$.
Then, by Theorem~\ref{T:reduction-principle}, $\sou\colon X(0,1)\to Y(0,1)$.
This in turn implies that $\sou\colon Y'(0,1)\to X'(0,1)$.
Hence, there exists a constant $c$ such that $\|\sou g^*\|_{X'(0,1)} \le c \|g^*\|_{Y'(0,1)}$ for every $g\in\MM_+(0,1)$.
Applying this inequality to the function $g=\chi_{(0,1)}$, and using the fact that $Y'(0,1)$ is a rearrangement-invariant space an hence satisfies~\ref{en:p4}, tell us that
\begin{equation*}
	\infty
		> c\|\chi_{(0,1)}\|_{Y'(0,1)}
		\ge \|\sou\chi_{(0,1)}\|_{X'(0,1)}
		\approx \bigl\|\tfrac{1}{\ell(s)} + \ell\ell(s)\bigr\|_{X'(0,1)}
		\ge \|\ell\ell(s)\|_{X'(0,1)}
		= \infty.
\end{equation*}
This contradiction shows that no space $Y(0,1)$ enjoying this property can exist.
\end{proof}

\begin{proof}[Proof of Lemma~\ref{optimaldm}, sketched]
The fact that $\|\cdot\|_{\ys(0,1)}$ is a rearrangement invariant function norm can be deduced from Lemma~\ref{optimalfn} and a close inspection of its proof.
\end{proof}

\begin{proof}[Proof of  Theorem~\ref{C:OU-domain}, sketched]
It follows from Theorem~\ref{T:OU-target} and its proof that $\sou\colon \ys(0,1)\to Y(0,1)$ and that $\ys(0,1)$ is the largest space enjoying this property.
Embedding~\eqref{E:boundedness-embedding-optimal-domain} and the optimality of its domain space thus follow from Theorem~\ref{T:reduction-principle}.

Now assume that condition~\eqref{E:domain-condition} fails and that embedding~\eqref{E:boundedness-embedding} holds for some rearrangement-invariant function norm $\|\cdot\|_{X(0,1)}$.
Then, from Theorem~\ref{T:reduction-principle} again we infer that $\sou\colon X(0,1)\to Y(0,1)$, and we arrive at an analogous contradiction as in the proof of Theorem~\ref{T:OU-target}.
\end{proof}

\section{Ornstein-Uhlenbeck embeddings in Orlicz spaces}\label{orlicz}

This section is devoted to a description of Sobolev type inequalities for the Ornstein-Uhlenbeck operator with optimal target and domain in the class of Orlicz spaces.
Before presenting our general results, we collect  a few embeddings in the following example.
They follow as special cases of the general results  and  are yet sufficient to exhibit some peculiar traits of the inequalities in question.

\begin{example}
The following embeddings hold:
\begin{equation} \label{E:Orlicz-examples}
	\left\{
	\renewcommand{\arraystretch}{1.2}
	\begin{array}{@{}l@{{}\to{}}l@{\quad}l}
		\Wlgn L(\log\log L)^{\alpha+1}
			& L(\log\log L)^{\alpha}
				& \text{if $\alpha\ge 0$}
					\\
		\Wlgn L^{1}(\log L)^{\alpha}
			& L^{1}(\log L)^{\alpha}
				& \text{if $\alpha\ge 0$}
					\\
		\Wlgn L^p(\log L)^\alpha
			& L^p(\log L)^{\alpha+p}
				& \text{if $p\in(1,\infty)$ and $\alpha\in\R$}
					\\
		\Wlgn\exp L^{\beta}
			& \exp L^{\beta}
				& \text{if $\beta>0$}
					\\
		\Wlgn\exp\exp L^{\beta+1}
			& \exp \exp L^{\beta}
		 		& \text{if $\beta>0$}
					\\
		\Wlgn L^{\infty}
			& \exp \exp L,
	\end{array}
	\right.
\end{equation}
where all the spaces are over $\RG$.
\end{example}

The distinguishing features of the embeddings in \eqref{E:Orlicz-examples} can be summarized as follows.

First, all the domain spaces and the target spaces in equation~\eqref{E:Orlicz-examples} are not only optimal within the class of Orlicz spaces, but also among all rearrangement-invariant spaces.
This property is in sharp contrast with Sobolev embeddings in Euclidean domains, including those for the Laplace operator, where the optimal target and domain rearrangement-invariant space is always better (namely, essentially smaller on the target side and essentially larger one the domain side) than that in the smaller class of Orlicz spaces.

Next, observe that the norm in the target space can either be stronger, equivalent or weaker than that in the domain space.
This means that there can be a gain, or a draw or a loss in the degree of integrability of a function inherited from that of the Ornstein-Uhlenbeck operator.
On the contrary, a function vanishing on the boundary of a domain with finite Lebesgue measure  always enjoys stronger integrability properties than its Laplacian.

Finally, the embeddings above are worth being compared with standard second-order Gaussian Sobolev embeddings.
In particular, one has that
\begin{equation} \label{E:Orlicz-2nd}
	\left\{
	\renewcommand{\arraystretch}{1.2}
		\begin{array}{@{}l@{{}\to{}}l@{\quad}l}
		W^2 L^p (\log L)^\alpha
			& L^p (\log L)^{\alpha +1}
				& \text{if $p\in[1,\infty)$ and $\alpha\ge 0$}
					\\
		W^2\exp L^{\beta}
			& \exp L^{\frac{\beta}{\beta+1}}
		 		& \text{if $\beta>0$}
					\\
		W^2 L^{\infty}
			& \exp L,
	\end{array}
	\right.
\end{equation}
where all the spaces are over $\RG$, see~\citep[Theorems~7.8 and 7.13, Corollary 7.14]{Cia:15}.
The norms of the target spaces in the embeddings displayed in \eqref{E:Orlicz-2nd} are always weaker than those in the respective embeddings with the same domain norms in \eqref{E:Orlicz-examples}, save when $p=1$ in the last one of \eqref{E:Orlicz-2nd}, in which case the norm in the target space is stronger.
This phenomenon can be explained by the fact that, because of a multiplying factor blowing up near infinity, the first-order term in the Ornstein-Uhlenbeck operator plays a dominant role with respect not only to the Laplacian, but also to the full Hessian of a function, when norms sufficiently far from the $L^1\RG$ endpoint are taken.
Viceversa, when getting close to this endpoint, at which no embedding into a rearrangement-invariant space for the Ornstein-Uhlenbeck operator holds, the impact of the missing second-order derivatives and the gap between the properties of the Laplacian and the Hessian become apparent.

Let us now come to our discussion in full generality.
Let $A$ be a given Young function.
It is not restrictive to assume that
\begin{equation} \label{E:A-0}
	\int_{0} \frac{\tilde A(t)}{t^2}\,\d t < \infty.
\end{equation}
Indeed, one can replace, if necessary, $A$ with an equivalent Young function near infinity in such a way that condition \eqref{E:A-0} is fulfilled.
This replacement leaves the Orlicz space built upon $A$ unchanged, up to an equivalent norm.
Define $A_\lgn\colon[0,\infty)\to[0,\infty)$ by
\begin{equation} \label{E:AL-def}
	A_\lgn(t) = \int_{0}^{t} \frac{G_\lgn^{-1}(\tau)}{\tau}\,\d \tau
		\quad\text{for $t\ge 0$},
\end{equation}
where $G_\lgn\colon(0,\infty)\to[0,\infty)$ is given by
\begin{equation} \label{E:GL-def}
	G_\lgn(t)
		= \biggl\|
				\frac{1}{\tau\bar\ell(\tau)}
			\biggr\|_{L^{\tilde A}\left(\frac{1}{t},\infty\right)}
		\quad\text{for $t> 0$}
\end{equation}
and $\bar\ell(t)=\max\{\ell(t),1\}$.

\begin{theorem}[Reduction principle for embeddings in Orlicz spaces] \label{T:sou-Orlicz-reduction}
Let $A$ and $B$ be Young functions. Then
\begin{equation} \label{E:embedding-orlicz}
\Wlgn L^A\RG  \to L^B\RG
\end{equation}
 if and only if
\begin{equation} \label{E:sou-red-1}
	B \text{ is dominated by } A_\lgn
\end{equation}
and
\begin{equation} \label{E:sou-red-2}
	\tilde A \text{ is dominated by } \tilde B_\lgn,
\end{equation}
where $A_\lgn$ is the Young function given by equation~\eqref{E:AL-def}, and $\tilde B_\lgn$ is the Young function defined by the same equation, with $A$ replaced with $\tilde B$.

In particular, if $A\in\nabla_2$, then inequality~\eqref{E:embedding-orlicz} holds if and only if condition~\eqref{E:sou-red-1} is fulfilled, and if $B\in\Delta_2$, then inequality~\eqref{E:embedding-orlicz} holds if and only if condition~\eqref{E:sou-red-2} is fulfilled.
\end{theorem}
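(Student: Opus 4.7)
The plan is to pass to the one-dimensional inequality via the reduction principle, split the Calder\'on operator into its two mutually adjoint pieces, and then invoke a sharp Orlicz-space characterization of a weighted Hardy inequality matched to the weight $1/(s\ell(s))$.

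First, apply Theorem~\ref{T:reduction-principle} to replace~\eqref{E:embedding-orlicz} with the equivalent boundedness $\sou\colon L^A(0,1)\to L^B(0,1)$. Split
\begin{equation*}
	\sou g = Pg + Qg,
	\quad\text{where}\quad
	Pg(s) = \frac{1}{s\ell(s)}\int_0^s g(r)\,\d r
	\quad\text{and}\quad
	Qg(s) = \int_s^1 \frac{g(r)}{r\ell(r)}\,\d r.
\end{equation*}
Fubini's theorem shows that $Q=P'$ in the sense of~\eqref{E:duo}. Since $Pg,Qg\ge0$, $\sou\colon L^A(0,1)\to L^B(0,1)$ is equivalent to the simultaneous boundedness of $P$ and $Q$; via~\eqref{E:novabis} and $(L^A)'=L^{\tilde A}$ up to equivalent norms, the boundedness of $Q$ is in turn equivalent to $P\colon L^{\tilde B}(0,1)\to L^{\tilde A}(0,1)$.

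The heart of the matter is therefore the following \emph{weighted Hardy lemma}: for Young functions $A,B$ with $A$ normalized so that~\eqref{E:A-0} holds, $P\colon L^A(0,1)\to L^B(0,1)$ if and only if $B$ is dominated by $A_\lgn$. The sufficiency would be established by proving the modular bound $\int_0^1 A_\lgn(|Pg|)\,\d s\lesssim 1$ whenever $\int_0^1 A(|g|)\,\d r\le 1$: Jensen's inequality applied to the convex function $A_\lgn$, whose derivative equals $G_\lgn^{-1}(t)/t$, combined with the change of variable $t=1/s$, converts the averaging $\frac{1}{s\ell(s)}\int_0^s$ into a dual pairing against the weight $1/(\tau\bar\ell(\tau))$ on $(1/t,\infty)$; the H\"older inequality for Orlicz norms then pairs this weight with $g$ by the very definition~\eqref{E:GL-def} of $G_\lgn$. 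Necessity is verified by testing $P$ on the family $g_{\lambda,r_0}=\lambda\chi_{(0,r_0)}$, for which $Pg_{\lambda,r_0}(s)=\lambda r_0/(s\ell(s))$ on $(r_0,1)$; after optimizing over $\lambda$ and $r_0$ and invoking~\eqref{E:conjugate-invers}, this forces $B(t)\le A_\lgn(ct)$ for large $t$. With the lemma in hand, the equivalence of~\eqref{E:embedding-orlicz} with the conjunction of~\eqref{E:sou-red-1} and~\eqref{E:sou-red-2} is immediate by applying it once to $P\colon L^A\to L^B$ and once to $P\colon L^{\tilde B}\to L^{\tilde A}$.

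For the one-sided implications, note that~\eqref{E:sou-red-1} is equivalent to $\tilde{A_\lgn}\lesssim\tilde B$ near infinity, so it suffices to prove that the iterated transform $A\mapsto\tilde{(\tilde{A_\lgn})_\lgn}$ returns $A$, up to equivalence near infinity, exactly when $A\in\nabla_2$; this is a direct manipulation of~\eqref{E:AL-def}--\eqref{E:GL-def} in which the $\nabla_2$ condition prevents $\tilde A$ from growing too slowly and thereby keeps $G_\lgn$ non-degenerate. The case $B\in\Delta_2$ follows by duality, since $B\in\Delta_2\iff\tilde B\in\nabla_2$ and the roles of the two conditions swap. The main obstacle will be the weighted Hardy lemma itself, and specifically the sharpness matching showing that $A_\lgn$ given by~\eqref{E:AL-def}--\eqref{E:GL-def} is the \emph{optimal} target Young function (up to equivalence near infinity) for the weight $1/(s\ell(s))$: the cut-off $\bar\ell=\max\{\ell,1\}$ is needed because $\ell$ fails to be monotone at the pivot $s=1/e$, while the restriction of $\tilde A$ to $(1/t,\infty)$ in~\eqref{E:GL-def} captures exactly the dual contribution of the weight in the Orlicz--H\"older step. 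Tracking absolute constants through the change of variables and handling the boundary regime where $\ell$ is close to $1$ is where the delicate bookkeeping lies.
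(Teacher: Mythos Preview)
Your overall architecture matches the paper's: reduce to $\sou\colon L^A(0,1)\to L^B(0,1)$, split $\sou$ into its two Hardy pieces, characterize each piece in Orlicz terms, and combine. But two genuine gaps remain.

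\textbf{The Hardy lemma is stated for the wrong piece, and its proof sketch does not work.} With your notation, the condition ``$B$ dominated by $A_\lgn$'' characterizes the boundedness of $Q\colon L^A\to L^B$, not of $P$; the paper obtains this by dualizing to $P\colon\Lambda^{\tilde B}\to L^{\tilde A}$ and testing on $\chi_{(0,\rho)}$, which produces exactly $\rho\,\|\omega\|_{L^{\tilde A}(\rho,\infty)}=\rho\,G_\lgn(1/\rho)$ on the left and $\rho\,B^{-1}(1/\rho)$ on the right. Your proposed sufficiency via ``Jensen for $A_\lgn$ plus H\"older'' does not go through: $Pg(s)=\omega(s)\int_0^s g$ is not a genuine average (the factor $1/\ell(s)$ obstructs Jensen), and no single H\"older step converts the resulting expression into the modular bound you need. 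The paper's route is substantially heavier: it first proves the weak-type bound $Q\colon L^A\to M^B$ by the characteristic-function test above, and then upgrades weak to strong by a scaling trick --- replacing $A,B$ by $A/N,\,B/N$ for $N\in(0,1]$, checking that the weak bound is \emph{uniform in $N$} thanks to the admissibility of $\omega$, and feeding this into a dyadic level-set decomposition of $Qg$. That self-improvement step is the real content, and nothing in your sketch substitutes for it.

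\textbf{The one-sided implications via an ``involution'' of the $\lgn$-transform are unproven and unnecessary.} You claim that $A\mapsto\widetilde{(\widetilde{A_\lgn})_\lgn}$ returns $A$ up to equivalence precisely when $A\in\nabla_2$, but give no argument; this would require a delicate two-sided comparison of iterated Luxemburg norms of $\omega$ and is not obvious. The paper avoids this entirely with a structural observation: because $\int_\varrho^1 \omega(r)r^{-1}\,\d r\approx\omega(\varrho)$ for the specific weight $\omega(r)=1/(r\ell(r))$, one has $\sou\approx Q\circ P_{\mathrm{avg}}$, where $P_{\mathrm{avg}}g(s)=\tfrac{1}{s}\int_0^s g$. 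When $A\in\nabla_2$ the Hardy averaging operator $P_{\mathrm{avg}}$ is bounded on $L^A$, so $\sou$ is bounded if and only if $Q$ is, which is exactly condition~\eqref{E:sou-red-1}. The $\Delta_2$ case then follows by duality. This composition argument is both shorter and more transparent than your proposed route.
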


We shall derive this theorem as a corollary of a  more general result for weighted Hardy-type integral operators.
Let $\omega\colon(0,\infty)\to(0,\infty)$ be a weight, namely a nonnegative measurable function.
We consider the Hardy-type operator defined as
\begin{equation} \label{E:Rw-def}
	g\mapsto \int_{s}^{1} g(r)\omega(r)\,\d r
		\quad\text{for $s\in(0,1)$,}
\end{equation}
for $g \in \Mpl (0,1)$.
It  immediately follows via Fubini's theorem that its adjoint operator is given by
\begin{equation} \label{E:Rw'-def}
	g\mapsto \omega(s) \int_{0}^{s} g(r)\,\d r
		\quad\text{for $s\in(0,1)$,}
\end{equation}
for $g \in \Mpl (0,1)$.

With the choice of the  special weight
\begin{equation} \label{E:w-log}
	\omega(r) = \frac{1}{r\bar\ell(r)}
		\quad\text{for $r>0$},
\end{equation}
the operator $\sou$ defined in \eqref{E:S-def} takes the form
\begin{equation} \label{E:S-is-Rw+Rw'}
	\sou g(s)
		= \omega(s) \int_{0}^{s} g(r)\,\d r + \int_{s}^{1} g(r)\omega(r)\,\d r
	\quad\text{for $s \in (0,1)$,}
\end{equation}
for $g\in\Mpl(0,1)$.
Theorem~\ref{T:reduction-principle} thus enables us to transfer the study of Ornstein-Uhlenbeck Sobolev-type embeddings to the analysis of boundedness properties of the operators \eqref{E:Rw-def} and \eqref{E:Rw'-def} with $\omega$ as in \eqref{E:w-log}.

The method that will be developed is applicable to a large class of weights $\omega$ satisfying the mild assumptions that
\begin{equation} \label{E:w-prop}
	\omega\text{ is decreasing}
		\quad\text{and}\quad
	s\mapsto s\omega(s)\text{ is increasing in $(0,\infty)$}.
\end{equation}
A weight $w$ fulfilling conditions \eqref{E:w-prop} will be called an \emph{admissible weight} in what follows.

Given a Young function $A$, we define the Young function $A_\omega\colon[0,\infty)\to[0,\infty)$ by
\begin{equation} \label{E:Aw-def}
	A_\omega(t) = \int_{0}^{t} \frac{G_\omega^{-1}(\tau)}{\tau}\,\d \tau
		\quad\text{for $t\ge 0$},
\end{equation}
where $G_\omega\colon(0,\infty)\to[0,\infty)$ is given by
\begin{equation} \label{E:Gw-def}
	G_\omega(\tau) = \| \omega\|_{L^{\tilde A}\left(\frac{1}{\tau},\infty\right)}
		\quad\text{for $\tau>0$}.
\end{equation}
Clearly, if $\omega$ is as in \eqref{E:w-log}, then $G_\omega=G_\lgn$ and $A_\omega=A_\lgn$.

The definition of the function $A_\omega$ arises from the following basic result.

\begin{lemma} \label{L:young}
Let $\omega$ be an admissible weight and let $A$ be a Young function such that
\begin{equation} \label{E:A-w}
	\omega\chi_{(1,\infty)} \in L^{\tilde A}(0,\infty).
\end{equation}
Then the function  $A_\omega$ defined by \eqref{E:Aw-def} is a Young function, and
\begin{equation} \label{E:Aw-eq}
	A_\omega(t)\le G_\omega^{-1}(t)\le A_\omega(2t)
		\quad\text{for $t\ge 0$}.
\end{equation}
\end{lemma}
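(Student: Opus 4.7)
The plan is to reduce everything to one key monotonicity fact: $t\mapsto G_\omega^{-1}(t)/t$ is non-decreasing on $(0,\infty)$ (equivalently $\tau\mapsto G_\omega(\tau)/\tau$ is non-increasing). Granting this, convexity of $A_\omega$ is immediate from $A_\omega'(\tau)=G_\omega^{-1}(\tau)/\tau$, while the remaining Young-function axioms ($A_\omega(0)=0$, left-continuity, non-constancy on $(0,\infty)$) fall out of the integral representation together with the hypothesis \eqref{E:A-w}, which ensures that $G_\omega(\tau)\to 0$ as $\tau\to 0^+$ by dominated convergence for the Luxemburg norm, and that $G_\omega$ is strictly positive somewhere. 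The bounds in \eqref{E:Aw-eq} are then routine: on $(0,t]$ one has $G_\omega^{-1}(\tau)/\tau\le G_\omega^{-1}(t)/t$, whence
\[
A_\omega(t)=\int_0^t\frac{G_\omega^{-1}(\tau)}{\tau}\,\d\tau\le G_\omega^{-1}(t),
\]
and on $[t,2t]$ the reverse inequality yields
\[
A_\omega(2t)\ge\int_t^{2t}\frac{G_\omega^{-1}(\tau)}{\tau}\,\d\tau\ge\frac{G_\omega^{-1}(t)}{t}\int_t^{2t}\d\tau=G_\omega^{-1}(t).
\]

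To prove the key monotonicity, I would set $\lambda_t:=\|\omega\|_{L^{\tilde A}(t,\infty)}$, so that $G_\omega(\tau)=\lambda_{1/\tau}$ and the statement ``$\tau\mapsto G_\omega(\tau)/\tau$ is non-increasing'' becomes ``$t\mapsto t\lambda_t$ is non-decreasing''. The Luxemburg definition, reparameterised by $\mu=t\lambda$, reads
\[
t\lambda_t=\inf\bigl\{\mu>0:F(\mu,t)\le 1\bigr\},\qquad F(\mu,t):=\int_t^\infty\tilde A\!\left(\tfrac{t\omega(s)}{\mu}\right)\d s,
\]
and a change of variable $s=tr$ recasts $F$ as
\[
F(\mu,t)=t\int_1^\infty\tilde A\!\left(\tfrac{t\omega(tr)}{\mu}\right)\d r.
\]
For each fixed $r\ge 1$, admissibility of $\omega$ applied at the points $t_1r\le t_2 r$ yields $t_1\omega(t_1r)=\tfrac{1}{r}(t_1r)\omega(t_1r)\le\tfrac{1}{r}(t_2r)\omega(t_2r)=t_2\omega(t_2r)$, so $t\mapsto t\omega(tr)$ is non-decreasing. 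Hence, for fixed $\mu$, both the integrand and the prefactor in the displayed expression for $F$ are non-decreasing in $t$, $F(\mu,\cdot)$ is non-decreasing, and so is the threshold $t\lambda_t$.

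The only non-routine move is the substitution $\mu=t\lambda$, which converts the desired monotonicity of $t\lambda_t$ into the manifest monotonicity of $t\omega(tr)$ granted by the second admissibility condition; without this reparameterisation the two effects (the shrinking domain of integration and the scaling of the argument of $\tilde A$) pull in opposite directions. The first admissibility condition, monotonicity of $\omega$, enters only to guarantee that $G_\omega$ is finite on all of $(0,\infty)$: on a bounded subinterval $(1/\tau,1)$ with $\tau>1$ it gives $\omega(s)\le\tau\omega(1)$, ensuring a finite $L^{\tilde A}$-norm there, and combining with the hypothesis $\omega\chi_{(1,\infty)}\in L^{\tilde A}$ settles the finiteness of $G_\omega$ everywhere.
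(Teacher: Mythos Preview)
Your proof is correct and follows essentially the same route as the paper's: both reduce the lemma to the monotonicity of $\tau\mapsto G_\omega(\tau)/\tau$, established via the same reparametrisation of the Luxemburg infimum and the same change of variables $s=tr$ (the paper writes $t/\tau$ with $\tau=1/t$), so that the admissibility condition on $s\omega(s)$ does all the work. You spell out the derivation of \eqref{E:Aw-eq} and the remaining Young-function axioms in more detail than the paper, which simply states that they follow from the monotonicity of $G_\omega^{-1}(\tau)/\tau$; one small slip is that your bound $\omega(s)\le\tau\omega(1)$ on $(1/\tau,1)$ actually uses the \emph{second} admissibility condition (via $s\omega(s)\le\omega(1)$), not the first, though the first condition alone already gives the simpler bound $\omega(s)\le\omega(1/\tau)$, which is equally sufficient for the finiteness of $G_\omega$.
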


\begin{proof}
Condition~\eqref{E:A-w} ensures that $G_\omega$ is finite-valued.
By the definition of Luxemburg norm and a change of variables one has that
\begin{align} \label{E:Gw-monotone}
	\begin{split}
	\frac{G_\omega(\tau)}{\tau}
		& = \frac{1}{\tau}\inf\left\{\lambda>0:
					\int_{{1}/{\tau}}^\infty
						\tilde A\left( \frac{\omega(t)}{\lambda} \right)\d t\le 1
					\right\}
		= \inf\left\{\lambda>0:
					\int_{{1}/{\tau}}^\infty
						\tilde A\left( \frac{\omega(t)}{\lambda\tau} \right)\d t\le 1
					\right\}
			\\
		& = \inf\left\{\lambda>0:
					\frac1\tau\int_{1}^\infty
						\tilde A\left( \frac{\omega(t/\tau)}{\lambda\tau} \right)\d t\le 1
					\right\}
			\quad \text{for $\tau >0$.}
	\end{split}
\end{align}
Owing to assumption \eqref{E:w-prop}, the function $\tilde A\left(\frac{\omega(t/\tau)}{\lambda\tau} \right)$ is decreasing in $\tau$ for every $t$ and $\lambda$, and therefore $G_\omega(\tau)/\tau$ is decreasing.
This implies that $G_\omega^{-1}(\tau)/\tau$ is increasing and, consequently, $A_\omega$ is a Young function.
Furthermore, inequalities~\eqref{E:Aw-eq} follow from the monotonicity of the function $G_\omega^{-1}(\tau)/\tau$.
\end{proof}

The Young function $A_\omega$ enters the definition of the optimal Orlicz target space for the operator~\eqref{E:Rw-def} on the domain $L^A(0,1)$.
This is the content of the following result.

\begin{theorem} \label{T:Rw-Orlicz-reduction}
Let $A$ and $B$ be Young functions, let $\omega$ be an admissible weight, and let $A_\omega$ be the Young function given by \eqref{E:Aw-def}.
Then there exists a constant $c$ such that
\begin{equation} \label{E:Rw-LA-LB}
	\left\|\int_{s}^{1} g(r)\omega(r)\,\d r\right\|_{L^B(0,1)}
		\le c \|g\|_{L^A(0,1)}
\end{equation}
for every $g\in L^A(0,1)$
if and only if
\begin{equation} \label{E:BleAw}
	B \text{ is dominated by } A_\omega.
\end{equation}
\end{theorem}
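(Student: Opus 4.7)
The plan is to prove the two implications separately, pivoting on the endpoint boundedness
\[
T\colon L^A(0,1)\to L^{A_\omega}(0,1),\qquad Tg(s)=\int_s^1 g(r)\omega(r)\,\d r.
\]
Once this is in hand, sufficiency is immediate: pointwise domination of $B$ by $A_\omega$ near infinity yields the Orlicz embedding $L^{A_\omega}(0,1)\hookrightarrow L^B(0,1)$ on the finite-measure space $(0,1)$, recorded in Section~\ref{spaces}, so chaining with the endpoint gives \eqref{E:Rw-LA-LB}. Necessity will be obtained by testing \eqref{E:Rw-LA-LB} against a near-extremal function for the Orlicz H\"older inequality.

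For the endpoint, the naive H\"older estimate in the duality $L^A(s,1)$--$L^{\tilde A}(s,1)$ produces $Tg(s)\le 2G_\omega(1/s)$ for $\|g\|_{L^A(0,1)}\le 1$, which by~\eqref{E:Aw-eq} is equivalent to $Tg(s)\le 2A_\omega^{-1}(c/s)$. This pointwise control is short of the target, since $\int_0^1 A_\omega(Tg(s)/c')\,\d s$ would then diverge like $\int_0^1 \d s/s$. I will bypass the gap by passing to the adjoint: by~\eqref{E:novabis}, the endpoint boundedness is equivalent to the Hardy--Orlicz inequality
\[
\left\|\omega(r)\int_0^r h(s)\,\d s\right\|_{L^{\tilde A}(0,1)} \lesssim \|h\|_{L^{\widetilde{A_\omega}}(0,1)}
\]
for $h\in\Mpl(0,1)$, where $\widetilde{A_\omega}$ denotes the Young conjugate of $A_\omega$. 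Because $A_\omega'(t)=G_\omega^{-1}(t)/t$ by construction, the pair $(A_\omega,\widetilde{A_\omega})$ is precisely the one calibrated to $\omega$. A single application of the Young inequality $hk\le\widetilde{A_\omega}(h)+A_\omega(k)$, with multiplier $k=k(r,s)$ chosen proportionally to $s\,\omega(r)$, followed by integration in $s$ and the monotonicity of $G_\omega(\tau)/\tau$ from Lemma~\ref{L:young}, will control the $L^{\tilde A}$-norm of the adjoint by the $L^{\widetilde{A_\omega}}$-norm of $h$.

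For necessity, assume \eqref{E:Rw-LA-LB} and fix $t>1$. By standard duality for the Luxemburg norm there is $g_t\in\Mpl(0,1)$ supported in $(1/t,1)$ with $\|g_t\|_{L^A(0,1)}\le 1$ and $\int_{1/t}^1 g_t(r)\omega(r)\,\d r \ge \tfrac{1}{2}\|\omega\|_{L^{\tilde A}(1/t,1)}$. Since $\|\omega\chi_{(1,\infty)}\|_{L^{\tilde A}}$ is a fixed constant by~\eqref{E:A-w}, for $t$ large enough one has $\|\omega\|_{L^{\tilde A}(1/t,1)}\ge \tfrac12 G_\omega(t)$, whence $Tg_t(s)\ge cG_\omega(t)$ for every $s\in(0,1/t)$. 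Applying \eqref{E:Rw-LA-LB} and the Luxemburg definition,
\[
\tfrac{1}{t}\,B\!\bigl(cG_\omega(t)/c'\bigr) \le \int_0^{1/t} B\!\bigl(Tg_t(s)/c'\bigr)\,\d s \le 1,
\]
so $B(u)\le G_\omega^{-1}(c''u)$ for all sufficiently large $u$; combined with inequality~\eqref{E:Aw-eq} of Lemma~\ref{L:young}, this is the required domination \eqref{E:BleAw}. The principal obstacle is the sufficiency: the naive pointwise H\"older bound falls one logarithm short of $L^{A_\omega}$-integrability, and closing the gap forces one either into the dual Hardy--Orlicz argument, whose success rests on the precise identification of $\widetilde{A_\omega}$ through $A_\omega'(t)=G_\omega^{-1}(t)/t$, or into an $s$-dependent scaling $\lambda(s)$ in the Young inequality $g\omega\le A(g/\lambda)+\tilde A(\lambda\omega)$ with $\lambda(s)$ tuned to equalize the two contributions; both routes hinge critically on the logarithmic primitive encoded in~\eqref{E:Aw-def}.
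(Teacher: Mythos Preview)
Your necessity argument is sound and close to the paper's: the paper dualizes via~\eqref{E:novabis} and tests the adjoint against characteristic functions $\chi_{(0,\rho)}$, whereas you test the original operator against a near-extremal $g_t$; both routes deliver $G_\omega(t)\lesssim B^{-1}(t)$ for large $t$, hence~\eqref{E:BleAw} via~\eqref{E:Aw-eq}.

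The sufficiency argument, however, has a genuine gap. You correctly identify that the pointwise H\"older bound $Tg(s)\lesssim G_\omega(1/s)$ only yields the weak-type estimate $T\colon L^A\to M^{A_\omega}$, not the strong one. Your proposed remedy---a single Young inequality $hk\le\widetilde{A_\omega}(h)+A_\omega(k)$ in the dual picture with $k(r,s)\propto s\,\omega(r)$---does not close. If you carry it out, the Young split at fixed $r$ with the best choice of multiplier again returns only the pointwise bound $\omega(r)\int_0^r h\lesssim \omega(r)/A_\omega^{-1}(1/r)$, i.e.\ the same weak estimate you started from; the ``integration in $s$'' does not create the extra decay needed to make $\int_0^1\tilde A(\cdot)\,\d r$ converge. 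The identity $A_\omega'(t)=G_\omega^{-1}(t)/t$ calibrates the \emph{endpoint} of the scale but does not by itself supply a modular inequality, and for general Young functions (no $\Delta_2$/$\nabla_2$) there is no ``single-step'' Hardy--Orlicz proof of the form you sketch.

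The paper resolves this by a different mechanism: it first establishes the easy weak bound $T\colon L^A(0,\infty)\to M^B(0,\infty)$ under the global condition $B(t)\le A_\omega(ct)$ (Proposition~\ref{P:Rw-LA-MB}), and then proves a self-improvement Proposition~\ref{P:Rw-from-weak-to-strong} showing that this weak bound automatically upgrades to the strong bound $T\colon L^A(0,\infty)\to L^B(0,\infty)$. The self-improvement works by rescaling both Young functions to $A_N=A/N$, $B_N=B/N$ for $N\in(0,1]$, checking that the weak bound survives uniformly in $N$ (this is where the admissibility $N\omega(Nr)\le\omega(r)$ and the structure of $G_\omega$ enter), deducing the distributional inequality $|\{Tg\ge t\}|\,B(t/c)\le\int A(g)$, and then summing over dyadic level sets $\{Tg\approx 2^k\}$ to recover the modular bound. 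Finally the $(0,\infty)$ result is transferred to $(0,1)$ by adjusting $A,B$ near zero. This discretization step is the substantive content you are missing; the Young-inequality shortcut you propose does not substitute for it.
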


The next theorem can be derived from  Theorem \ref{T:Rw-Orlicz-reduction} via a duality argument.

\begin{theorem} \label{T:Rw'-Orlicz-reduction}
Let $A$ and $B$ be Young functions, let $\omega$ be an admissible weight, and let $\tilde B_\omega$ be the Young function associated with $\tilde B$ as in \eqref{E:Aw-def}.
Then there exists a constant $c$ such that
\begin{equation} \label{E:Rw-dual-LA-LB}
	\left\|\omega(s)\int_{0}^{s} g(r)\,\d r\right\|_{L^B(0,1)}
		\le c \|g\|_{L^A(0,1)}
\end{equation}
for every $g\in L^A(0,1)$ if and only if
\begin{equation}
	\tilde A \text{ is dominated by } \tilde{B}_\omega.
\end{equation}
\end{theorem}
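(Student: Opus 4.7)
The plan is to deduce Theorem~\ref{T:Rw'-Orlicz-reduction} directly from Theorem~\ref{T:Rw-Orlicz-reduction} via the duality principle \eqref{E:novabis} for operators between rearrangement-invariant spaces, combined with the identification $(L^A)'(0,1) = L^{\tilde A}(0,1)$ (and similarly for $B$) up to equivalent norms. The key observation is that the operator in \eqref{E:Rw-dual-LA-LB}, namely $T_\omega\colon g\mapsto \omega(s)\int_0^s g(r)\,\d r$, is adjoint in the sense of \eqref{E:duo} to the operator $R_\omega\colon h\mapsto \int_s^1 \omega(r)h(r)\,\d r$ studied in Theorem~\ref{T:Rw-Orlicz-reduction}; a one-line Fubini computation (already used in the paper between \eqref{E:Rw-def} and \eqref{E:Rw'-def}) confirms this adjointness.

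First I would record that, by \eqref{E:novabis} together with the Orlicz duality, the inequality \eqref{E:Rw-dual-LA-LB}---which expresses exactly that $T_\omega\colon L^A(0,1)\to L^B(0,1)$---is equivalent to the boundedness $R_\omega\colon L^{\tilde B}(0,1)\to L^{\tilde A}(0,1)$, namely the existence of a constant $c$ such that
\begin{equation*}
\left\|\int_s^1 h(r)\omega(r)\,\d r\right\|_{L^{\tilde A}(0,1)}\le c\,\|h\|_{L^{\tilde B}(0,1)}
\end{equation*}
for every $h\in L^{\tilde B}(0,1)$. Next I would invoke Theorem~\ref{T:Rw-Orlicz-reduction}, applied with $A$ and $B$ replaced respectively by $\tilde B$ and $\tilde A$, to conclude that the latter inequality holds if and only if $\tilde A$ is dominated by $(\tilde B)_\omega$, which is precisely $\tilde B_\omega$ in the notation of the statement.

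There is essentially no obstacle here, since the substantial content is packed in Theorem~\ref{T:Rw-Orlicz-reduction}. Two minor checks deserve mention: the equivalence of norms in $(L^A)'(0,1)=L^{\tilde A}(0,1)$ introduces only multiplicative constants, which are harmlessly absorbed into $c$ and do not affect the qualitative dominance condition; and the admissibility condition \eqref{E:A-w} needed for $\tilde B_\omega$ to be a Young function via Lemma~\ref{L:young} is implicit in the statement (when it fails, the right-hand side of \eqref{E:Rw-dual-LA-LB} cannot control the left-hand side, so the equivalence is vacuous, as one can see by testing on $g=\chi_{(0,t)}$ and letting $t\to 0^+$).
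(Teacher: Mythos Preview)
Your proposal is correct and follows exactly the route the paper indicates: the text states that Theorem~\ref{T:Rw'-Orlicz-reduction} ``can be derived from Theorem~\ref{T:Rw-Orlicz-reduction} via a duality argument,'' and what you have written is precisely that argument, using the adjointness of the two operators and $(L^A)'=L^{\tilde A}$ to reduce to Theorem~\ref{T:Rw-Orlicz-reduction} with $(A,B)$ replaced by $(\tilde B,\tilde A)$.
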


The proof of Theorem \ref{T:Rw-Orlicz-reduction} consists of two steps.
First, in Proposition~\ref{P:Rw-LA-MB} we show that condition \eqref{E:BleAw} characterizes the boundedness of the operator in \eqref{E:Rw-def} between $L^A$ and the weak Orlicz space~$M^B$.
Second, in Proposition~\ref{P:Rw-from-weak-to-strong} we prove a self-improving property which ensures that the boundedness into a Marcinkiewicz space $L^B$ can be lifted to the Orlicz space $L^B$.

A similar scheme appeared earlier in the literature in the treatment of various Sobolev and Hardy-type inequalities, starting with the pioneering work of V.~Maz'ya in the early sixties of the last century, as recorded in \citep{Maz:11}.

For technical reasons, we first prove a version of our results for Orlicz spaces defined on $(0,\infty)$ instead of $(0,1)$.

\begin{proposition} \label{P:Rw-LA-MB}
Let $A$ and $B$ be Young functions and let $\omega$ be an admissible weight.
The following conditions are equivalent.
\begin{enumerate}
\item There exists a constant $c_1$ such that
\begin{equation} \label{E:Rw-LA-MB-infty}
	\left\|\int_{t}^{\infty} g(\tau)\omega(\tau)\,\d\tau\right\|_{M^B(0,\infty)}
		\le c_1 \|g\|_{L^A(0,\infty)}
\end{equation}
for every $g\in L^A(0,\infty)$.
\item $A$ satisfies condition \eqref{E:A-w} and there exists a constant $c_2$ such that
\begin{equation} \label{E:BleAw-infty}
	B(t) \le A_\omega(c_2t)
		\quad\text{for $t\ge 0$},
\end{equation}
where $A_\omega$ is the Young function given by~\eqref{E:Aw-def}.
\end{enumerate}
Moreover, the constants $c_1$ and $c_2$ depend only on each other.
\end{proposition}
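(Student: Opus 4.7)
The plan is to prove the two implications separately, with the main technical point being how to pass from a pointwise H\"older-type bound on $Tg(s)=\int_s^\infty g(\tau)\omega(\tau)\,\d\tau$ to a bound on its maximal rearrangement $(Tg)^{**}$, which is what enters the $M^B$ norm. Since replacing $g$ by $|g|$ can only increase $|Tg|$ pointwise, I may assume $g\ge 0$ throughout, in which case $Tg$ is nonnegative and nonincreasing and $(Tg)^*=Tg$.

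For $(ii)\Rightarrow(i)$, the H\"older inequality in Orlicz spaces together with the definition of $G_\omega$ immediately yields $Tg(s)\le 2\|g\|_{L^A(0,\infty)}G_\omega(1/s)$. To upgrade this to an estimate on $(Tg)^{**}$, I exchange the order of integration in
\begin{equation*}
    \int_0^s Tg(r)\,\d r
    = \int_0^s \tau g(\tau)\omega(\tau)\,\d\tau + s\int_s^\infty g(\tau)\omega(\tau)\,\d\tau,
\end{equation*}
and use the admissibility of $\omega$---specifically, $\tau\omega(\tau)\le s\omega(s)$ on $(0,s)$---to obtain the key inequality
\begin{equation*}
    (Tg)^{**}(s)\le Tg(s)+\omega(s)\int_0^s g(\tau)\,\d\tau.
\end{equation*}
Applying H\"older once more, invoking \eqref{E:conjugate-invers}, and exploiting the comparison $s\omega(s)A^{-1}(1/s)\lesssim G_\omega(1/s)$---which follows from testing the $L^{\tilde A}$-norm defining $G_\omega(1/s)$ against $\chi_{(s,2s)}$ and using the doubling estimate $\omega(s)\le 2\omega(2s)$ entailed by \eqref{E:w-prop}---I also bound the second summand by a multiple of $\|g\|_{L^A}G_\omega(1/s)$. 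Lemma~\ref{L:young} then converts $G_\omega(1/s)$ into $A_\omega^{-1}(1/s)$, and the hypothesis $B(t)\le A_\omega(c_2 t)$ into $B^{-1}\ge A_\omega^{-1}/c_2$, giving $(Tg)^{**}(s)/B^{-1}(1/s)\lesssim c_2\|g\|_{L^A}$ uniformly in $s$, which is the desired $M^B$ bound.

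For $(i)\Rightarrow(ii)$, a preliminary step is to show that \eqref{E:A-w} must hold: if $\omega\chi_{(1,\infty)}\notin L^{\tilde A}(0,\infty)$, duality between $L^A$ and $L^{\tilde A}$ produces a nonnegative $g\in L^A$ supported in $(1,\infty)$ with $\int g\omega=\infty$, forcing $Tg\equiv\infty$ on $(0,1)$ and $\|Tg\|_{M^B}=\infty$. Granted \eqref{E:A-w}, for each $\tau>0$ the identity $(L^A)'=L^{\tilde A}$ (valid up to the factor in \eqref{E:conjugate-invers}) supplies a nonnegative $g_\tau$ supported on $E=(1/\tau,\infty)$ with $\|g_\tau\|_{L^A}\le 1$ and $\int_E g_\tau\omega\ge G_\omega(\tau)/C$ for an absolute constant $C$. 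Since $Tg_\tau$ is constant on $(0,1/\tau)$, we have $(Tg_\tau)^{**}(1/\tau)\ge G_\omega(\tau)/C$, whence
\begin{equation*}
    \frac{G_\omega(\tau)}{C\,B^{-1}(\tau)}
    \le (Tg_\tau)^{**}(1/\tau)\,\varphi_B(1/\tau)
    \le \|Tg_\tau\|_{M^B(0,\infty)}
    \le c_1.
\end{equation*}
The reverse inequality of Lemma~\ref{L:young}, $G_\omega(\tau)\ge A_\omega^{-1}(\tau)/2$, then rearranges this to $A_\omega^{-1}(\tau)\le 2Cc_1 B^{-1}(\tau)$, equivalent to $B(t)\le A_\omega(2Cc_1\,t)$ for all $t\ge 0$, as required.

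The main obstacle is the passage from a pointwise bound on $Tg$ to one on $(Tg)^{**}$. For a decreasing function that is not integrable near the origin, $(Tg)^{**}$ can be dramatically larger than $(Tg)^{*}=Tg$, so the H\"older estimate on $Tg$ by itself only delivers boundedness into the weaker quasinorm built on $(Tg)^*$, and not into $M^B$. The Fubini identity exhibits the excess $(Tg)^{**}-Tg$ as precisely the adjoint operator $T'g(s)=\omega(s)\int_0^s g$ applied to $g$, so the proof reduces to verifying that $T'$ satisfies the very same pointwise estimate as $T$; this closure under duality is the analytic heart of the argument and rests on the nontrivial comparison $s\omega(s)A^{-1}(1/s)\approx G_\omega(1/s)$ for admissible weights, which is a direct consequence of the structural constraints~\eqref{E:w-prop} rather than a tautology of Orlicz--H\"older.
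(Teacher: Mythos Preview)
Your argument is correct and takes a genuinely different route from the paper's proof. The paper dualizes inequality~\eqref{E:Rw-LA-MB-infty} via~\eqref{E:novabis} and~\eqref{dic31} into the boundedness of the adjoint operator $g\mapsto \omega(s)\int_0^s g$ from $\Lambda^{\tilde B}(0,\infty)$ into $L^{\tilde A}(0,\infty)$, then invokes an external result (\citep[Proposition~3.4]{Mus:16}) asserting that such an inequality with a Lorentz-endpoint domain holds if and only if it holds on characteristic functions $\chi_{(0,\rho)}$; the characteristic-function case is then computed directly and shown to be equivalent to~\eqref{E:BleAw-infty}. Your approach instead attacks the $M^B$ norm head-on: the Fubini identity $(Tg)^{**}(s)=Tg(s)+s^{-1}\int_0^s \tau g(\tau)\omega(\tau)\,\d\tau$ together with the admissibility constraint $\tau\omega(\tau)\le s\omega(s)$ exhibits the excess $(Tg)^{**}-Tg$ as dominated by $T'g(s)=\omega(s)\int_0^s g$, and both $Tg$ and $T'g$ are then bounded pointwise by a multiple of $\|g\|_{L^A}G_\omega(1/s)$ via Orlicz--H\"older and the comparison $s\omega(s)A^{-1}(1/s)\lesssim G_\omega(1/s)$. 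This makes the argument self-contained (no reliance on~\citep{Mus:16}) and isolates explicitly the role of the adjoint in controlling the maximal function; the paper's route, by contrast, packages this analytic content into the duality $(M^B)'=\Lambda^{\tilde B}$ and the cited reduction-to-characteristics lemma, trading transparency for brevity. For the converse direction your test-function construction via the norming property of $L^{\tilde A}=(L^A)'$ is essentially the same as the paper's characteristic-function computation read backwards through the duality.
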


\begin{proof}
Thanks to property~\eqref{E:novabis}, inequality~\eqref{E:Rw-LA-MB-infty} is equivalent to
\begin{equation} \label{E:Rw'-weak2}
	\left\|\omega(t)\int_{0}^{t} g(\tau)\,\d\tau\right\|_{L^{\tilde A}(0,\infty)}
		\le c_1' \|g\|_{\Lambda^{\tilde B}(0,\infty)}
\end{equation}
for every $g\in\Lambda^{\tilde B}(0,\infty)$, where $c_1'>0$ depends only in $c_1$.
In turn, inequality~\eqref{E:Rw'-weak2} is equivalent to
\begin{equation} \label{E:Rw'-weak-star}
	\left\|\omega(t)\int_{0}^{t} g^*(\tau)\,\d\tau\right\|_{L^{\tilde A}(0,\infty)}
		\le c_1' \|g^*\|_{\Lambda^{\tilde B}(0,\infty)}
\end{equation}
for every $g\in\Lambda^{\tilde B}(0,\infty)$.
The latter equivalence is a consequence of inequality~\eqref{HL} and of the fact that the norm in $\Lambda^{\tilde B}(0,\infty)$ is rearrangement invariant.

Next, by \citep[Proposition~3.4]{Mus:16}, inequality~\eqref{E:Rw'-weak-star} is equivalent to the same inequality restricted to characteristic functions of the form $\chi_{(0,\rho)}$ for $\rho>0$.
Namely,
\begin{equation} \label{E:Rw'-weak-char}
	\left\|\omega(t)\int_{0}^{t} \chi_{(0,\rho)}(r)\, \d r\,\right\|_{L^{\tilde A}(0,\infty)}
		\le c_1' \|\chi_{(0,\rho)}\|_{\Lambda^{\tilde B}(0,\infty)}
	\quad\text{for $\rho>0$}.
\end{equation}
By equations~\eqref{E:Orlicz-char} and~\eqref{E:conjugate-invers},
\begin{equation} \label{E:Lambda-char}
	\tfrac12\rho B^{-1}\bigl( \tfrac{1}{\rho} \bigr)
		\le \|\chi_{(0,\rho)}\|_{\Lambda^{\tilde B}(0,\infty)}
		\le \rho B^{-1}\bigl( \tfrac{1}{\rho} \bigr)
	\quad\text{for $\rho>0$}.
\end{equation}
Notice that
\begin{equation*}
	\omega(t)\int_{0}^{t} \chi_{(0,\rho)}  (\tau)\, \d \tau\
		= \chi_{(0,\rho)}(t)t\omega(t)
			+ \chi_{(\rho, \infty)}(t)\rho \omega(t)
		\quad\text{for $t,\rho>0$}.
\end{equation*}
Thanks to properties~\eqref{E:w-prop} of the weight $\omega$,
\begin{align*}
	\rho \|\omega\|_{L^{\tilde A}(\rho,\infty)}
		& \ge \rho \|\omega\|_{L^{\tilde A}(\rho,2\rho)}
			\ge \rho \omega(2\rho) \|\chi_{(\rho,2\rho)}\|_{L^{\tilde A}(0,\infty)}
			\ge \thalf \rho \omega(\rho) \|\chi_{(0,\rho)}\|_{L^{\tilde A}(0,\infty)}
			\ge \thalf \|s\omega(s)\|_{L^{\tilde A}(0,\rho)}
\end{align*}
for $\rho>0$, whence
\begin{equation*}
	\rho \|\omega\|_{L^{\tilde A}(\rho,\infty)}
		\le \left\|\omega(t)\int_{0}^{t} \chi_{(0,\rho)}(\tau)\,\d \tau\right\|_{L^{\tilde A}(0,\infty)}
		\le 3 \rho \|\omega\|_{L^{\tilde A}(\rho,\infty)}
	\quad\text{for $\rho>0$}.
\end{equation*}
Altogether, inequality~\eqref{E:Rw'-weak-char} holds if and only if there exists a constant $c_2'>0$ such that
\begin{equation} \label{E:Gw-le-Binv}
	G_\omega\bigl(\tfrac{1}{\rho}\bigr)
		= \|\omega\|_{L^{\tilde A}(\rho,\infty)}
		\le c_2' B^{-1}\bigl(\tfrac{1}{\rho}\bigr)
		\quad\text{for $\rho>0$}.
\end{equation}
Since $B$ is a Young function, $B^{-1}$ is finite-valued and, consequently, condition~\eqref{E:A-w} follows from \eqref{E:Gw-le-Binv}.
Inequality~\eqref{E:Gw-le-Binv} implies that $B(t)\le G_\omega^{-1}(c_2' t)$ for $t>0$, an equivalent form of \eqref{E:BleAw-infty}, owing to inequalities~\eqref{E:Aw-eq}.
\end{proof}

\begin{proposition} \label{P:Rw-from-weak-to-strong}
Let $A$ and $B$ be Young functions and let $\omega$ be an admissible weight.
Assume that there exists a constant $c_1$ such that
\begin{equation} \label{E:Rw-LA-MB-infty-2}
	\left\|\int_{t}^{\infty} g(\tau)\omega(\tau)\,\d\tau\right\|_{M^B(0,\infty)}
		\le c_1 \|g\|_{L^A(0,\infty)}
\end{equation}
for every $g\in L^A(0,\infty)$.
Then there exists a constant $c_2$, depending only on $c_1$, such that
\begin{equation} \label{E:Rw-LA-LB-infty}
	\left\|\int_{t}^{\infty} g(\tau)\omega(\tau)\,\d\tau\right\|_{L^B(0,\infty)}
		\le c_2 \|g\|_{L^A(0,\infty)}
\end{equation}
for every $g\in L^A(0,\infty)$.
\end{proposition}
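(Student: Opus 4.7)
The idea is a Maz'ya-type self-improvement argument that exploits the fact that $R_\omega g$ is automatically non-increasing on $(0,\infty)$, being a tail integral of a nonnegative integrand. I would normalize $\|g\|_{L^A(0,\infty)} = 1$, so that $\int_0^\infty A(g)\,\d s\le 1$, and set $h = R_\omega g$. The first step is to introduce a dyadic decomposition of $(0,\infty)$ adapted to the level sets of $h$: for each $k \in \Z$, set $a_k = |\{s > 0: h(s) > 2^k\}|$ and $E_k = [a_{k+1}, a_k)$. Monotonicity of $h$ guarantees that the $E_k$ partition the support of $h$ and that $h\in[2^k,2^{k+1}]$ on $E_k$.

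The second step is to split $g = \sum_k g_k$ with $g_k = g\chi_{E_k}$ and observe the telescoping identity $\int_{E_k} g(\tau)\omega(\tau)\,\d\tau = h(a_{k+1}) - h(a_k) = 2^k$. Consequently, $R_\omega g_k$ equals $2^k$ on $(0, a_{k+1})$, is at most $2^k$ on $E_k$, and vanishes on $[a_k,\infty)$. Applying the weak-type hypothesis~\eqref{E:Rw-LA-MB-infty-2} to each $g_k$ and evaluating the resulting $M^B$-estimate at $s = a_{k+1}$ yields the family of inequalities
\begin{equation*}
    a_{k+1}\, B\Bigl(\frac{2^k}{c_1\|g_k\|_{L^A(0,\infty)}}\Bigr) \le 1
        \qquad\text{for $k\in\Z$.}
\end{equation*}

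The final step is to bound the Luxemburg integral $\int_0^\infty B(h/\lambda)\,\d s$ via the dyadic sum $\sum_k B(2^{k+1}/\lambda)(a_k - a_{k+1})$ and then to choose $\lambda$ of the order of $c_1$ so that the per-level bounds assemble into a finite total. The main technical obstacle will be the arithmetic of the Young function $B$ in this summation: one has to combine the individual estimates $a_{k+1}B(2^k/(c_1\|g_k\|_{L^A})) \le 1$ with an Orlicz-type summability control on $\{\|g_k\|_{L^A}\}$ deduced from $\sum_k \int_{E_k} A(g)\,\d s \le 1$, exploiting the convexity of $B$ to absorb the factors $\|g_k\|_{L^A}$. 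An alternative, possibly cleaner, route would be to establish directly the canonical strong-type bound $R_\omega : L^A(0,\infty) \to L^{A_\omega}(0,\infty)$, and then invoke Proposition~\ref{P:Rw-LA-MB} which forces $B(t)\le A_\omega(c_2t)$, whence $L^{A_\omega}(0,\infty) \hookrightarrow L^B(0,\infty)$ and \eqref{E:Rw-LA-LB-infty} follows. The latter approach has the advantage of separating the self-improvement (which becomes a universal statement independent of $B$) from the domination relation between $B$ and $A_\omega$.
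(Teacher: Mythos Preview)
Your dyadic decomposition of $(0,\infty)$ via the level sets of $h=R_\omega g$, the telescoping identity $\int_{E_k}g\,\omega=2^k$, and the per-level weak-type estimate
\[
	a_{k+1}\,B\!\left(\frac{2^k}{c_1\|g_k\|_{L^A(0,\infty)}}\right)\le 1
\]
are all correct, and this is exactly the skeleton of the paper's argument. But the ``main technical obstacle'' you flag is a genuine gap, not a routine detail. From the display above, convexity of $B$ gives $(a_k-a_{k+1})\,B(2^{k-1}/c_1)\le a_k\,B(2^{k-1}/c_1)\le\|g_{k-1}\|_{L^A(0,\infty)}$, so assembling the dyadic sum would require $\sum_k\|g_k\|_{L^A(0,\infty)}\lesssim 1$. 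This is false in general: Luxemburg norms over disjoint supports do not add up to the norm of the sum---only the modulars $\int_{E_k}A(g)$ do. No amount of convexity manipulation on $B$ alone will convert $\sum_k\int_{E_k}A(g)\le 1$ into the needed norm summability.

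The missing ingredient, and the heart of the paper's proof, is a \emph{scaling step} that upgrades the norm-form weak-type inequality to a modular one. For $N\in(0,1]$ one sets $A_N=A/N$ and $B_N=B/N$ and checks, via Proposition~\ref{P:Rw-LA-MB} and the admissibility of $\omega$, that \eqref{E:Rw-LA-MB-infty-2} transfers to the pair $(A_N,B_N)$ with a constant \emph{independent of $N$}. Choosing $N=\int_0^\infty A(g)\,\d s$ for a given $g$ then yields
\[
	\bigl|\{R_\omega g\ge t\}\bigr|\,B\!\left(\frac{t}{c'}\right)\le\int_0^\infty A\bigl(g(s)\bigr)\,\d s
	\qquad\text{whenever }\int_0^\infty A(g)\le 1.
\]
Applied to each $g_k$, this produces per-level bounds whose right-hand sides are $\int_{E_k}A(g)$, and these \emph{do} sum to at most $1$. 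Your alternative route through $L^{A_\omega}$ does not escape the difficulty: establishing $R_\omega\colon L^A(0,\infty)\to L^{A_\omega}(0,\infty)$ is precisely the case $B=A_\omega$ of the proposition you are trying to prove, and the same summability obstacle reappears.
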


\begin{proof}
Denote by $R$ the operator defined as
\begin{equation}\label{R}
	R g (t) = \int_{t}^{\infty} g(\tau)\omega(\tau)\,\d\tau
		\quad\text{for $t>0$,}
\end{equation}
for $g\in\Mpl(0, \infty)$.

Assume that $A$ and $B$ obey \eqref{E:Rw-LA-MB-infty-2}.
Let $N\in(0,1]$ and set
\begin{equation*}
	A_N(t) = \frac{A(t)}{N}
		\quad\text{and}\quad
	B_N(t) = \frac{B(t)}{N}
		\quad\text{for $t\ge 0$.}
\end{equation*}
Clearly, $A_N$ and $B_N$ are Young functions.
We shall show that
\begin{equation} \label{E:Rw-LA-MB-scaled}
	\|R g\|_{M^{B_N}(0,\infty)} \le c_2' \|g\|_{L^{A_N}(0,\infty)}
\end{equation}
for $g\in L^{A_N}(0,\infty)$, where the constant and $c_2'>0$ is independent of $N$.
By Proposition~\ref{P:Rw-LA-MB}, inequality~\eqref{E:Rw-LA-MB-infty-2} implies
\begin{equation} \label{E:BleAw-2}
	B(t) \le A_\omega(c_1't)
		\quad\text{for $t\ge 0$}
\end{equation}
for some constant $c_1'>0$, where $A_\omega$ is defined by \eqref{E:Aw-def}.
Let $(G_N)_\omega$ and $(A_N)_\omega$ be the functions associated with $G_N$ and $A_N$ as in \eqref{E:Gw-def} and \eqref{E:Aw-def}, respectively.
Since $\tilde{A_N}(t) = \tilde A(Nt)/N$ for $t\ge 0$, we have that
\begin{align*}
	(G_N)_\omega(t)
		& = \inf\left\{\lambda>0:
					\frac1N \int_{{1}/{t}}^\infty
							\tilde A\left( \frac{N\omega(r)}{\lambda} \right)\d r\le 1
						\right\}
			= \inf\left\{\lambda>0:
					\int_{{1}/{Nt}}^\infty
							\tilde A\left( \frac{N\omega(Nr)}{\lambda} \right)\d r\le 1
						\right\}
				\\
		& \le \inf\left\{\lambda>0:
					\int_{{1}/{Nt}}^\infty
							\tilde A\left( \frac{\omega(r)}{\lambda} \right)\d r\le 1
						\right\}
			= G_\omega(Nt)
	\quad\text{for $t>0$},
\end{align*}
where the inequality follows since $N\omega(Nr)\le \omega(r)$ for $N\le 1$ and $r>0$.
Therefore,
\begin{equation*}
	G_\omega^{-1}(t)\le N(G_N)_\omega^{-1}(t)
		\quad\text{for $t>0$},
\end{equation*}
and, consequently,
\begin{equation*}
	A_\omega(t) \le N(A_N)_\omega(t)
		\quad\text{for $t\ge 0$}.
\end{equation*}
Inequality~\eqref{E:BleAw-2} yields
\begin{equation*}
	B_N(t)
		= \frac{1}{N}B(t)
		\le \frac{1}{N} A_\omega(c_1't)
		\le (A_N)_\omega(c_1't)
	\quad\text{for $t\ge 0$},
\end{equation*}
which in turn implies \eqref{E:Rw-LA-MB-scaled} by Proposition~\ref{P:Rw-LA-MB}.

Let $g\in\MM_+(0,\infty)$ be such that
\begin{equation} \label{E:Afle1}
	\int_{0}^{\infty} A\bigl(g(s)\bigr)\d s \le 1.
\end{equation}
On setting
\begin{equation*}
	N = \int_{0}^{\infty} A\bigl(g(s)\bigr)\d s,
\end{equation*}
we have that $\|g\|_{L^{A_N}(0,\infty)}\le 1$ and, by inequality~\eqref{E:Rw-LA-MB-scaled},
\begin{equation} \label{E:Rw-norm-weak-le-C}
	\|R g\|_{M^{B_N}(0,\infty)} \le c_2'.
\end{equation}
By the definition of the Marcinkiewicz norm, inequality \eqref{E:Rw-norm-weak-le-C} implies that
\begin{equation*}
	c_2'
		\ge \|R g\|_{M^{B_N}(0,\infty)}
		\ge \sup_{\tau\in(0,\infty)} \frac{(Rg)^*(\tau)}{B_N^{-1}(1/\tau)}
		= \sup_{t\in(0,\infty)} \frac{t}{B_N^{-1}\bigl( 1/|\{R g \ge t\}| \bigr)}.
\end{equation*}
The latter inequality is equivalent to
\begin{equation} \label{E:Rw-weak-ineq}
	|\{R g \ge t\}|\, B\left(\tfrac{t}{c_2'}\right)
		\le \int_{0}^{\infty} A\bigl(g(s)\bigr)\d s
	\quad\text{for $t>0$,}
\end{equation}
for every $g\in\MM_+(0,\infty)$ satisfying condition~\eqref{E:Afle1}.

We conclude the proof by showing that the weak type estimate~\eqref{E:Rw-weak-ineq} implies strong type estimate
\begin{equation} \label{E:Rw-strong-ineq}
	\int_{0}^{\infty} B\left( \frac{R g(s)}{4c_2'} \right)\d s
		\le \int_{0}^{\infty} A\bigl(g(s)\bigr)\d s
\end{equation}
for every $g\in\MM_+(0,\infty)$ obeying~\eqref{E:Afle1}.
To this end, we use a classical discretization argument.
Given $g\in\MM_+(0,\infty)$, let $\{s_k\}$ be a sequence in $(0,\infty)$ such that
\begin{equation} \label{E:sk-def}
	R g(s_k) = 2^k
	\quad\text{for $k\in\Z$}.
\end{equation}
If $R g$ is bounded, then $k$ ranges from $-\infty$ to the smallest $K\in\Z$ such that $Rg(s_K)\le 2^K$.
Then, $s_K=0$ and $s_k$ is given by \eqref{E:sk-def} for $k<K$.
In the latter computations, $K$ thus denotes either $\infty$ or an integer.
Since the function $Rf$ is non-increasing, the sequence $\{s_k\}$ is non-increasing as well.
Thereby
\begin{equation*}
	Rg(s) \le Rg(s_{k+1}) = 2^{k+1}
		\quad\text{for $s\in[s_{k+1}, s_k)$}
\end{equation*}
and
\begin{align} \label{E:discretization}
    \begin{split}
	\int_{0}^{\infty} B\left( \frac{R g(s)}{4c_2'} \right)\d s
		&= \sum_{k<K} \int_{s_{k+1}}^{s_k}
				B\left( \frac{R g(s)}{4c_2'} \right)\d s
				\\
		&\le \sum_{k<K} \int_{s_{k+1}}^{s_k}
				B\left( \tfrac{2^{k+1}}{4c_2'} \right)\d s
		= \sum_{k<K} (s_k - s_{k+1})\,
				B\left( \tfrac{2^{k-1}}{c_2'} \right).
        \end{split}
\end{align}
Next, we define $g_k = g\chi_{[s_{k},s_{k-1})}$ for $k<K-1$.
If $s\in[s_{k+1},s_k)$, then
\begin{equation*}
	Rg_k(s)
		= \int_{s}^{\infty} g(r)\chi_{[s_{k},s_{k-1})}(r)\omega(r)\,\d r
		\ge \int_{s_{k}}^{s_{k-1}} g(r)\omega(r)\,\d r
		= Rg(s_{1}) - Rg(s_{k-1})
		= 2^{k-1}.
\end{equation*}
Hence, $\{Rg_k\ge 2^{k-1}\}\supseteq[s_{k+1},s_k)$.
Coupling this piece of information with the weak type estimate~\eqref{E:Rw-weak-ineq}, with $g$ replaced by $g_k$ and $t=2^{k-1}$, enables us to infer that
\begin{equation} \label{E:discretization-2}
	(s_k - s_{k+1})\, B\left( \tfrac{2^{k-1}}{c_2'} \right)
		\le |\{Rg_k\ge 2^{k-1}\}|\, B\left( \tfrac{2^{k-1}}{c_2'} \right)
		\le \int_{0}^{\infty} A\bigl(g_k(s)\bigr)\d s
		\le \int_{s_{k}}^{s_{k-1}} A\bigl(g(s)\bigr)\d s.
\end{equation}
Inequalities~\eqref{E:discretization} and \eqref{E:discretization-2} yield inequality~\eqref{E:Rw-strong-ineq}, which, in turn, implies~\eqref{E:Rw-LA-LB-infty}.
\end{proof}

The following result is a straightforward consequence of Propositions~\ref{P:Rw-LA-MB} and \ref{P:Rw-from-weak-to-strong}.

\begin{theorem} \label{T:Rw-Orlicz-reduction-infty}
Let $A$ and $B$ be Young functions, let $\omega$ be an admissible weight, and let $A_\omega$ be the Young function given by~\eqref{E:Aw-def}.
The following conditions are equivalent.
\begin{enumerate}
\item There exists a constant $c_1$ such that
\begin{equation} \label{E:Rw-LA-LB-infty-2}
	\left\|\int_{t}^{\infty} g(\tau)\omega(\tau)\,\d\tau\right\|_{L^B(0,\infty)}
		\le c_1 \|g\|_{L^A(0,\infty)}
\end{equation}
for every $g\in L^A(0,\infty)$.
\item The function $A$ satisfies condition \eqref{E:A-w} and there exists a constant $c_2$ such that
\begin{equation}
	B(t) \le A_\omega(c_2t)
		\quad\text{for $t\ge 0$.}
\end{equation}
\end{enumerate}
Moreover, the constants $c_1$ and $c_2$ depend only on each other.
\end{theorem}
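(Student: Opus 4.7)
The plan is to derive the equivalence by a two-way reduction to the weak-type inequality
\begin{equation*}
    \left\|\int_{t}^{\infty} g(\tau)\omega(\tau)\,\d\tau\right\|_{M^B(0,\infty)}
        \le c \|g\|_{L^A(0,\infty)},
\end{equation*}
to which Proposition~\ref{P:Rw-LA-MB} already associates the pointwise condition $B(t)\le A_\omega(c_2 t)$ together with \eqref{E:A-w}.

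For the implication (ii)$\Rightarrow$(i), I would first apply Proposition~\ref{P:Rw-LA-MB} directly: assumption (ii) yields the weak-type boundedness $R\colon L^A(0,\infty)\to M^B(0,\infty)$, where $R$ is the operator in \eqref{R}. Then Proposition~\ref{P:Rw-from-weak-to-strong}, which is precisely the self-improvement from the weak Orlicz target $M^B$ to the strong Orlicz target $L^B$ (with quantitative control of the constants), upgrades this to $R\colon L^A(0,\infty)\to L^B(0,\infty)$. Explicitly, the constant $c_2$ in (ii) is fed into Proposition~\ref{P:Rw-LA-MB} to produce a constant $c_1'$ in the weak-type inequality, and then Proposition~\ref{P:Rw-from-weak-to-strong} outputs the desired $c_1$ in (i) depending only on $c_1'$, hence only on $c_2$.

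For the converse direction (i)$\Rightarrow$(ii), the key observation is the continuous embedding $L^B(0,\infty)\hookrightarrow M^B(0,\infty)$ from~\eqref{E:Orlicz-char}, valid for every Young function $B$ and every measure space. Therefore, (i) automatically implies the weak-type inequality \eqref{E:Rw-LA-MB-infty} of Proposition~\ref{P:Rw-LA-MB} with a constant $c_1'$ comparable to $c_1$. Applying the already proved Proposition~\ref{P:Rw-LA-MB} in the direction from the weak-type estimate to the pointwise inequality then delivers both condition \eqref{E:A-w} on $A$ and the pointwise domination $B(t)\le A_\omega(c_2 t)$ with $c_2$ depending only on $c_1'$, and hence only on $c_1$.

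There is no substantive obstacle here since both pieces have been proved separately; the only thing to check is the tracking of constants, namely that the chain ``$c_1\Rightarrow c_1'\Rightarrow c_2$'' and ``$c_2\Rightarrow c_1'\Rightarrow c_1$'' goes through with the dependences claimed in the statement. This is immediate from the fact that both Proposition~\ref{P:Rw-LA-MB} and Proposition~\ref{P:Rw-from-weak-to-strong} assert that the involved constants depend only on one another, so composing these dependences yields the required conclusion.
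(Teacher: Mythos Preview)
Your proposal is correct and matches the paper's approach exactly: the paper states that Theorem~\ref{T:Rw-Orlicz-reduction-infty} is ``a straightforward consequence of Propositions~\ref{P:Rw-LA-MB} and~\ref{P:Rw-from-weak-to-strong}'' without further detail, and you have spelled out precisely this straightforward combination, including the use of the embedding $L^B(0,\infty)\to M^B(0,\infty)$ from~\eqref{E:Orlicz-char} for the direction (i)$\Rightarrow$(ii).
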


We are now in a position to prove Theorem~\ref{T:Rw-Orlicz-reduction}.

\begin{proof}[Proof of Theorem~\ref{T:Rw-Orlicz-reduction}]
Throughout this proof, $c$ denotes a constant whose value may differ at various occurrences.
Assume that $A_\omega$ and $B$ satisfy condition~\eqref{E:BleAw}, \ie there exists  $t_0\ge 0$ such that $B(t)\le A_\omega(ct)$ for $t\ge t_0$.
Let $\widehat A$ and $\widehat B$ be Young functions that agree with $A$ and $B$ near infinity, and such that $\widehat A$ obeys condition~\eqref{E:A-w} and
\begin{equation} \label{E:BleAw-hats}
	\widehat B(t)\le \widehat A_\omega(ct)
		\quad\text{for $t\ge 0$}.
\end{equation}
Here, $\widehat A_\omega$ denotes the Young function associated with $\widehat A$ as in \eqref{E:Aw-def}.
By Theorem~\ref{T:Rw-Orlicz-reduction-infty}, condition~\eqref{E:BleAw-hats} ensures that
\begin{equation} \label{E:Rw-LA-MB-infty-hats}
	\left\|\int_{t}^{\infty} g(\tau)\omega(\tau)\,\d\tau\right\|_{L^{\widehat B}(0,\infty)}
		\le  c\|g\|_{L^{\widehat A}(0,\infty)}
\end{equation}
for every $g\in L^{\widehat A}(0,\infty)$.
Observe that, if $g\in L^{\widehat A}(0,1)$, then an application of inequality~\eqref{E:Rw-LA-MB-infty-hats} to its extension by $0$ outside $(0,1)$ yields an analogous inequality with $L^{\widehat A}(0,\infty)$ and $L^{\widehat B}(0,\infty)$ replaced with $L^{\widehat A}(0,1)$ and $L^{\widehat B}(0,1)$.
Since the latter spaces agree with $L^A(0,1)$ and $L^B(0,1)$ (up to equivalent norms), inequality~\eqref{E:Rw-LA-LB} follows.

Conversely, assume that inequality~\eqref{E:Rw-LA-LB} holds.
We may assume that $A$ satisfies condition~\eqref{E:A-w}, since the function $A$ can be modified near zero, if necessary, without changing the corresponding Orlicz space $L^A(0,1)$, up to equivalent norms.
By property~\eqref{E:novabis},
\begin{equation} \label{E:Rw-LA-LB-dual}
	\left\|\omega(s)\int_{0}^{s} g(r)\,\d r \right\|_{L^{\tilde A}(0,1)}
		\le c \|g\|_{L^{\tilde B}(0,1)}
\end{equation}
for every $g\in L^{\tilde B}(0,1)$.
Next, setting $g=\chi_{(0,\rho)}$ for $\rho\in(0,1)$ in inequality~\eqref{E:Rw-LA-LB-dual} results in
\begin{equation*}
	\rho \|\omega\|_{L^{\tilde A}(\rho,1)}
		\le \left\|\omega(s)\int_{0}^{s} \chi_{(0,\rho)}(r)\,\d r\right\|_{L^{\tilde A}(0,1)}
		\le c \|\chi_{(0,\rho)}\|_{L^{\tilde B}(0,1)}
		\le c\rho B^{-1}\bigl( \tfrac{1}{\rho} \bigr)
	\quad\text{for $\rho\in(0,1)$}.
\end{equation*}
Since, owing to assumption \eqref{E:A-w}, $\|\omega\|_{L^{\tilde A}(1,\infty)}<\infty$, we have that
\begin{equation} \label{E:Gw-leBi}
	G_\omega\bigl( \tfrac{1}{\rho} \bigr)
		= \|\omega\|_{L^{\tilde A}(\rho,\infty)}
		\le cB^{-1}\bigl( \tfrac{1}{\rho} \bigr)
		\quad\text{for $\rho\in(0,1)$}.
\end{equation}
Hence, equation~\eqref{E:BleAw} follows.
\end{proof}

\begin{proof}[Proof of Theorem~\ref{T:sou-Orlicz-reduction}]
Owing to Theorem~\ref{T:reduction-principle}, embedding~\eqref{E:embedding-orlicz} holds if and only if the operator $\sou$ is bounded between $L^A(0,1)$ and
$L^B(0,1)$.
In turn, the operator $\sou$ is bounded if and only if both the operator $R$, defined as in~\eqref{E:Rw-def} with $\omega$ given by \eqref{E:w-log}, and its adjoint $R'$, having the form~\eqref{E:Rw'-def}, are bounded.
Now, Theorem~\ref{T:Rw-Orlicz-reduction} asserts that $R$ is bounded from $L^A(0,1)$ into $L^B(0,1)$ if and only if condition \eqref{E:sou-red-1} holds.
Condition~\eqref{E:sou-red-2} is therefore necessary and sufficient for the boundedness of $R'$.
Note that condition~\eqref{E:A-w} agrees with~\eqref{E:A-0} with the special choice \eqref{E:w-log}.

To prove the assertion concerning the case when $A \in \nabla_2$, observe that
\begin{equation} \label{E:sou-RwP}
	\sou \approx R\circ P,
\end{equation}
with absolute equivalence constants, where $P$ is the averaging operator given by $Pg(s)=\frac 1s\int_{0}^{s} g(r)\,\d r$ for $s\in(0,1)$ and $g\in \Mpl(0,1)$.
Indeed,
\begin{align*}
	RPg(s)
		& = \int_{s}^{1} \ir \int_{0}^{r} g(\varrho)\,\d\varrho\, \omega(r)\,\d r
				\\
		& = \int_{0}^{s} g(\varrho)\,\d\varrho \int_{s}^{1} \frac{\omega(r)}{r}\,\d r
				+ \int_{s}^{1} g(\varrho)\int_{\varrho}^{1} \frac{\omega(r)}{r}\,\d r\,\d\varrho
				\\
		& \approx \omega(s) \int_{0}^{s} g(\varrho)\,\d\varrho
				+ \int_{s}^{1} g(\varrho)\omega(\varrho)\,\d\varrho
			= \sou g(s)
		\quad\text{for $s\in(0,1)$}.
\end{align*}
Here, we have made use of the fact that, if $\omega$ is given by~\eqref{E:w-log}, then
\begin{equation}
	\int_{\varrho}^{1} \frac{\omega(r)}{r}\,\d r
		\approx \omega(\varrho)
		\quad\text{for $\varrho\in(0, \thalf)$},
\end{equation}
with absolute equivalence constants.
Therefore, inequality \eqref{E:embedding-orlicz} holds if and only if $R\circ P$ is bounded from $L^A(0,1)$ into $L^B(0,1)$.
If $A\in\nabla_2$, then the operator $P$ is bounded on $L^A(0,1)$, see \eg~\citep{Gal:88}.
Consequently, inequality \eqref{E:embedding-orlicz} holds provided that $R$ is bounded from $L^A(0,1)$ into $L^B(0,1)$, and this boundedness is equivalent to condition~\eqref{E:sou-red-1}.
Conversely, the necessity of \eqref{E:sou-red-1} is a consequence of the first part of the statement.

The assertion about the case when $B \in \Delta_2$ can be verified via a duality argument.
Since $\sou$ is self-adjoint, inequality~\eqref{E:embedding-orlicz} holds if and only if $\sou$ is bounded from $L^{\tilde B}(0,1)$ into $L^{\tilde A}(0,1)$.
Since $B\in\Delta_2$, we have that $\widetilde B \in \nabla_2$.
Hence, $P$ is bounded on $L^{\tilde B}(0,1)$.
Therefore, thanks to condition~\eqref{E:sou-RwP} again, inequality~\eqref{E:embedding-orlicz} holds if $R$ is bounded from $L^{\tilde B}(0,1)$ into $L^{\tilde A}(0,1)$.This fact is guaranteed under condition~\eqref{E:sou-red-2}.
The necessity of the latter condition~\eqref{E:sou-red-2} follows from the first part of the statement.
\end{proof}

\section{Ornstein--Uhlenbeck embeddings in Lorentz-Zygmund spaces}\label{LZ}

Here we exploit our general results to derive Sobolev inequalities for the Ornstein--Uhlenbeck  operator in Lorentz-Zygmund spaces.
This is the subject of the following theorem.

\begin{theorem}[Optimal embeddings for Lorentz--Zygmund spaces]\label{T:optimal-embedding-lz}
Let $p,q\in[1,\infty]$ and $\alpha,\beta\in\R$.
Then
\begin{equation}\label{E:optimal-embedding-lz}
    \Wlgn L^{p,q;\alpha,\beta}\to
        \begin{cases}
            L^{1,1;0,\beta-1} &\text{if $p=q=1$, $\alpha=0$, $\beta\ge1$}
                \\
            L^{1,1;\alpha,\beta} &\text{if $p=q=1$, $\alpha>0$, $\beta\in\R$}
                \\
            L^{p,q;\alpha+1,\beta}&\text{if $p\in(1,\infty)$, $\alpha\in\R$, $\beta\in\R$}
                 \\
            L^{\infty,\infty;\alpha,\beta}&\text{if $p=q=\infty$, $\alpha<0$, $\beta\in\R$}
                 \\
            L^{\infty,\infty;0,\beta-1}&\text{if $p=q=\infty$, $\alpha=0$, $\beta\le0$,}
       \end{cases}
\end{equation}
where all the spaces are over $\RG$.
Moreover, in each case, the target space is optimal (smallest) among all rearrangement-invariant spaces, and, simultaneously, the domain is optimal (largest) among all rearrangement-invariant spaces.
\end{theorem}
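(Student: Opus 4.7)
My strategy is to combine the reduction principle (Theorem~\ref{T:reduction-principle}) with the optimality characterizations in Theorems~\ref{T:OU-target} and~\ref{C:OU-domain}. By the reduction principle, embedding~\eqref{E:optimal-embedding-lz} is equivalent to the boundedness
\[
	\sou\colon L^{p,q;\alpha,\beta}(0,1) \to L^{p',q';\alpha',\beta'}(0,1),
\]
with $(p',q',\alpha',\beta')$ the exponents of the claimed target. I would decompose $\sou = T_1 + T_2$, where
\[
	T_1 g(s) = \frac{1}{s\ell(s)}\int_0^s g(r)\,\d r,
	\qquad
	T_2 g(s) = \int_s^1 \frac{g(r)}{r\ell(r)}\,\d r.
\]
For nonincreasing $g$, $T_1 g(s) = g^{**}(s)/\ell(s)$, while $T_2$ is a weighted Hardy operator in the opposite direction. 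Classical weighted Hardy inequalities in the Lorentz-Zygmund scale then yield the upper bound: $T_1$ shifts the exponent $\alpha$ by $+1$, which accounts for the target in the generic cases $p=q=1$, $\alpha>0$, the case $p\in(1,\infty)$, and $p=q=\infty$, $\alpha<0$; in these regimes $T_2$ is of lower order. In the two degenerate cases $p=q=1$, $\alpha=0$, $\beta\ge 1$ and $p=q=\infty$, $\alpha=0$, $\beta\le 0$, the value $\alpha+1=1$ would fall outside the admissibility windows listed in Section~\ref{spaces}, so it is the second logarithmic factor that absorbs the shift, producing the $\beta-1$ exponent coming from $T_2$.

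For optimality, I would apply Theorems~\ref{T:OU-target} and~\ref{C:OU-domain}. The optimal rearrangement-invariant target $\xs$ associated to the domain $X = L^{p,q;\alpha,\beta}$ is determined by
\[
	\|g\|_{\xs'(0,1)} = \|\sou g^{*}\|_{X'(0,1)}.
\]
Since $X'$ is itself a Lorentz-Zygmund space (with conjugate exponents and opposite signs of $\alpha,\beta$), the core step is to prove two-sided bounds showing that $\|\sou g^{*}\|_{X'(0,1)}$ is equivalent to the Lorentz-Zygmund norm of $g$ coinciding with the associate of the claimed target in~\eqref{E:optimal-embedding-lz}. The upper bounds come from the sufficiency step, while the lower bounds are obtained by testing $\sou$ on functions of the form $g=\chi_{(0,t)}$ (which fixes the fundamental function of $\xs$) and on near-extremal functions such as $r\mapsto r^{-1/p}\ell(r)^{-\alpha-1/q}\ell\ell(r)^{-\beta}$ restricted to suitable subintervals $(0,t)$, which saturate the Lorentz-Zygmund quasinorm. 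An entirely parallel argument using Theorem~\ref{C:OU-domain} and the self-adjointness~\eqref{E:S-selfadojoint} of $\sou$ yields the optimality of the domain for each prescribed target.

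The main obstacle will be the case analysis. For each of the five parameter regimes one must separately verify that conditions~\eqref{E:target-condition} and~\eqref{E:domain-condition}, namely $\ell\ell\in X'(0,1)$ and $\ell\ell\in Y(0,1)$, hold, so that Theorems~\ref{T:OU-target} and~\ref{C:OU-domain} apply, and that the claimed target belongs to an admissible Lorentz-Zygmund window. The bookkeeping is delicate at the endpoints $p=1$ and $p=\infty$, and particularly so in the two degenerate cases, where the shift is carried by $\ell\ell$ rather than by $\ell$: there one must check that no additional logarithmic factor can be inserted into the target without invalidating the inequality, which is what identifies the target as the smallest possible rearrangement-invariant space and, dually, the domain as the largest.
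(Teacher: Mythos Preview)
Your overall strategy---reduction principle plus Theorems~\ref{T:OU-target} and~\ref{C:OU-domain}, with domain optimality handled by the self-adjointness~\eqref{E:S-selfadojoint}---is exactly the paper's approach. However, your heuristic that ``$T_1$ shifts the exponent $\alpha$ by $+1$'' in the cases $p=q=1$, $\alpha>0$ and $p=q=\infty$, $\alpha<0$ is wrong: look again at~\eqref{E:optimal-embedding-lz}. In those two rows the target \emph{coincides} with the domain (no gain, no loss), so nothing is shifted. The $+1$ shift in $\alpha$ occurs only when $p\in(1,\infty)$. This matters because your upper-bound argument, as stated, would overshoot in the two endpoint rows and yield a target strictly smaller than the true optimal one, which is false. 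The correct mechanism in those rows is that $\sou$ maps $X'(0,1)$ to itself (up to equivalence), so $\xs'=X'$ and hence $\xs=X$.

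Your proposed lower-bound technique---testing $\sou$ on characteristic functions and near-extremal power-log functions---is a legitimate route in principle, but it is vaguer than what the paper does and testing on $\chi_{(0,t)}$ alone only pins down the fundamental function, not the full norm. The paper instead proves a two-sided norm equivalence $\|\sou g^*\|_{X'(0,1)}\approx\|g\|_{Z(0,1)}$ for \emph{arbitrary} $g$, obtaining the lower bound by a clean splitting trick: one restricts the $T_2$-integral $\int_s^{1}g^*(r)/(r\ell(r))\,\d r$ to the subinterval $(s,\eta(s))$ with $\eta(s)=\ell\ell^{-1}\bigl(\tfrac12\ell\ell(s)\bigr)$ (or $(s,\sigma(s))$ with $\sigma(s)=\ell\ell^{-1}(\ell\ell(s)-1)$), on which $\int \d r/(r\ell(r))$ equals a fixed fraction of $\ell\ell(s)$ (respectively~$1$), and then uses monotonicity of $g^*$ to pull it out. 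A change of variable then recovers the full Lorentz--Zygmund norm of $g$. This avoids having to guess extremal test functions case by case and gives the sharp identification directly.
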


Let us mention that some cases of the embeddings in~\eqref{E:optimal-embedding-lz} can be found in~\citep{Bla:07}.
However, their optimality is not discussed in that paper.

In this section, without further explicit reference, we shall repeatedly use well-known characterizations of the associate spaces of Lorentz--Zygmund spaces, which can be found for instance in~\citep[Section~9.6]{Pi:13}.
We will also use without further warnings the fact that the function $\sou g$ is non-increasing for every function $g\in \MM_+(0,1)$.

\begin{proof}[Proof of Theorem \ref{T:optimal-embedding-lz}]
Let $\beta\ge0$ and set $X(0,1)=L^{1,1;0,\beta+1}(0,1)$.
Then condition~\eqref{E:target-condition} is satisfied, and by equation \eqref{july1} one has that
\begin{equation*}
	\|g\|_{X_{\lgn}'(0,1)}
		= \|\sou g^{*}\|_{L^{\infty,\infty;0,-\beta-1}(0,1)}
		= \sup_{s\in(0,1)}\ell\ell(s)^{-\beta-1}\sou g^*(s)
\end{equation*}
for $g\in\Mpl(0,1)$.
Next,
\begin{equation*}
	g^*(r) \le  \|g\|_{L^{\infty,\infty;0,-\beta}(0,1)}\, \ell\ell^{\beta}(r)
		\quad\text{for $r\in(0,1)$,}
\end{equation*}
whence
\begin{align*}
	\|g\|_{X_{\lgn}'(0,1)}
		& = \sup_{s\in(0,1)} \frac{1}{\ell\ell(s)^{\beta+1}}
				\left(
					\frac{1}{s\ell(s)} \int_{0}^{s} g^*(r)\,\d r
						+ \int_{s}^{1} \frac{g^*(r)}{r\ell(r)}\d r
				\right)
    	\\
    & \le \sup_{s\in(0,1)} \frac{1}{\ell\ell(s)^{\beta+1}}
				\left(
					\frac{1}{s\ell(s)} \int_{0}^{s} \ell\ell^{\beta}(r)\,\d r
					+ \int_{s}^{1} \frac{\ell\ell^{\beta}(r)}{r\ell(r)}\d r
				\right)
				\|g\|_{L^{\infty,\infty;0,-\beta}(0,1)}
			\\
    & \lesssim\, \|g\|_{L^{\infty,\infty;0,-\beta}(0,1)}
\end{align*}
for $g\in\Mpl(0,1)$, up to a constant depending on $\beta$.
This proves the embedding
\begin{equation} \label{E:emb-110b}
	X_{\lgn}\RG \to L^{1,1;0,\beta}\RG.
\end{equation}
In order to establish the converse embedding, we define the function $\eta\colon\left(0,e^{1-e}\right)\to(0,1)$ by
\begin{equation}\label{E:eta}
    \eta(s) = \ell\ell^{-1}\bigl(\tfrac12\ell\ell(s)\bigr) \quad\text{for $s\in(0,e^{1-e})$,}
\end{equation}
where $\ell\ell^{-1}$ denotes the inverse of the function $\ell\ell$.
Then $\eta$ is increasing and $\eta\bigl(\left(0,e^{1-e}\right)\bigr)=(0,1)$.
Moreover, one has
\begin{equation*}
	\ell\ell(\eta(s))
		= \thalf\ell\ell(s)
	\quad\text{for $s\in\left(0,e^{1-e}\right)$}
\end{equation*}
and
\begin{equation*}
    \ell\ell(\eta^{-1}(\tau)) = 2\ell\ell(\tau)
	\quad\text{for $\tau\in\left(0,1\right)$.}
\end{equation*}
Thus,
\begin{align*}
   \|g\|_{X_{\lgn}'(0,1)}
		& \ge \sup_{s\in(0,e^{1-e})} \frac{1}{\ell\ell(s)^{\beta+1}} \int_{s}^{\eta(s)} \frac{g^*(r)}{r\ell(r)}\d r
        \\
    &\ge \sup_{s\in(0,e^{1-e})} g^{*}\bigl(\eta(s)\bigr) \frac{1}{\ell\ell(s)^{\beta+1}} \int_{s}^{\eta(s)} \frac{\d r}{r\ell(r)}
         \\
    &= \sup_{s\in(0,e^{1-e})} g^{*}\bigl(\eta(s)\bigr)\ell\ell(s)^{-\beta-1}\bigl[\ell\ell(s)-\ell\ell\bigl(\eta(s)\bigr)\bigr]
         \\
    &= \thalf \sup_{s\in(0,e^{1-e})} g^{*}\bigl(\eta(s)\bigr)\ell\ell(s)^{-\beta}
     = \thalf \sup_{\tau\in(0,1)}g^{*}(\tau)\ell\ell\bigl(\eta^{-1}(\tau)\bigr)^{-\beta}
        \\
    & = 2^{-1-\beta}\sup_{s\in(0,1)}g^{*}(s)\ell\ell(s)^{-\beta} = 2^{-1-\beta}\|g\|_{L^{\infty,\infty;0,-\beta}(0,1)}.
\end{align*}
This chain implies the embedding $L^{1,1;0,\beta}\RG \to X_{\lgn}\RG$.
Coupling the latter embedding with \eqref{E:emb-110b} yields
\begin{equation*}
    X_{\lgn}\RG = L^{1,1;0,\beta}\RG,
\end{equation*}
up to equivalent norms.
Owing to Theorem~\ref{T:OU-target}, this shows that the target space in the first embedding in~\eqref{E:optimal-embedding-lz} is optimal among all rearrangement-invariant target spaces.

Now let $\alpha>0$, $\beta\in\R$ and $X(0,1)=L^{1,1;\alpha,\beta}(0,1)$.
Then condition~\eqref{E:target-condition} is satisfied again, and
\begin{align*}
	\|g\|_{X_{\lgn}'(0,1)}
		& = \|\sou g^*\|_{L^{\infty,\infty;-\alpha,-\beta}(0,1)}
				\\
		& = \sup_{s\in(0,1)}
				\left(
					\frac{1}{s\ell(s)^{\alpha+1}\ell\ell(s)^{\beta}} \int_{0}^{s} g^{*}(r)\,\d r
					  + \frac{1}{\ell(s)^{\alpha}\ell\ell(s)^{\beta}} \int_{s}^{1} \frac{g^{*}(r)}{r\ell(r)}\d r
				\right).
\end{align*}
On the other hand,
\begin{equation*}
	g^*(r)\le \|g\|_{L^{1,1;\alpha,\beta}(0,1)}\, \ell(r)^{\alpha}\ell\ell(r)^{\beta}
		\quad\text{for $r\in(0,1)$}.
\end{equation*}
Therefore,
\begin{align*}
	& \|g\|_{X_{\lgn}'(0,1)}
			\\
	&\quad \le \sup_{s\in(0,1)}
			\left(
				\frac{1}{s\ell(s)^{\alpha+1}\ell\ell(s)^{\beta}} \int_{0}^{s}\ell(r)^{\alpha}\ell\ell(r)^{\beta}\,\d r
				+ \frac{1}{\ell(s)^{\alpha}\ell\ell(s)^{\beta}} \int_{s}^{1} \frac{\ell\ell(r)^{\beta}}{r\ell(r)^{1-\alpha}}\d r
			\right)
			\|g\|_{L^{1,1;\alpha,\beta}(0,1)}
			\\
	&\quad \lesssim \, \|g\|_{L^{1,1;\alpha,\beta}(0,1)},
\end{align*}
up to a constant depending on $\alpha$ and $\beta$.
The embedding
\begin{equation} \label{E:emb-11ab}
	X_{\lgn}\RG\to L^{1,1;\alpha,\beta}\RG
\end{equation}
is thus established.
In order to prove the converse embedding, define the function $\sigma\colon(0,e^{1-e})\to(0,1)$ as
\begin{equation}\label{E:sigma}
	\sigma(s) = \ell\ell^{-1}\bigl(\ell\ell(s)-1\bigr)
		\quad\text{for $s \in (0,e^{1-e})$.}
\end{equation}
Then $\sigma$ is increasing and $\sigma\bigl((0,e^{1-e})\bigr)=(0,1)$.
Thus,
\begin{align*}
   \|g\|_{X_{\lgn}'(0,1)}
		&\ge \sup_{s\in(0,e^{1-e})} \frac{1}{\ell(s)^{\alpha}\ell\ell(s)^{\beta}}
						\int_{s}^{\sigma(s)} \frac{g^*(r)}{r\ell(r)}\d r
     \ge \sup_{s\in(0,e^{1-e})} \frac{g^{*}\bigl(\sigma(s)\bigr)}{\ell(s)^{\alpha}\ell\ell(s)^{\beta}}
						\int_{s}^{\sigma(s)} \frac{\d r}{r\ell(r)}
         \\
    &= \sup_{s\in(0,e^{1-e})}g^{*}\bigl(\sigma(s)\bigr)\ell(s)^{-\alpha}\ell\ell(s)^{-\beta}\bigl[\ell\ell(s)-\ell\ell\bigl(\sigma(s)\bigr)\bigr]
         \\
    &= \sup_{s\in(0,e^{1-e})}g^{*}\bigl(\sigma(s)\bigr)\ell(s)^{-\alpha}\ell\ell(s)^{-\beta}
    = \sup_{s\in(0,1)}g^{*}(s)\ell\bigl(\sigma^{-1}(s)\bigr)^{-\alpha}\ell\ell\bigl(\sigma^{-1}(s)\bigr)^{-\beta}.
\end{align*}
Since
\begin{equation*}
	\sigma^{-1}(r) = \ell\ell^{-1}\bigl(\ell\ell(r)+1\bigr)
		\quad\text{for $r\in(0,1)$},
\end{equation*}
one has that
\begin{equation*}
	\ell\bigl(\sigma^{-1}(r)\bigr) = e\ell(r)
		\quad\text{and}\quad
	\ell\ell\bigl(\sigma^{-1}(r)\bigr) = 1 + \ell\ell(r)
		\quad\text{for $r\in(0,1)$}.
\end{equation*}
Consequently,
\begin{align*}
   \|g\|_{X_{\lgn}'(0,1)} \gtrsim \sup_{s\in(0,1)}g^{*}(s)\ell(s)^{-\alpha}\ell\ell(s)^{\beta}
    = \|g\|_{L^{\infty,\infty;-\alpha,\beta}(0,1)},
\end{align*}
up to a constant depending on $\alpha$ and $\beta$.
This yields the converse embedding to~\eqref{E:emb-11ab}.
Altogether, we obtain
\begin{equation*}
    X_{\lgn}\RG = L^{1,1;\alpha,\beta}\RG,
\end{equation*}
up to a constant depending on $\alpha$ and $\beta$.
Thanks to Theorem~\ref{T:OU-target}, this shows that the target space in the second embedding in~\eqref{E:optimal-embedding-lz} is optimal among all rearrangement-invariant target spaces.

Assume that $p\in(1,\infty)$, $q\in[1,\infty]$, $\alpha,\beta\in\R$, and $X(0,1)=L^{p,q;\alpha,\beta}(0,1)$.
Then~\eqref{E:target-condition} is satisfied and
\begin{align*}
	\|g\|_{X_{\lgn}'(0,1)}
		& = \|\sou g^{*}\|_{L^{p',q';-\alpha,-\beta}(0,1 )}
			\\
    & \approx
				\left\|
					\frac{s^{\frac{1}{p'}-\frac{1}{q'}-1}}{\ell(s)^{\alpha+1}\ell\ell(s)^{\beta}}
						\int_{0}^{s} g^{*}(r) \,\d r
				\right\|_{L^{q'}(0,1 )}
			+ \left\|
					\frac{s^{\frac{1}{p'}-\frac{1}{q'}}}{\ell(s)^{\alpha}\ell\ell(s)^{\beta}}
						\int_{s}^{1} \frac{g^{*}(r)}{r\ell(r)}\d r
				\right\|_{L^{q'}(0,1 )}
\end{align*}
for $g\in\Mpl(0,1)$, with equivalence constants depending on $p,q,\alpha,\beta$.
From classical weighted Hardy-type inequalities -- see \eg \citep[Theorems~1.3.2.2 and~1.3.2.3]{Maz:11} -- we deduce that
\begin{equation*}
	\|g\|_{X_{\lgn}'(0,1)}
		\lesssim
			\left\|
				s^{\frac{1}{p'}-\frac{1}{q'}} \ell(s)^{-\alpha-1}\ell\ell(s)^{-\beta} g^{*}(s)
			\right\|_{L^{q'}(0,1)}
	\quad\text{for $g\in\Mpl(0,1)$},
\end{equation*}
up to a constant depending on $p,q,\alpha,\beta$.
Since $g^{**}\ge g^{*}$, one also has that
\begin{equation*}
	\|g\|_{X_{\lgn}'(0,1)}
		\gtrsim
			\left\|
				s^{\frac{1}{p'}-\frac{1}{q'}} \ell(s)^{-\alpha-1}\ell\ell(s)^{-\beta} g^{*}(s)
			\right\|_{L^{q'}(0,1)}
	\quad\text{for $g\in\Mpl(0,1)$,}
\end{equation*}
up to a constant depending on $p,q,\alpha,\beta$.
Altogether,
\begin{equation*}
	\|g\|_{X_{\lgn}'(0,1)}
		\approx
			\left\|
				s^{\frac{1}{p'}-\frac{1}{q'}} \ell(s)^{-\alpha-1}\ell\ell(s)^{-\beta} g^{*}(s)
			\right\|_{L^{q'}(0,1)}
	\quad\text{for $g\in\Mpl(0,1)$,}
\end{equation*}
with equivalence constants depending on $p,q,\alpha,\beta$.
Thus, $X_{\lgn}\RG = L^{p,q;\alpha+1,\beta}\RG$, up to equivalent norms, and the optimality of this target in the third embedding in~\eqref{E:optimal-embedding-lz} follows by Theorem~\ref{T:OU-target}.

Let $\alpha<0$ and let $X(0,1)=L^{\infty,\infty;\alpha,\beta}(0,1)$.
Then condition~\eqref{E:target-condition} is fulfilled and
\begin{align*}
	\|g\|_{X_{\lgn}'(0,1)}
		& = \|\sou g^{*}\|_{L^{1,1;-\alpha,-\beta}(0,1)}
			= \|\ell(s)^{-\alpha}\ell\ell(s)^{-\beta}\sou g^{*}(s)\|_{L^{1}(0,1)}
        \\
    & \approx \int_{0}^{1} \frac{1}{s\ell(s)^{1+\alpha}\ell\ell(s)^{\beta}}
								\int_{0}^{s}g^{*}(r)\,\d r\,\d s
        + \int_{0}^{1} \frac{1}{\ell(s)^{\alpha}\ell\ell(s)^{\beta}}
						\int_{s}^{1} \frac{g^{*}(r)}{r\ell(r)}\,\d r\,\d s
        \\
    & = \int_{0}^{1} g^{*}(r)
					\int_{r}^{1} \frac{\d s}{s\ell(s)^{1+\alpha}\ell\ell(s)^{\beta}}\,\d r
        + \int_{0}^{1} \frac{g^{*}(r)}{r\ell(r)}
						\int_{0}^{r} \frac{\d s}{\ell(s)^{\alpha}\ell\ell(s)^{\beta}}\,\d r
        \\
    & \approx \int_{0}^{1} g^{*}(r)
								\int_{r}^{1} \frac{\d s}{s\ell(s)^{1+\alpha}\ell\ell(s)^{\beta}}\,\d r
    \approx \|g\|_{L^{1,1;-\alpha,\beta}(0,1)},
\end{align*}
with equivalence constants depending on $\alpha$ and $\beta$.
Hence $X_{\lgn}'\RG = L^{1,1;-\alpha,-\beta}\RG$, whence $X_{\lgn}\RG = L^{\infty,\infty;\alpha,\beta}\RG$.
By Theorem~\ref{T:OU-target}, the target space in the fourth embedding in~\eqref{E:optimal-embedding-lz} is optimal.

Let $\beta\le0$ and let $X(0,1)=L^{\infty,\infty;0,\beta}(0,1)$.
Then~\eqref{E:target-condition} is satisfied and
\begin{align*}
	\|g\|_{X_{\lgn}'(0,1)}
		& = \|\sou g^{*}\|_{L^{1,1;0,-\beta}(0,1)}
			= \|\ell\ell(s)^{-\beta}\sou g^{*}\|_{L^{1}(0,1)}
        \\
    & \approx \int_{0}^{1} \frac{1}{s\ell(s)\ell\ell(s)^{\beta}}
								\int_{0}^{s}g^{*}(r)\,\d r\,\d s
        + \int_{0}^{1} \frac{1}{\ell\ell(s)^{\beta}}
						\int_{s}^{1} \frac{g^{*}(r)}{r\ell(r)}\d r\,\d s
        \\
    & = \int_{0}^{1} g^{*}(r)
					\int_{r}^{1} \frac{\d s}{s\ell(s)\ell\ell(s)^{\beta}}\,\d r
        + \int_{0}^{1} \frac{g^{*}(r)}{r\ell(r)}
						\int_{0}^{r} \frac{\d s}{\ell\ell(s)^{\beta}}\,\d r
        \\
    & \approx \int_{0}^{1} g^{*}(r)
								\int_{r}^{1} \frac{\d s}{s\ell(s)\ell\ell(s)^{\beta}}\,\d r
			\approx \int_{0}^{1} g^{*}(r) \ell\ell(r)^{1-\beta}\,\d r
			= \|g\|_{L^{1,1;0,1-\beta}(0,1)}
\end{align*}
with equivalence constants depending on $\beta$.
Therefore, $X_{\lgn}'\RG = L^{1,1;0,1-\beta}\RG$, and hence $X_{\lgn}\RG = L^{\infty,\infty;0,\beta-1}\RG$.
This implies, via Theorem~\ref{T:OU-target}, the optimality of the target space in the fifth embedding in~\eqref{E:optimal-embedding-lz}.

We have shown that all the embeddings in \eqref{E:optimal-embedding-lz} hold, and that each target space is optimal (smallest possible) among all rearrangement-invariant spaces.
To finish the proof, we need only to verify that also the domain spaces are optimal.
Owing to the fact that the operator $\sou$ is self-adjoint, the optimality of a domain in an embedding is equivalent to that of the target in the embedding where the domain space and the target space are replaced by the associate of the target space and the associate of the domain space, respectively.
Hence, the optimality of the domain spaces follows from that of the target spaces via a well-known characterization of the respective associate spaces.
We omit the details, for brevity.
\end{proof}

\section{Ornstein--Uhlenbeck embeddings in Marcinkiewicz spaces} \label{endpoint}

We conclude our discussion by exhibiting optimal Ornstein--Uhlenbeck embeddings where either the domain, or the target is a Marcinkiewicz space.
This is the content of the following result.

\begin{theorem}[Optimal embeddings for Marcinkiewicz spaces]\label{T:optimal-embeddings-marcinkiewicz}
Let $\varphi$ and $\theta$ be quasiconcave functions on $(0,1)$.
\begin{enumerate}
\item\label{en:optimal-M-target}
Assume that
\begin{equation}\label{E:marcinkiewicz-condition-range}
    \int_{0}^{1}\ell\ell(s)\d\tvp(s)<\infty.
\end{equation}
Let $\psi\colon (0,1) \to [0, \infty)$ be the function given by
\begin{equation}\label{E:tilde-psi}
  \psi(s)=\int_{0}^{s}\frac{\d r}{\varphi(r)\ell(r)}+s\int_{s}^{1}\frac {\d\tvp(r)}{r\ell(r)}
	\quad\text{for $s\in(0,1)$}.
\end{equation}
Then
\begin{equation}\label{dec41}
	\Wlgn  M_\varphi \RG \to M_{\tps}\RG,
\end{equation}
and $M_{\tps}\RG$ is the optimal rearrangement-invariant target space in~\eqref{dec41}.
Here, $\tvp$ and $\tps$ denote the functions associated with $\varphi$ and $\psi$ as in~\eqref{dec40}.
\item\label{en:optimal-M-domain}
Assume that
\begin{equation}\label{E:marcinkiewicz-condition-domain}
	\sup_{s\in(0,1)}\ell\ell(s)\theta(s)<\infty.
\end{equation}
Then the functional given by
\begin{equation}\label{E:marcinkiewicz-optimal-domain}
	\|g\|_{Z(0,1)}
		= \sup_{s\in(0,1)} \theta(s)\left(\frac{1}{s}\int_{0}^{s}\frac{g^{**}(r)}{\ell(r)}\d r + \int_{s}^{1}\frac{g^*(r)}{r\ell(r)}\d r\right)
\end{equation}
for $g\in\Mpl(0,1)$ is a rearrangement-invariant function norm.
Moreover,
\begin{equation}\label{dec42}
	\Wlgn Z \RG \to M_{\theta}\RG,
\end{equation}
and $Z\RG$ is the optimal rearrangement-invariant domain space in~\eqref{dec42}.
\end{enumerate}
\end{theorem}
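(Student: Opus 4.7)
The plan is to reduce both parts to Theorem~\ref{T:OU-target} and Theorem~\ref{C:OU-domain}, which express the optimal rearrangement-invariant target and domain in terms of the operator~$\sou$, combined with the duality identities in~\eqref{dic30} and the observation, evident from~\eqref{E:kernel}, that $\sou g^*$ is non-increasing on $(0,1)$ for every $g\in\Mpl(0,1)$.

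For Part~\ref{en:optimal-M-target}, I will first note that hypothesis~\eqref{E:marcinkiewicz-condition-range} coincides with condition~\eqref{E:target-condition} applied to $X=M_\varphi$, because $(\ell\ell)^*=\ell\ell$ and $X'=\Lambda_{\tvp}$ by~\eqref{dic30}. Theorem~\ref{T:OU-target} will then produce the optimal target $\xs\RG$ with associate norm
\begin{equation*}
	\|g\|_{\xsd(0,1)}
		= \|\sou g^*\|_{\Lambda_{\tvp}(0,1)}
		= \int_0^1 \sou g^*(s)\,\d\tvp(s).
\end{equation*}
A single Fubini step, combined with the identity $\tvp(r)/r=1/\varphi(r)$, will rewrite this as $\int_0^1 g^*(r)h(r)\,\d r$, where
\begin{equation*}
	h(r) = \frac{1}{\varphi(r)\ell(r)} + \int_r^1\frac{\d\tvp(s)}{s\ell(s)}.
\end{equation*}
A second Fubini will then yield
\begin{equation*}
	\int_0^t h(r)\,\d r = \psi(t) + \int_0^t \frac{\d\tvp(s)}{\ell(s)},
\end{equation*}
and the extra term is controlled by $\psi(t)$ itself through the pointwise estimate $\tvp'(s)\le 1/\varphi(s)$, a direct consequence of the relation $\tvp(s)=s/\varphi(s)$ and the monotonicity of $\varphi$. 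Hence the antiderivative of $h$ is equivalent to $\psi$ on $(0,1)$, and the standard equivalence of Lorentz endpoint norms under equivalence of quasi-concave fundamental functions gives $\|g\|_{\xsd(0,1)}\approx \|g\|_{\Lambda_\psi(0,1)}$. Invoking $M_{\tps}'=\Lambda_\psi$ once more identifies $\xs\RG$ with $M_{\tps}\RG$ up to equivalent norms, and embedding~\eqref{dec41} together with its optimality then follow immediately from Theorem~\ref{T:OU-target}.

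Part~\ref{en:optimal-M-domain} will proceed along the same lines, using Theorem~\ref{C:OU-domain} instead. The assumption~\eqref{E:marcinkiewicz-condition-domain} corresponds to condition~\eqref{E:domain-condition} with $Y=M_\theta$ thanks to the elementary estimate $(\ell\ell)^{**}\approx\ell\ell$ on $(0,1)$. The optimal domain $\ys\RG$ will therefore exist and satisfy
\begin{equation*}
	\|g\|_{\ys(0,1)}
		= \|\sou g^*\|_{M_\theta(0,1)}
		= \sup_{s\in(0,1)}\theta(s)(\sou g^*)^{**}(s).
\end{equation*}
A direct calculation of $(\sou g^*)^{**}(s)$---using the identity $\int_0^r g^*(\varrho)\,\d\varrho=rg^{**}(r)$ inside the first summand of $\sou g^*$ and Fubini's theorem on the second---will give, with absolute equivalence constants,
\begin{equation*}
	(\sou g^*)^{**}(s)
		\approx \frac{1}{s}\int_0^s\frac{g^{**}(r)}{\ell(r)}\,\d r
		+ \int_s^1\frac{g^*(r)}{r\ell(r)}\,\d r,
\end{equation*}
the lower bound being automatic since $g^{**}\ge g^*$. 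Consequently $\|g\|_{\ys(0,1)}\approx \|g\|_{Z(0,1)}$, which simultaneously shows that~\eqref{E:marcinkiewicz-optimal-domain} defines a rearrangement-invariant function norm and that $Z\RG$ coincides with $\ys\RG$ up to equivalence; embedding~\eqref{dec42} and the optimality of $Z\RG$ will then follow from Theorem~\ref{C:OU-domain}.

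The main technical subtlety lies in Part~\ref{en:optimal-M-target}, in the passage from the absolutely continuous expression $\int g^*(r)h(r)\,\d r$ back to the Stieltjes integral $\int g^*\,\d\psi$ that defines $\|g\|_{\Lambda_\psi(0,1)}$: the two densities need not coincide when $\d\tvp$ has a non-trivial singular part, so the step genuinely relies on the characterisation of $\Lambda_\varphi$ (via~\eqref{E:fundamental-endpoint-embeddings}) as the smallest rearrangement-invariant space whose fundamental function is equivalent to $\varphi$. Part~\ref{en:optimal-M-domain}, by contrast, reduces to a purely pointwise calculation and presents no additional difficulty.
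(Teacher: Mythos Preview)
Your proposal is correct and follows the same route as the paper: both parts are reduced to Theorems~\ref{T:OU-target} and~\ref{C:OU-domain}, the relevant norms of $\sou g^*$ are unwound by Fubini, and in Part~\ref{en:optimal-M-target} the extra term $\int_0^s\frac{d\tvp(r)}{\ell(r)}$ is absorbed into $\psi$ via the estimate $d\tvp(r)\le \frac{dr}{\varphi(r)}$---exactly the paper's inequality~\eqref{july10}. One small correction concerning your closing remark: the fallback you propose via~\eqref{E:fundamental-endpoint-embeddings} does not do the job, since that statement only sandwiches a space $X$ with $\varphi_X=\varphi$ between $\Lambda_\varphi$ and $M_\varphi$; it does not identify two Lorentz endpoint spaces with merely equivalent weights. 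The clean fix is either to use the integrated form~\eqref{july10} (valid for any quasiconcave $\varphi$, no absolute continuity required) and pass to the weighted inequality $\int_0^s\frac{d\tvp(r)}{\ell(r)}\le\int_0^s\frac{dr}{\varphi(r)\ell(r)}$ via a layer-cake/Fubini argument exploiting that $1/\ell$ is increasing, or---simpler still---to observe that $H\approx\psi$ forces $\bar H\approx\tps$, whence $M_{\bar H}=M_{\tps}$ trivially and $\Lambda_H=\Lambda_\psi$ by~\eqref{dic30} and~\eqref{X''}.
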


\begin{proof}
\ref{en:optimal-M-target}
Set $X(0,1)=M^\varphi(0,1)$.
We use the description of the optimal rearrangement-invariant target $\xs$ given in Theorem~\ref{T:OU-target}.
By the monotonicity of $\sou g^{*}$, the definition of $\sou$ and Fubini's theorem, we have that
\begin{align*}
    \|g\|_{\xs'(0,1)}
        & = \|\sou g^{*}\|_{\Lambda_{\tvp}(0,1)} = \int_{0}^{1}\sou g^{*}(s)\d\tvp(s)
            = \int_{0}^{1}\left(\frac{1}{s\ell(s)}\int_{0}^{s}g^{*}(r)\d r + \int_{s}^{1}\frac{g^{*}(r)}{r\ell(r)}\d r\right)\d\tvp(s)
                \\
        & = \int_{0}^{1}g^{*}(s)\left(\int_{s}^{1}\frac{\d\tvp(r)}{r\ell(r)} + \frac{\tvp(s)}{s\ell(s)}\right)\d s
          = \int_{0}^{1}g^{*}(s)\left(\int_{s}^{1}\frac{\d\tvp(r)}{r\ell(r)} + \frac{1}{\varphi(s)\ell(s)}\right)\d s
\end{align*}
for every $g\in\Mpl(0,1)$.
This amounts to saying that $\|\cdot\|_{\xs'(0,1)}=\|\cdot\|_{\Lambda_{\psi}(0,1)}$, where
\begin{align}\label{NOV1}
    \psi(s)
        & = \int_{0}^{s}\int_{r}^{1}\frac{\d\tvp(\varrho)}{\varrho\ell(\varrho)}\d r + \int_{0}^{s}\frac{\d r}{\varphi(r)\ell(r)}
            \\
        & = \int_{0}^{s}\int_{r}^{s}\frac{\d\tvp(\varrho)}{\varrho\ell(\varrho)}\d r + s\int_{s}^{1}\frac{\d\tvp(r)}{r\ell(r)} + \int_{0}^{s}\frac{\d r}{\varphi(r)\ell(r)}
            \\
        & = \int_{0}^{s}\frac{\d\tvp(r)}{\ell(r)} + s\int_{s}^{1}\frac{\d\tvp(r)}{r\ell(r)} + \int_{0}^{s}\frac{\d r}{\varphi(r)\ell(r)}
			\quad\text{for $s\in (0,1)$.}
\end{align}
 Let $0<s_1 < s_2 <1$. Then
\begin{align}\label{july10}
\int_{s_1}^{s_2} \d \tvp(r) = \tvp(s_2) - \tvp(s_1) = \frac{s_2}{\varphi (s_2)} - \frac{s_1}{\varphi (s_1)}\le  \frac{s_2}{\varphi (s_2)}  -  \frac{s_1}{\varphi (s_2)} \le \int_{s_1}^{s_2}  \frac{\d r}{\varphi(r)}.
\end{align}
Hence, the first integral on the rightmost side of equation~\eqref{NOV1} is bounded by the third one.
The conclusion hence follows via property~\eqref{dic30}.

\ref{en:optimal-M-domain}
By Theorem~\ref{C:OU-domain}, the optimal rearrangement domain space $Z(0,1)=\ys(0,1)$ associated with the  target space $Y(0,1)=M_\theta(0,1)$
obeys
\begin{equation*}
    \|g\|_{Z(0,1)} = \sup_{s\in(0,1)}\theta(s)\left(\sou g^{*}\right)^{**}(s)
\end{equation*}
for $g\in\Mpl(0,1)$.
Moreover, for every $g\in\Mpl(0,1)$ and $s\in(0,1)$,
\begin{align*}
	\left(\sou g^{*}\right)^{**}(s)
		&  = \frac{1}{s}\int_{0}^{s}\left(\frac{1}{r\ell(r)}\int_{0}^{r}g^*(\varrho)\,\d \varrho
				+ \int_{r}^{1}\frac{g^{*}(\varrho)}{\varrho\ell(\varrho)}\d \varrho\right)\d r
    	\\
		& = \frac{1}{s}\int_{0}^{s}\frac{g^{**}(r)}{\ell(r)}\d r
				+ \frac{1}{s}\int_{0}^{s}\int_{r}^{s} \frac{g^{*}(\varrho)}{\varrho\ell(\varrho)}\d\varrho\,\d r
				+ \int_{s}^{1}\frac{g^{*}(r)}{r\ell(r)}\d r
    	\\
		& = \frac{1}{s}\int_{0}^{s}\frac{g^{**}(r)}{\ell(r)}\d r
				+ \frac{1}{s}\int_{0}^{s}\frac{g^{*}(r)}{\ell(r)}\d r
				+ \int_{s}^{1}\frac{g^{*}(r)}{r\ell(r)}\d r
   		 \\
		& \approx \frac{1}{s}\int_{0}^{s}\frac{g^{**}(r)}{\ell(r)}\d r
				+ \int_{s}^{1}\frac{g^{*}(r)}{r\ell(r)}\d r,
\end{align*}
with absolute equivalence constants, inasmuch as, in the last but one line, the second integral is bounded by the first one.
Hence, the conclusion follows.
\end{proof}

\begin{theorem}[Optimal embeddings for Marcinkiewicz spaces -- examples]\label{T:optimal-embeddings-marcinkiewicz-examples}
One has
\begin{equation}\label{E:optimal-embedding-endpoints}
	\left\{
	\renewcommand{\arraystretch}{1.2}
	\begin{array}{@{}l@{{}\to{}}l@{\quad}l}
		\Wlgn L^{(1,\infty;0,\beta)}
			& L^{(1,\infty;0,\beta-1)}
				& \text{if $\beta>1$}
					\\
		\Wlgn L^{(1,\infty;\alpha,\beta)}
			& L^{(1,\infty;\alpha,\beta)}
				& \text{if $\alpha>0$ and $\beta\in\R$}
					\\
		\Wlgn L^{(p,\infty;\alpha,\beta)}
			& L^{(p,\infty;\alpha+1,\beta)}
				& \text{if $p\in(1,\infty)$ and $\alpha,\beta\in\R$}
					 \\
		\Wlgn\exp L^{\beta}
		 	& \exp L^{\beta}
				& \text{if $\beta>0$}
					 \\
		\Wlgn\exp\exp L^{\beta}
			& \exp\exp L^{\frac{\beta}{\beta+1}}
				& \text{if $\beta>0$}
				 \\
		\Wlgn L^{\infty}
			& \exp\exp L
				& \text{if $\beta>0$,}
	\end{array}
	\right.
\end{equation}
where all the spaces are over $\RG$.
Moreover, all target spaces and all domain spaces are optimal in \eqref{E:optimal-embedding-endpoints} among rearrangement-invariant spaces.
\end{theorem}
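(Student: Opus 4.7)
The plan is to obtain each of the six embeddings in~\eqref{E:optimal-embedding-endpoints} by identifying the domain as a Marcinkiewicz space $M_\varphi\RG$ and then invoking Theorem~\ref{T:optimal-embeddings-marcinkiewicz}. Reading off $\varphi_X(t)=\|\chi_E\|_X$ for $|E|=t$, and using the identification $M^A=M_{\varphi_A}$ together with~\eqref{E:conjugate-invers} applied to the Young functions modelled on $e^{t^\beta}$ and $e^{e^{t^\beta}}$, the fundamental functions of the six domains come out, respectively, as
\[
s\ell\ell(s)^\beta,\quad s\ell(s)^\alpha\ell\ell(s)^\beta,\quad s^{1/p}\ell(s)^\alpha\ell\ell(s)^\beta,\quad \ell(s)^{-1/\beta},\quad \ell\ell(s)^{-1/\beta},\quad 1,
\]
each of which is (equivalent near $s=0$ to) a quasiconcave function.

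For the target side, the first step in each case is to verify the integrability condition~\eqref{E:marcinkiewicz-condition-range}. A substitution $u=\ell\ell(s)$ reveals that case~1 forces exactly $\beta>1$, matching the stated range, whereas the remaining five cases are immediate. The auxiliary function $\psi$ defined in~\eqref{E:tilde-psi} is then computed via the elementary asymptotic identities
\[
\int_0^s\ell(r)^a\,dr\approx s\ell(s)^a,\qquad \int_s^1\frac{\ell(r)^{a-1}}{r}\,dr\approx\ell(s)^a,\qquad \int_s^1\frac{\ell\ell(r)^b}{r\ell(r)}\,dr\approx \ell\ell(s)^{b+1},
\]
valid for $s$ near $0$ and the relevant real parameters, so that in each case the dominant term of $\psi$ is readily identified and $\tilde\psi(s)=s/\psi(s)$ is seen to coincide, up to an equivalence constant, with the fundamental function of the claimed target in~\eqref{E:optimal-embedding-endpoints}. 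The embedding and its target-optimality then follow at once from Theorem~\ref{T:optimal-embeddings-marcinkiewicz}\ref{en:optimal-M-target}.

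For the optimality of the domain, each listed target is itself a Marcinkiewicz space $M_\theta\RG$ with an immediately readable $\theta$ that satisfies~\eqref{E:marcinkiewicz-condition-domain}. Applying Theorem~\ref{T:optimal-embeddings-marcinkiewicz}\ref{en:optimal-M-domain}, the optimal rearrangement-invariant domain has norm~\eqref{E:marcinkiewicz-optimal-domain}, and the remaining task is to prove this norm is equivalent to the Marcinkiewicz norm of the corresponding domain in~\eqref{E:optimal-embedding-endpoints}. The $\lesssim$ direction follows by inserting the pointwise bound $g^{**}(r)\lesssim\|g\|_{M_\varphi}/\varphi(r)$ into~\eqref{E:marcinkiewicz-optimal-domain} and applying the same asymptotics; the $\gtrsim$ direction is extracted at a fixed $s$ by restricting the second integral to a dyadic window $(s,2s)$ on which $g^*\approx g^*(s)$ and $1/(r\ell(r))\approx 1/(s\ell(s))$, paired with an analogous dyadic estimate for the first integral. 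The main obstacle will be precisely this case-by-case bookkeeping, and most notably the boundary case $L^\infty\to\exp\exp L$, where the trivially quasiconcave $\varphi\equiv 1$ requires careful handling; there, a direct computation nevertheless yields $\psi(s)\approx s\ell\ell(s)$, whence $\tilde\psi(s)\approx \ell\ell(s)^{-1}=\varphi_{\exp\exp L}(s)$, and~\eqref{E:marcinkiewicz-optimal-domain} with $\theta=\ell\ell^{-1}$ reduces, by the same asymptotics, to a quantity equivalent to the $L^\infty$ norm.
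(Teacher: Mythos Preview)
Your argument for the \emph{target} side is essentially the paper's own proof: identify each domain as $M_\varphi$, verify~\eqref{E:marcinkiewicz-condition-range}, compute $\psi$ from~\eqref{E:tilde-psi} via standard logarithmic asymptotics, and read off $M_{\tps}$. That part is fine.

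The gap is in the \emph{domain} optimality. Your proposed lower bound---restrict the integral $\int_s^1 g^*(r)/(r\ell(r))\,\d r$ to the dyadic window $(s,2s)$, and likewise for the first term---only produces
\[
\|g\|_{Z}\gtrsim \sup_{s\in(0,1)}\frac{\theta(s)}{\ell(s)}\,g^{**}(s),
\]
because $\int_s^{2s}\frac{\d r}{r\ell(r)}\approx \frac{1}{\ell(s)}$. For this to dominate $\|g\|_{M_\varphi}=\sup_s\varphi(s)g^{**}(s)$ you would need $\theta(s)\gtrsim\varphi(s)\ell(s)$, and that fails in every case except the third ($p\in(1,\infty)$): in the first, second, fourth, fifth and sixth embeddings the functions $\varphi$ and $\theta$ differ by at most a power of $\ell\ell$ (or are equal), so you lose a full logarithm. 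Testing on $g=\chi_{(0,a)}$ makes this concrete: for the $L^\infty\to\exp\exp L$ case your bound gives $\sup_s\frac{g^{**}(s)}{\ell\ell(s)\ell(s)}$, which tends to $0$ as $a\to0$, not to $1=\varphi_{L^\infty}(a)$.

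The paper avoids this by not attempting full norm equivalence. Since $M_\varphi\to Z$ is automatic from the optimality of $Z$, it suffices to prove $Z\to M_\varphi$, and for that it is enough to compare \emph{fundamental functions}: $\varphi_Z\gtrsim\varphi$ implies $Z\to M_{\varphi_Z}\to M_\varphi$ via~\eqref{E:fundamental-endpoint-embeddings}. Computing $\varphi_Z(a)=\|\chi_{(0,a)}\|_Z$ from~\eqref{E:marcinkiewicz-optimal-domain} yields
\[
\varphi_Z(a)\gtrsim\max\Bigl\{\sup_{s\in(0,a)}\tfrac{\theta(s)}{\ell(s)},\ a\sup_{s\in(a,1)}\tfrac{\theta(s)}{s}\bigl[\ell\ell(a)-\ell\ell(s)\bigr]\Bigr\},
\]
and the second supremum is then evaluated at a \emph{non-dyadic} point $s$ chosen on the double-logarithmic scale---specifically $s=\eta(a)$ with $\ell\ell(\eta(a))=\tfrac12\ell\ell(a)$, or $s=\sigma(a)$ with $\ell\ell(\sigma(a))=\ell\ell(a)-1$---so that the bracket is of order $\ell\ell(a)$ or of order $1$ rather than the dyadic $1/\ell(a)$. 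This recovers exactly the missing factor and closes the argument in each case.
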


\begin{proof}
We begin by showing the optimality of target spaces in \eqref{E:optimal-embedding-endpoints} via formula~\eqref{E:tilde-psi} from Theorem~\ref{T:optimal-embeddings-marcinkiewicz}.

First, when $\beta>1$, we have that $L^{(1,\infty;0,\beta)}=M_{\varphi}$ with $\varphi(s)=s\ell\ell(s)^\beta$ for $s\in(0,1)$.
Since $\tvp(s)=\ell\ell(s)^{-\beta}$ for $s\in(0,1)$,
\begin{equation*}
	\d\tvp(s) \approx s^{-1}\ell(s)^{-1}\ell\ell(s)^{-\beta-1}\d s
		\quad\text{for $s\in(0,1)$.}
\end{equation*}
Hence,
\begin{equation*}
	\int_{0}^{1}\ell\ell(s)\d\tvp(s)
		\approx \int_0^1 \frac{\d s}{s\ell(s)\ell\ell(s)^{\beta}}
		< \infty.
\end{equation*}
Thus, condition~\eqref{E:marcinkiewicz-condition-range} is satisfied.
The function $\psi$ given by~\eqref{E:tilde-psi} obeys
\begin{align*}
	\psi(s)
		& \approx \int_0^s \frac{\d r}{r\ell(r)\ell\ell(r)^{\beta}}
			+ s \int_{s}^{1} \frac{\d r}{r^{2}\ell(r)^{2}\ell\ell(r)^{\beta+1}}
    \approx \ell\ell(s)^{1-\beta}+\ell(s)^{-2}\ell\ell(s)^{-\beta-1}
		\approx \ell\ell(s)^{1-\beta}
\end{align*}
for $s\in(0,1)$.
Thus, $\tps(s)\approx s\ell\ell(s)^{\beta-1}$ for $s\in(0,1)$, whence $M_{\tps}=L^{(1,\infty;0,\beta-1)}$, and, by Theorem~\ref{T:optimal-embeddings-marcinkiewicz}~\ref{en:optimal-M-target}, the first embedding in~\eqref{E:optimal-embedding-endpoints} holds and its target space is optimal among all rearrangement-invariant spaces.

If $\alpha>0$ and $\beta\in\R$, then $L^{(1,\infty;\alpha,\beta)}=M_{\varphi}$,
where $\varphi(s)=s\ell(s)^{\alpha}\ell\ell(s)^\beta$ for $s\in(0,1)$.
One has that $\tvp(s)=\ell(s)^{-\alpha}\ell\ell(s)^{-\beta}$ and $\d\tvp(s)\approx s^{-1}\ell(s)^{-\alpha-1}\ell\ell(s)^{-\beta}\d s$ for $s\in(0,1)$.
Hence
\begin{equation*}
	\int_{0}^{1}\ell\ell(s)\d\tvp(s)
		\approx \int_0^1 \frac{\ell\ell(s)^{1-\beta}}{s\ell(s)^{\alpha+1}}\d s
		< \infty.
\end{equation*}
Thus, condition~\eqref{E:marcinkiewicz-condition-range} is satisfied.
The function $\psi$ from~\eqref{E:tilde-psi} satisfies
\begin{align*}
	\psi(s)
		& \approx \int_0^s r^{-1}\ell(r)^{-\alpha-1}\ell\ell(r)^{-\beta}\d r
			+ s\int_{s}^{1}r^{-2}\ell(r)^{-\alpha-2}\ell\ell(r)^{-\beta}\d r
        \\
    & \approx \ell(s)^{-\alpha}\ell\ell(s)^{-\beta}
			+ \ell(s)^{-\alpha-2}\ell\ell(s)^{-\beta}
			\approx \ell(s)^{-\alpha}\ell\ell(s)^{-\beta}
\end{align*}
for $s\in(0,1)$.
Thereby, $\tps(s)\approx \varphi(s)$ for $s\in(0,1)$, whence $M_{\tps}=L^{(1,\infty;\alpha,\beta)}$.

Next, assume that $p\in(1,\infty)$ and $\alpha,\beta\in\R$.
Then $L^{(p,\infty;\alpha,\beta)}=M_{\varphi}$, where $\varphi(s)=s^{\ip}\ell(s)^{\alpha}\ell\ell(s)^\beta$ for $s\in(0,1)$.
We have that $\tvp(s)=s^{1-\ip}\ell(s)^{-\alpha}\ell\ell(s)^{-\beta}$ and $\d\tvp(s)\approx s^{-\ip}\ell(s)^{-\alpha}\ell\ell(s)^{-\beta}\d s$ for $s\in(0,1)$.
Consequently,
\begin{equation*}
    \int_{0}^{1}\ell\ell(s)\d\tvp(s) \approx \int_0^1\ell\ell(s)^{1-\beta}\ell(s)^{-\alpha}s^{-\ip}\d s <\infty.
\end{equation*}
Therefore, condition~\eqref{E:marcinkiewicz-condition-range} is satisfied.
The function $\psi$ given by~\eqref{E:tilde-psi} fulfills
\begin{align*}
	\psi(s)
		& \approx \int_0^s \frac{\d r}{r^{\ip}\ell(r)^{\alpha+1}\ell\ell(r)^{\beta}}
			+ s\int_{s}^{1} \frac{\d r}{r^{\ip+1}\ell(r)^{\alpha+1}\ell\ell(r)^{\beta}}
      \approx s^{1-\ip}\ell(s)^{-\alpha-1}\ell\ell(s)^{-\beta}
\end{align*}
for $s\in(0,1)$.
Thus, $\tps(s)\approx s^{\ip}\ell(s)^{\alpha+1}\ell\ell(s)^{\beta}$ for $s\in(0,1)$, whence $M_{\tps}=L^{(p,\infty;\alpha+1,\beta)}$.

If  $\beta>0$, then $\exp L^{\beta}=M_\varphi$, where
$\varphi(s)=\ell(s)^{-\frac{1}{\beta}}$ for $s\in(0,1)$.
Therefore, $\tvp(s)=s\ell(s)^{\frac{1}{\beta}}$  and $\d\tvp(s) \approx \ell(s)^{\frac{1}{\beta}}\d s$ for $s\in(0,1)$.  Hence,
\begin{equation*}
    \int_{0}^{1}\ell\ell(s)\d\tvp(s) \approx \int_0^1\ell\ell(s)\ell(s)^{\frac{1}{\beta}}\d s <\infty.
\end{equation*}
Condition~\eqref{E:marcinkiewicz-condition-range} is satisfied, and the function $\psi$ from~\eqref{E:tilde-psi} obeys
\begin{align*}
	\psi(s)
		& \approx \int_0^s \ell(r)^{\frac{1}{\beta}-1}\d r
				+ s\int_{s}^{1}r^{-1}\ell(r)^{\frac{1}{\beta}-1}\d r
      \approx s\ell(s)^{\frac{1}{\beta}-1} + s\ell(s)^{\frac{1}{\beta}}
			\approx  s\ell(s)^{\frac{1}{\beta}}
\end{align*}
for $s\in(0,1)$.
Therefore, $\tps(s)\approx \varphi(s)$ for $s\in(0,1)$, whence $M_{\tps}=\exp L^{\beta}$.

If $\beta>0$, then $\exp\exp L^{\beta}=M_\varphi$, where $\varphi(s)=\ell\ell(s)^{-\frac{1}{\beta}}$ for $s\in(0,1)$.
We have that $\tvp(s)=s\ell\ell(s)^{\frac{1}{\beta}}$ and $\d\tvp(s) \approx \ell\ell(s)^{\frac{1}{\beta}}\d s$ for $s\in(0,1)$.
Condition~\eqref{E:marcinkiewicz-condition-range} is fulfilled, since
\begin{equation*}
	\int_{0}^{1}\ell\ell(s)\,\d\tvp(s)
		\approx \int_0^1\ell\ell(s)^{1+\frac{1}{\beta}}\,\d s
		< \infty.
\end{equation*}
The function $\psi$ given by~\eqref{E:tilde-psi} satisfies
\begin{align*}
	\psi(s)
		& \approx \int_0^s \frac{\ell\ell(r)^{\frac{1}{\beta}}}{\ell(r)}\,\d r
				+ s\int_{s}^{1} \frac{\ell\ell(r)^{\frac{1}{\beta}}}{r\ell(r)}\,\d r
      \approx s\ell(s)^{-1}\ell\ell(s)^{\frac{1}{\beta}}
				+ s\ell\ell(s)^{\frac{1}{\beta}+1}
			\approx  s\ell\ell(s)^{\frac{1}{\beta}+1}
\end{align*}
 for $s\in(0,1)$.
Thus, $\tps(s)\approx \ell\ell(s)^{-\frac{\beta+1}{\beta}}$ for $s\in(0,1)$, whence $M_{\tps}=\exp\exp L^{\frac{\beta}{\beta+1}}$.

As for the last embedding, we have that $L^\infty=M_\varphi$, where $\varphi(s)=1$ for $s\in(0,1)$.
Then $\tvp(s)=s$ and $\d\tvp(s) = \d s$ for $s\in(0,1)$, and
\begin{equation*}
	\int_{0}^{1}\ell\ell(s)\d\tvp(s)
		= \int_0^1\ell\ell(s)\d s
		< \infty.
\end{equation*}
Thus, condition~\eqref{E:marcinkiewicz-condition-range} is satisfied, and
\begin{align*}
	\psi(s)
		& \approx \int_0^s \frac{\d r}{\ell(r)}
				+ s\int_{s}^{1} \frac{\d r}{r\ell(r)}
      \approx s\ell(s)^{-1}+s\ell\ell(s)
			\approx  s\ell\ell(s)
\end{align*}
 for $s\in(0,1)$.
Consequently, $\tps(s)\approx \ell\ell(s)$ for $s\in(0,1)$, whence $M_{\tps}=\exp\exp L$.

We have thus shown that the embeddings in \eqref{E:optimal-embedding-endpoints} hold and that the target spaces are optimal.
It remains to prove the optimality of the domain spaces By Theorem~\ref{T:optimal-embeddings-marcinkiewicz}~\ref{en:optimal-M-target}, given a domain space of Marcinkiewicz type $M_\varphi$, its optimal rearrangement-invariant target space is also a Marcinkiewicz space $M_\theta$.
Hence,
\begin{equation} \label{prd}
    \Wlgn M_\varphi \to M_\theta.
\end{equation}
Now, thanks to Theorem~\ref{T:optimal-embeddings-marcinkiewicz}~\ref{en:optimal-M-domain}, there exists an optimal rearrangement-invariant domain space $X$ for the target $M_\theta$.
By the optimality of $X$, we have that $M_\varphi\to X$.
Our goal will be to show that the converse embedding $X\to M_\varphi$ holds as well.
To this end, it suffices to prove the inequality between their fundamental functions $\varphi_X\gtrsim\varphi$.
Indeed, then one has
\begin{equation*}
	X\to M_{\varphi_X}\to M_\varphi,
\end{equation*}
where the first embedding holds owing to~\eqref{E:fundamental-endpoint-embeddings}, whereas the second follows immediately from the definition of the Marcinkiewicz functional.
With formula~\eqref{E:marcinkiewicz-optimal-domain} at hand, we have that
\begin{align*}
	\varphi_X(a)
		& \gtrsim \sup_{s\in(0,1)} \frac{\theta(s)}{s} \int_{0}^s \frac{\chi_{(0,a)}^{**}(r)}{\ell(r)}\d r
			= \sup_{s\in(0,1)} \frac{\theta(s)}{s}
				\biggl(
					\int_0^s \frac{\chi_{(0,a)}(r)}{\ell(r)}\d r
				+	a\int_{a}^s \frac{\chi_{(a,1)}(r)}{r\ell(r)}\d r
				\biggr)
			\\
		& \approx \max
			\biggl\{
			\sup_{s\in(0,a)} \frac{\theta(s)}{\ell(s)},
		  a \sup_{s\in(a,1)} \frac{\theta(s)}{s} \bigl[\ell\ell(a) - \ell\ell(s)\bigr]
			\biggr\}
\end{align*}
for $a\in(0,1)$.
Consequently, the domain space in \eqref{prd} is optimal if
\begin{equation} \label{E:cond-opt-ms}
	\sup_{s\in(0,a)} \frac{\theta(s)}{\ell(s)}
		\gtrsim \varphi(a)
	\quad\text{or}\quad
	a\sup_{s\in(a,1)} \frac{\theta(s)}{s} \bigl[\ell\ell(a) - \ell\ell(s)\bigr]
		\gtrsim \varphi(a)
\end{equation}
for every $a\in(0,1)$.
The remaining part of this proof is devoted to showing that equation~\eqref{E:cond-opt-ms} is fulfilled for each embedding in \eqref{E:optimal-embedding-endpoints}.
Note also that condition~\eqref{E:marcinkiewicz-condition-domain} is satisfied for each of them.

Let $\beta>1$ and $\theta(s)=s\ell\ell(s)^{\beta-1}$ for $s\in(0,1)$.
Fix $a\in(0,e^{1-e})$ and let
 $\eta$ be the function defined by~\eqref{E:eta}. Then,
\begin{align*}
	a \sup_{s\in(a,1)} \frac{\theta(s)}{s} \bigl[\ell\ell(a) - \ell\ell(s)\bigr]
		\ge a\ell\ell\bigl(\eta(a)\bigr)^{\beta-1}\bigl[\ell\ell(a)-\ell\ell(\eta(a))\bigr]
		= 2^{-\beta}a\ell\ell(a)^{\beta}.
\end{align*}
Since $\varphi_{L^{(1,\infty;0,\beta)}}(a)\approx a\ell\ell(a)^{\beta}$ for $a\in(0,e^{1-e})$, the second inequality in~\eqref{E:cond-opt-ms} follows.
Altogether, this proves that the domain space in the first embedding in~\eqref{E:optimal-embedding-endpoints} is optimal.

As for the second embedding, let $\alpha>0$, $\beta\in\R$, and $\theta(s)=s\ell(s)^{\alpha}\ell\ell(s)^{\beta}$ for $s\in(0,1)$.
Let $\sigma$ be the function defined by~\eqref{E:sigma}. Then,  for every $a\in(0,e^{1-e})$,
\begin{align*}
	a \sup_{s\in(a,1)} \frac{\theta(s)}{s} \bigl[\ell\ell(a) - \ell\ell(s)\bigr]
		\ge a\ell\bigl(\sigma(a)\bigr)^\alpha\ell\ell\bigl(\sigma(a)\bigr)^{\beta}\bigl[\ell\ell(a) - \ell\ell(\sigma(a))\bigr]
		\approx a\ell(a)^{\alpha}\ell\ell(a)^{\beta}.
\end{align*}
This shows that the second embedding in~\eqref{E:optimal-embedding-endpoints} has an optimal domain.

Concerning the third embedding, let $p\in(1,\infty)$, $\alpha,\beta\in\R$, and
$\theta(s)=s^{\ip}\ell(s)^{\alpha+1}\ell\ell(s)^{\beta}$ for $s\in(0,1)$.
Hence, for every $a\in(0,1)$,
\begin{align*}
	\sup_{s\in(0,a)} \frac{\theta(s)}{\ell(s)}
		\ge a^\ip\ell(a)^\alpha\ell\ell(a)^\beta
		= \varphi_{L^{(p,\infty;\alpha,\beta)}}(a).
\end{align*}
Hence, the optimality of the domain in the third embedding in~\eqref{E:optimal-embedding-endpoints} follows.

Let us focus on the fourth embedding.
Assume that $\beta>0$, and let $\theta(s)=\ell(s)^{-\beta}$ for $s\in(0,1)$.
If $\sigma$ is again the function given by equation \eqref{E:sigma}, then we have that
\begin{align*}
	a \sup_{s\in(a,1)} \frac{\theta(s)}{s} \bigl[\ell\ell(a) - \ell\ell(s)\bigr]
		\gtrsim \frac{a}{\sigma(a)} \ell\bigl(\sigma(a)\bigr)^{-\beta} \bigl[\ell\ell(a) - \ell\ell(\sigma(a))\bigr]
		\approx \ell(a)^{-\beta}
		= \varphi_{\exp L^{\beta}}(a)
\end{align*}
for $a\in(0,e^{1-e})$, whence we deduce that the fourth embedding in~\eqref{E:optimal-embedding-endpoints} has an optimal domain.

If $\beta>0$ and $\theta(s)=\ell\ell(s)^{-1-\frac{1}{\beta}}$ for $s\in(0,1)$, then the optimality of the fifth embedding follows from the fact that
\begin{align*}
	a \sup_{s\in(a,1)} \frac{\theta(s)}{s} \bigl[\ell\ell(a) - \ell\ell(s)\bigr]
		\gtrsim \frac{a}{\eta(a)} \ell\ell\bigl(\eta(a)\bigr)^{-1-\beta} \bigl[\ell\ell(a) - \ell\ell(\eta(a))\bigr]
		\approx \ell\ell(a)^{-\ib}
		= \varphi_{\exp\exp L^{\beta}}(a)
\end{align*}
for $a\in(0,e^{1-e})$, where $\eta$ is the function given by \eqref{E:eta}.

Finally, in the last embedding we have  that $\theta(s)=\ell\ell(s)^{-1}$ for $s\in(0,1)$, whence
\begin{align*}
	a \sup_{s\in(a,1)} \frac{\theta(s)}{s} \bigl[\ell\ell(a) - \ell\ell(s)\bigr]
		\gtrsim \frac{a}{\eta(a)} \ell\ell\bigl(\eta(a)\bigr)^{-1} \bigl[\ell\ell(a) - \ell\ell(\eta(a))\bigr]
		\approx 1
		= \varphi_{L^\infty}(a)
\end{align*}
for $a\in(0,e^{1-e})$. This implies that the domain in the last embedding in~\eqref{E:optimal-embedding-endpoints} is optimal.
\end{proof}

\section*{Acknowledgement}

This research was partly funded by:

\begin{enumerate}
\item Research Project 2201758MTR2  of the Italian Ministry of University and
Research (MIUR) Prin 2017 ``Direct and inverse problems for partial differential equations: theoretical aspects and applications'';
\item GNAMPA of the Italian INdAM -- National Institute of High Mathematics
(grant number not available);
\item Operational Programme Research, Development and Education, Project Postdoc2MUNI no.\ CZ.02.2.69/0.0/0.0/18\_053/0016952;
\item Grants P201-18-00580S and P201-21-01976S of the Czech Science Foundation.
\end{enumerate}

\bibliographystyle{abbrvnat}

\end{document}